\documentclass[a4paper, final, reqno]{amsart}

\usepackage[x11names]{xcolor}

\usepackage[fontsize=10pt]{scrextend}

\usepackage{etoolbox}

\patchcmd{\section}{\scshape}{\scshape\bfseries}{}{}
\makeatletter
\renewcommand{\@secnumfont}{\scshape\bfseries}
\makeatother

\patchcmd{\part}{\bfseries}{\scshape\bfseries}{}{}

\DeclareRobustCommand{\SkipTocEntry}[5]{}

\usepackage[margin=2.5cm]{geometry} 

\usepackage{amsmath, bm, mathtools}
\usepackage{amsthm}
\usepackage{amssymb}

\numberwithin{equation}{section}

\usepackage[only,llbracket,rrbracket]{stmaryrd}

\usepackage[makeroom]{cancel}

\usepackage{enumitem}

\usepackage{tikz-cd}

\usepackage[final]{hyperref}
\hypersetup{
	unicode,
	colorlinks=true,
	linkcolor=blue,
	citecolor=Green4, 
	filecolor=magenta,      
	urlcolor=blue,
	pdfauthor={Author One, Author Two, Author Three}, 
	pdfkeywords={article, template, simple},
	pdfproducer={LaTeX}
}

\theoremstyle{plain}
\newtheorem{theorem}{Theorem}[section]
\newtheorem{corollary}[theorem]{Corollary}
\newtheorem{lemma}[theorem]{Lemma}

\newtheorem{proposition}[theorem]{Proposition}

\theoremstyle{definition}

\newtheorem{remark}[theorem]{Remark}

\usepackage{graphicx, color}
\graphicspath{{fig/}}

\usepackage{mathrsfs} 
\usepackage{xfrac}

\usepackage{lipsum}

\usepackage{scalerel}
\newlength\bshft
\def\fakebold#1{\ThisStyle{\ooalign{$\SavedStyle#1$\cr%
			\kern-\bshft$\SavedStyle#1$\cr%
			\kern\bshft$\SavedStyle#1$}}}

\title{On a visco-elastic Mullins-Sekerka System}

\author{Helmut Abels} 
\address{Fakultät für Mathematik, Universität Regensburg, 93040 Regensburg, Germany}
\email{\href{mailto:helmut.abels@ur.de}{helmut.abels@ur.de}}

\author{Harald Garcke} 
\address{Fakultät für Mathematik, Universität Regensburg, 93040 Regensburg, Germany}
\email{\href{mailto:harald.garcke@ur.de}{harald.garcke@ur.de}}

\author{Jonas Haselböck} 
\address{Fakultät für Mathematik, Universität Regensburg, 93040 Regensburg, Germany}
\email{\href{mailto:jonas.haselboeck@ur.de}{jonas.haselboeck@ur.de}}

\date{}

\newcommand\numberthis{\addtocounter{equation}{1}\tag{\theequation}}
\newcommand{\norm}[1]{ \left\|   #1 \right\| }

\newcommand{\abs}[1]{|#1|}

\newcommand{\N}{\mathbb{N}}

\newcommand{\vphi}{\varphi}
\newcommand{\eps}{\varepsilon}

\newcommand{\Tau}{\mathcal{T}}
\newcommand{\bnu}{\bm{\nu}}
\newcommand{\pt}{\partial_t}

\newcommand{\Wp}{W_{,\vphi}}
\newcommand{\E}{\mathcal{E}}
\newcommand{\A}{\mathcal{A}}

\newcommand{\WE}{W_{,\E}}
\newcommand{\F}{\mathcal{F}}

\newcommand{\R}{\mathbb{R}}
\newcommand{\bu}{\bm{u}}

\newcommand{\dx}{\, d\mathbf{x}}
\newcommand{\dy}{\, d\mathbf{y}}

\newcommand{\dt}{\, dt}

\newcommand{\ddt}{\frac{d}{dt}}

\newcommand{\dH}{\, d\mathcal{H}^{d-1}}

\newcommand{\C}{\fakebold{\mathbb{C}}}

\DeclareMathOperator\supp{supp}

\newcommand{\states}{\mathcal{M}}
\newcommand{\indic}{\mathcal{M}_{m_0}}
\newcommand{\disp}{\bm{X}}

\newcommand{\pO}{\partial \Omega}
\newcommand{\po}{\partial \Omega}
\renewcommand{\S}{ \mathbb{S}^{d-1}}

\newcommand{\jump}[1]{ \llbracket #1\rrbracket}
\newcommand{\bjump}[1]{ \big\llbracket #1\big\rrbracket}
\newcommand{\Bjump}[1]{ \Big\llbracket #1\Big\rrbracket}

\renewcommand{\div}{\textnormal{div}}

\def\Xint#1{\mathchoice
	{\XXint\displaystyle\textstyle{#1}}%
	{\XXint\textstyle\scriptstyle{#1}}%
	{\XXint\scriptstyle\scriptscriptstyle{#1}}%
	{\XXint\scriptscriptstyle\scriptscriptstyle{#1}}%
	\!\int}
\def\XXint#1#2#3{{\setbox0=\hbox{$#1{#2#3}{\int}$ }
		\vcenter{\hbox{$#2#3$ }}\kern-.6\wd0}}

\def\dashint{\Xint-}

\begin{document}
	
	\begin{abstract}
		We introduce a novel visco-elastic Mullins--Sekerka system with a prescribed constant contact angle at the boundary. The system is derived as an $H^{-1}$-$\bm{H}^1$-type gradient flow of an energy consisting of the perimeter together with capillary, elastic, and second-gradient contributions. Building on the framework of Hensel and Stinson (Arch.\ Ration.\ Mech.\ Anal.\ 248, 2024), we introduce a measure-valued solution concept featuring a sharp De Giorgi-type energy-dissipation inequality. Moreover, we establish existence of solutions via an implicit time discretization scheme, and prove existence of $BV$ solutions under an energy-conservation hypothesis.
	\end{abstract}
	
\maketitle	
\setcounter{tocdepth}{1}

\noindent
\textbf{Key words:} Mullins-Sekerka, gradient flows, visco-elasticity, De Giorgi inequality, varifold solutions \\ 
\textbf{AMS-Classification:} 49Q20, 35D30, 53E10, 35R35, 74B10


\section{Introduction} \label{sec:derivation}

The evolution of interfaces in multiphase systems has long been a central topic in materials science, fluid mechanics and the analysis of nonlinear partial differential equations, with many real world applications ranging from solidification in binary alloys, and two-phase flows, to crystal growth. In recent years, mathematical biology has discovered these models as effective tools to describe tumor growth, prompting a rapidly growing number of publications. Among these models, the Cahn--Hilliard--Biot system, introduced by Strovik et al.\ \cite{STORVIK2022107799}, was the first to include both mechanical stress and fluid flow in a heterogeneous, saturated porous
medium. The addition of source terms to the underlying gradient flow structure further allows incorporating tumor growth (or even growth inhibiting effects) to an otherwise mass-conserving system. In subsequent analytical studies \cite{riethmüller2023wellposedness, MR4819610, haselboeck2024existence, haselboeck2026local}, the model was further augmented to also include visco-elasticity of Kelvin--Voigt type; for numerical results, see \cite{storvik2024sequential, MR4819610, brunk2024structurepreservingapproximationcahnhilliardbiot, riethmüller2026convergenceefficiencysplittingschemes}. The fact that the Cahn--Hilliard--Biot system is a diffuse interface model naturally raises the question of its sharp interface limit. While there are some works that have studied the sharp interface limit of the Cahn--Larché model \cite{garcke_00, garcke_2003, kwak} (which combines Cahn--Hilliard and linear elasticity \cite{larcht1982effect}), there are, to the best of our knowledge, no results on a visco-elastic extension.\\ 
As for sharp-interface models, Mullins and Sekerka introduced the system now bearing their names in the early 60s \cite{mullins1963morphological} in the context of material science and the stability properties of coarsening behavior in solidification processes. The fact that the corresponding equations can be derived as a gradient flow of the perimeter, highlighting its mass preserving, curvature driven evolution, became increasingly important for analytic treatments. In the 90s, Luckhaus and Sturzenhecker \cite{sturzenhecker} introduced a new concept for weak solutions utilizing $BV$-functions, where the interface corresponds to the common boundary of two sets, which are associated with a binary characteristic function $\chi$. Leveraging the $H^{-1}$-type gradient flow structure and a minimizing movement scheme, they were able to construct functions that satisfy both the evolution equation and the Gibbs-Thomson law in a distributional form. Their proof, however, crucially relies on the so-called \textit{energy conservation hypothesis}, meaning that they need to assume continuity of the perimeter functional with respect to weak-{*} convergence in $BV$. Relying on results from geometric measure theory on the properties of varifolds due to Schätzle \cite{schaetzle_01}, and using a generalized notion of mean curvature intrinsic to the interface that allows for a pointwise interpretation of the Gibbs--Thomson law \cite{roeger_interfaces_free}, Röger was later able to remove the energy conservation requirement \cite{roeger, abels_roeger_09}. \\ 
Further results on weak solutions include \cite {Bronsard_Garcke, laux_chambolle}, whereas analytic studies pertaining to the existence of classical solutions can be found in \cite{bogdan24, PrussSimonett2016, MR1421209, MR1607952, MR4322615}. Moreover, an elastic extension of this system was studied in \cite{MR3434747}. \\ 
Recently, Hensel and Stinson \cite{hensel_stinson} proposed a novel solution concept for the Mullins--Sekerka system, once again relying on the space gradient flow structure, that allows for measure valued solutions (varifolds) under certain compatibility and regularity requirements. Additionally, they are able to include intersections of the interface with the boundary and impose arbitrary (but constant) contact angles $\alpha \in (0, \tfrac{\pi}{2}]$ in the weak formulation. Another integral part of their concept is the characterization of the evolution through a single sharp energy dissipation inequality à la De Giorgi, which is only natural in light of the gradient flow structure, but also an important feature considering its crucial significance in a  recent proof \cite{fischer2024weakstronguniquenessprinciplemullinssekerka} of weak-strong uniqueness. \\ 
As for the connection to diffuse-interface models, Alikakos, Bates and Chen \cite{alikakos1994convergence} first used rigorous matched asymptotic expansions on the solutions to the Cahn--Hilliard system to find the singular limit. 
Chen \cite{chen1996global} was able to identify the sharp-interface limit of the Cahn--Hilliard equation as the Mullins--Sekerka system using the fairly weak concept of general varifolds. His approach was later replicated in similar situations, see e.g.\ \cite{abels_roeger_09}, but most importantly for our purposes, it was applied in \cite{kwak} to the Cahn--Larché system. Moreover, we refer to \cite{garcke_00} for results on the $\Gamma$-limit.\\ 
A recent study of the sharp interface limit of the purely elastic Cahn--Hilliard--Biot system using formal matched asymptotic expansions can be found in \cite{MR4873132}.
\par 
\medskip 
Since a thorough understanding of the sharp interface limit of the visco-elastic Cahn--Larché system is a prerequisite for the analysis of more involved models such as the Cahn--Hilliard--Biot equations, we restrict ourselves to this case in the following.\\
Before examining the strong formulation of our proposed system in detail, let us introduce the problem in precise mathematical terms and fix some notation. \\
Suppose that $\Omega \subset \R^d$, $d = 2, 3$, is a domain with smooth boundary $\partial \Omega$, enclosing a mixture of two visco-elastic solids (with distinct properties) that evolve over time. 
We can associate these two phases with the subdomains $\Omega^+(t)$ and $\Omega^-(t)$, respectively, denoting by $\Gamma(t)$ the interface in between them. If $\Gamma(t)$ intersects the boundary $\partial \Omega$, we require the angle between these hypersurfaces to equal $\alpha > 0$.\\ 
Observe that to fully capture the evolution of $\Gamma(t)$, it is sufficient to only consider a single phase, say $\Omega^+(t)$, since $\Omega^-(t)$ is already uniquely determined through $\Omega \setminus \overline{\Omega^+(t)}$. Hence, let us consider a finite time horizon $T_* > 0$ and a family $\mathcal{A} = (\mathcal{A}(t))_{t \in [0, T_*]}$ of evolving open sets in $\Omega$. Then, the interface between the two solids corresponds to the part of $\partial \mathcal{A}$ that is in $\Omega$, i.e, $\Gamma(t) = \partial \mathcal{A}(t) \cap \Omega$. Moreover, we denote by  $\mathcal{V} = \mathcal{V}(t, \bm{x})$, $\varkappa = \varkappa(t, \bm{x})$, $(t, \bm{x}) \in [0, T_*] \times \Gamma(t)$, the normal velocity and mean curvature of $\Gamma(t)$. Lastly, let $\bnu = \bnu(t, \bm{x})$ be the interior normal vector on $\Gamma(t)$, pointing towards $\A(t)$, and $\bm{\tau}_{\Gamma}$, $\bm{\tau}_{\po}$ the conormal vectors of $\partial \mathcal{A}$ with respect to $\Gamma(t)$ and $\po$, respectively (for more details see Section \ref{sec:gradient_flow}). \\ 
For a function $\bm{v} : \Omega \to \R^n$, we define the jump $\jump{\bm{v}}$ across $\Gamma(t)$ as the difference between the traces from both sides, i.e, 
\begin{equation*}
	\jump{\bm{v}} \coloneqq \textnormal{Tr}_{\A(t)} (\bm{v}) - \textnormal{Tr}_{\Omega \setminus \A(t)} (\bm{v}). 
\end{equation*}
\\ 
As usual, we describe the elastic deformations in the material by a vector field $\bm{u} : [0, T_*] \times \Omega \to \R^d$ and typically assume that the stress is given by $\C^\pm \big( \E(\bm{u}) -  \Tau^\pm \big)$, where $\C^\pm, \Tau^\pm$ are the constant elasticity tensor and eigenstress, respectively, associated with the material occupying $\Omega^\pm$.\\ 
Finally, let us associate $(\A(t))_{t\in[0, T_*]}$ with a family of characteristic functions $\chi \colon [0, T_*] \times \Omega \to \{0, 1\}$ such that $\chi(t) \equiv 1$ in $\A(t)$ and zero otherwise. Then, we may assume $\C, \C_{\nu}, \Tau$ to be functions of $\chi$ such that $\C^+ = \C(1)$, $\C^- = \C(0)$, with analogous conditions for the remaining maps, where $\C_\nu$ is the tensor of visco-elasticity. As usual, we assume that $\C$ (respectively $\C_\nu$) defines an endomorphism of $\R^{d \times d}_{sym}$, and is uniformly positive on symmetric matrices, i.e., for all $y \in \R$, $\E' \in \R^{d \times d}_{sym}$, it holds that 
\begin{align*}
	\C(y) \E' \in \R^{d \times d}_{sym}, \quad \textnormal{and} \quad \C(y) \E' \colon \E' \geq c_{\C} \norm{\E'}^2
\end{align*}
for some $c_{\C} > 0$. 
The elastic energy density is then given by $W(\chi, \E(\bm{u})) + \tfrac{1}{2} \abs{\nabla^2 \bm{u}}^2$, cf.\ \eqref{eq:energy_ms_regularized}, where, as it is typical for linear elasticity, we suppose $W$ to be a function of $\chi$ and $\E(\bm{u})$ with 
\begin{equation*}
	W(\chi, \E') \coloneqq \frac{1}{2} \C(\chi) \big( \E' - \Tau(\chi) \big) \colon \big( \E' - \Tau(\chi)\big). 
\end{equation*}\\ 
In the process of deriving a visco-elastic Mullins--Sekerka system, both as a gradient flow and also using formally matched asymptotics, we encountered difficulties concerning the jump condition of $\pt \bm{u}$, and related to that also of $\jump{\nabla \bm{u}}\bnu$. It turns out that this is due to a structural problem in the dynamics. 
Indeed, in order to prevent cracks or self intersections of the material, we have to assume that the displacement is continuous across the interface, i.e., 
\begin{equation*}
	\bm{0} = \jump{\bm{u}(t, \bm{x})} \quad \textnormal{for all} \quad \bm{x} \in \Gamma(t).
\end{equation*}
Suppose that $\Gamma(t)$ is an evolving interface, and consider a curve $ t \mapsto \bm{x}(t) \in \Gamma(t)$ such that $\tfrac{d}{dt} \bm{x}(t)$ is normal to $\Gamma(t)$. Then, we can examine the time derivative of $\jump{\bm{u}(t, \bm{x}(t))}$ along this trajectory at a fixed time $t_0$: 
\begin{equation}\label{eq:time_deriative_interface}
	\bm{0} = \ddt_{| t = t_0}  \jump{\bm{u}(t, \bm{x}(t))} 
	= \ddt_{| t = t_0} \textnormal{Tr}_{\Omega^+(t)} \bm{u}(t, \bm{x}(t)) - \ddt_{| t = t_0} \textnormal{Tr}_{\Omega^-(t)} \bm{u}(t, \bm{x}(t)). 
\end{equation}
If the traces $\bm{u}_\pm (t, \bm{x}(t)) \coloneq \textnormal{Tr}_{\Omega^\pm(t)} \bm{u}(t, \bm{x}(t))$ are sufficiently smooth to allow differentiation, the chain rule gives 
\begin{equation*}
	\ddt_{| t = t_0} \bm{u}_\pm(t, \bm{x}(t)) = \big( \pt \bm{u}_\pm   + \nabla \bm{u}_\pm \bnu \mathcal{V} \big) (t_0, \bm{x}(t_0)), 
\end{equation*} 
where we used that $\ddt_{| t = t_0} \bm{x}(t) = \mathcal{V} \bnu(t_0, \bm{x}(t_0))$. Substituting this back into \eqref{eq:time_deriative_interface} yields the compatibility condition
\begin{equation}\label{eq:compatibility}
	\jump{\pt \bm{u}} = - \jump{\nabla \bm{u}} \bnu \mathcal{V}  \quad \textnormal{on} \quad \Gamma(t). 
\end{equation}
In particular, since for $\bm{u} \in \bm{H}^1(\Omega)$ the jump $\jump{\nabla \bm{u}}$ does not vanish in general, the best regularity we can expect for the time derivative is $\pt \bm{u} \in \bm{H}^1(\Omega \setminus\Gamma(t))$, which is  detrimental for compactness arguments in an approximation scheme. Moreover, it is not entirely clear whether this is a condition on $\pt \bm{u}$ or rather for $\bm{u}$ itself, which might lead to an overdetermined system in the strong formulation. However, we observe that if $\bm{u} \in \bm{H}^2(\Omega)$ and $\pt \bm{u} \in \bm{H}^1(\Omega)$ the compatibility condition is automatically satisfied.\par 
Furthermore, let us examine what happens in the sharp interface limit of the visco-elastic Cahn--Larché system. In this case, and for a fixed $\eps > 0$, the chemical potential satisfies
\begin{equation}\label{eq:mu_ve_CL}
	\mu_\eps =-\eps \Delta \vphi_\eps + \frac{1}{\eps} \psi'(\vphi_\eps) + W_{,\vphi_\eps} (\vphi_\eps, \E(\bu_\eps)). 
\end{equation}
Recall that in \cite{luckhaus_modica} Luckhaus and Modica studied the stationary Cahn--Hilliard equation and were able to show that for $\eps \to 0$ the chemical potential $\mu_\eps$ converges to a quantity proportional to the mean curvature. In their proof they test the equation with $\nabla \vphi_\eps \cdot \bm{B}$, where $\bm{B} \in \bm{C}^\infty_c (\Omega)$, and use the chain rule to obtain
\begin{equation*}
	\int_\Omega \psi'(\vphi_\eps) \nabla \vphi \cdot  \bm{B} \dx  =  \int_\Omega \nabla \psi(\vphi_\eps) \cdot \bm{B} \dx.
\end{equation*}
After integration by parts, the derivative no longer falls on the order parameter $\vphi_\eps$, which becomes discontinuous as $\eps \to 0$, and one can pass to the limit in this term. \\ 
Using the same idea on \eqref{eq:mu_ve_CL}, we obtain the term 
\begin{equation*}
	\Wp(\vphi_\eps, \E(\bm{u}_\eps)) \nabla \vphi_\eps \cdot \bm{B},
\end{equation*}
which we wish to rewrite similarly. By chain rule it holds that 
\begin{equation*}
	\Wp(\vphi_\eps, \E(\bm{u}_\eps)) \nabla \vphi_\eps  = \nabla W(\vphi_\eps, \E(\bm{u}_\eps)) - \WE(\vphi_\eps, \E(\bm{u}_\eps)) \nabla \E(\bm{u}_\eps).
\end{equation*}
Along with the identity $- \nabla \cdot \big( \C_\nu(\vphi_\eps) \E(\pt \bm{u}_\eps) +\WE(\vphi_\eps, \E(\bm{u}_\eps))\big) = 0$, integration by parts yields 
\begin{align*}
	\int_\Omega& \Wp(\vphi_\eps, \E(\bm{u}_\eps)) \nabla \vphi_\eps \cdot \bm{B} \dx \\
	= &- \int_\Omega 
	\Big( W(\vphi_\eps, \E(\bm{u}_\eps)) \bm{I} - (\nabla \bm{u}_\eps)^T \big(\C_{\nu}(\vphi_\eps)  \E( \partial_t \bm{u}_\eps) +  \WE(\vphi_\eps, \E(\bm{u}_\eps)) \big)  \Big)  \colon \nabla \bm{B}  \dx \\ 
	&+  \int_\Omega \C_{\nu}(\vphi_\eps)  \E( \partial_t \bm{u}_\eps) \colon \nabla^2 \bm{u}_\eps \bm{B}  \dx , 	
\end{align*}
which no longer has any derivative on the order parameter $\vphi_\eps$. This result, however, suggests that the weak formulation of the sharp interface limit of the visco-elastic Cahn--Larché features the term $\nabla^2 \bm{u}$, which we do not control in any norm.\\ 
For these reasons, we decided to include the regularizing, second-order term $\abs{\nabla^2 \bm{u}}^2$, often referred to as hyperstress  in the context of non-simple materials, in the energy functional. Note that this is an established approach in nonlinear elasticity, cf.\ \cite{mielke_roub}. \\

\addtocontents{toc}{\SkipTocEntry}
\subsection*{Strong PDE formulation} We say that the family $\A(t)$ of open sets evolves by the regularized visco-elastic Mullins--Sekerka flow, if for each $t \in (0, T_*)$ there exists a chemical potential $w(t, \cdot)$ and a displacement $\bm{u}(t, \cdot)$, such that: 

\textbf{\\  In the bulk:}
\begin{subequations}\label{eq:sg_sharp_bulk_ve}
	\begin{alignat}{2}
		\Delta w &= 0   \quad & \textnormal{in } \Omega \setminus (\partial \mathcal{A}(t) \cap \Omega), \label{id:sg_mu}\\ 
		- \nabla \cdot \Big(\C_{\nu}(\chi) \E(\pt \bm{u})   + \WE (\chi,  \E(\bm{u})) - \div\, \nabla^2 \bm{u}\Big) & = \bm{0} \quad & \textnormal{in } \Omega \setminus (\partial \mathcal{A}(t) \cap \Omega). \label{id:sg_visco_elastic}
	\end{alignat}
\end{subequations}

\noindent
\textbf{On the interface:}
\begin{subequations}\label{eq:sg_sharp_jnterface}
	On $\Gamma(t)$, $t \in (0, T)$, the chemical potential satisfies the relation
	\begin{alignat*}{2}
		w =
		c_0 \varkappa  
		+ \bjump{\tfrac{1}{2} \abs{\nabla^2 \bm{u}}^2 + W(\chi, \E(\bm{u}))  }. \numberthis\label{id:sg_mu_interface}
	\end{alignat*}
	Moreover, we have the following identities on $\Gamma(t), \ t \in (0, T)$,
	\begin{alignat}{2}
		- \jump{\nabla w} \cdot \bnu &=   \mathcal{V} , \label{id:sg_mu_jump_ve}\\ 
		\jump{w} & = 0 , \\ 
		\jump{\bm{u}} & = \bm{0} , \label{id:sg_u} \\ 
		\jump{\nabla \bm{u} }\bnu  & = \bm{0} ,   \label{id:sg_nabla_u}\\ 
		\bjump{ \big( \C_{\nu} (\chi) \E(\pt \bm{u})   + \WE (\chi, \E(\bm{u})) - \div_{\Gamma}\, \nabla^2 \bm{u} \big)} \bnu - \bjump{ \div_{\Gamma}(\nabla^2 \bm{u} \bnu) } &= \bm{0} , \label{id:sg_normal_stress_ve} \\ 
		\bjump{  \nabla^2 \bm{u}}   (\bnu \otimes \bnu) &= \bm{0} .  \label{id:sq_second_normal_}
	\end{alignat}
	By the argument preceding \eqref{eq:compatibility}, it immediately follows from \eqref{id:sg_u} and \eqref{id:sg_nabla_u} that
	\begin{equation}
		\jump{\pt \bm{u}}  = \bm{0} \label{id:sg_jump_u_t_ve}. 
	\end{equation}
\end{subequations}

\begin{subequations}\label{eq:sharp_boundary_sg} 
	
	\noindent
	\textbf{On the boundary:} For all times $t \in (0, T)$, we impose the boundary conditions
	\begin{alignat}{2}
		\nabla w  \cdot \bm{n}_{\po}&= 0  \quad &&\textnormal{on }	\po \setminus \overline{\partial \mathcal{A}(t) \cap \Omega}, \\ 
		\big[ \C_{\nu} (\chi) \E(\pt \bm{u})   + \WE (\chi, \E(\bm{u})) - \div_{\po}\, \nabla^2 \bm{u} \big] \bm{n}_{\po} - \div_{\po}(\nabla^2 \bm{u} \bm{n}_{\po}) &= \bm{0}  \quad &&\textnormal{on } \po \setminus \overline{\partial \mathcal{A}(t) \cap \Omega} ,\\
		\nabla^2 \bm{u}  (\bm{n}_{\po} \otimes \bm{n}_{\po}) &=\bm{0}  \quad &&\textnormal{on } \partial \Omega. 
	\end{alignat}	
	To account for the contact angles, we additionally need 
	\begin{alignat}{2}
		\bnu \cdot  \bm{n}_{\partial\Omega} &= \cos \alpha \quad &&\textnormal{on } \partial \Omega \cap \overline{\partial \A(t) \cap \Omega }, \\ 
		\bjump{ \nabla^2 \bm{u}  \bm{n}_{\po}  }\bm{\tau}_{\po} 
		+ \bjump{ \nabla^2 \bm{u} \bnu }\bm{\tau}_\Gamma
		& = \bm{0}\quad &&\textnormal{on } \partial \Omega \cap \overline{\partial \A(t) \cap \Omega }. 
	\end{alignat}
\end{subequations}
\textbf{\\  Initial conditions:} To complete the system, it only remains to impose initial conditions.
\begin{subequations}\label{eq:sg_initial} 
	\begin{alignat}{2}
		\chi(0, \cdot) &= \chi_0 = \chi_{\A(0)} \quad &&\textnormal{in } \Omega, \\ 
		\bm{u}(0, \cdot) &= \bm{u}_0 \quad &&\textnormal{in } \Omega.
	\end{alignat}
\end{subequations}

\begin{remark}
	While the subsequent derivation is based on a gradient flow approach that replicates the dissipative structure of the diffuse interface setting, we expect this model to also arise as the sharp interface limit of a suitably modified visco-elastic Cahn--Larché system augmented with an analogous second-order regularization.  
\end{remark}

\addtocontents{toc}{\SkipTocEntry}
\subsection*{Properties of the system} First of all, we emphasize that the system is mass conserving. Indeed it holds that 
\begin{equation*}
	\ddt \int_\Omega \chi(t) \dx 
	=  - \int_{\Gamma(t)} \mathcal{V}(t) \dH 
	= \int_{\Gamma(t)} \jump{\nabla w} \cdot \bnu \dH 
	= - \int_\Omega  \Delta w \dx
	= 0. 
\end{equation*}
Moreover, as we will show in detail in Section \ref{sec:gradient_flow}, this model arises as an $H^{-1}$-$\bm{H}^1$ gradient flow of the energy 
\begin{align*}
	\F_{per}&+ \F_{cap}+ \F_{el} + \F_{\mathfrak{h}}\\
	&\coloneqq
	\int_{\Gamma(t)} c_0 \dH 
	+ \int_{ \partial \Omega} c_0 (\cos \alpha) \chi \dH
	+ \int_\Omega W(\chi, \E(\bm{u})) \dx + \int_\Omega \tfrac{1}{2} \abs{\nabla^2 \bm{u}}^2 \dx. 
	\numberthis \label{eq:energy_ms_regularized}
\end{align*}
The elastic energy density $W$, and the hyperstress energy $\F_{\mathfrak{h}}$ have already been discussed above. Moreover, the term $\F_{per}$ measures the perimeter of the open sets $\A(t)$ in $\Omega$, where $c_0 >0$ is a fixed surface tension constant. Lastly, we also fix two additional surface tension constants $\gamma_+, \gamma_-$ satisfying Young's relation $\abs{ \gamma_+ - \gamma_-} < c_0$, and deduce that there must exist an angle $\alpha \in (0, \pi)$ such that 
\begin{equation*}
	(\cos \alpha) c_0 = \gamma_+ - \gamma_-. 
\end{equation*}
Without loss of generality, we may assume that $\gamma_- \leq \gamma_+$; otherwise we could simply switch the roles of $\Omega^\pm$, leading to the restriction that $\alpha \in (0, \tfrac{\pi}{2}]$. With these assumptions at hand, we can consider a capillary contribution of the form,
\begin{equation*}
	\int_{\po} \gamma_+ \chi \dH + \int_{\po} \gamma_- (1 - \chi) \dH
	= \int_{\po} c_0 (\cos \alpha) \chi \dH + \int_{\po} \gamma_- \dH, 
\end{equation*}
where, in a slight abuse of notation, $\chi$ is actually the trace $\textnormal{Tr}_{\po} \chi$. Since the second integral is constant, neglecting it leads to an energetically equivalent formulation. \\
Moreover, a straightforward computation yields the following formal energy identity  
\begin{align*}
	\ddt \Big(& \F_{per}+ \F_{cap}+ \F_{el} + \F_{\mathfrak{h}} \Big) \\ 
	&=\begin{aligned}[t]
		&- \int_{\Gamma(t)} c_0 \varkappa \mathcal{V} \dH \\
		&- \int_{\Gamma(t)}\bjump{\tfrac{1}{2} \abs{\nabla^2 \bm{u}}^2 + W(\chi, \E(\bm{u}))  } \mathcal{V} \dH 
		- \int_\Omega \C_\nu (\chi) \E(\pt \bm{u}) \colon \E(\pt \bm{u}) \dx 
	\end{aligned}\\
	&=-  \int_\Omega \abs{\nabla w}^2\dx - \int_\Omega  \C_\nu (\chi) \E(\pt \bm{u}) \colon \E(\pt \bm{u}) \dx  
	\leq 0, 
\end{align*}
where we made use of the identities \eqref{eq:sg_sharp_bulk_ve}-\eqref{eq:sharp_boundary_sg}. For more detailed computations we refer to the derivation of the system as a gradient flow in Section~\ref{sec:gradient_flow}. We note that under the assumption that $\C_{\nu}$ is a uniformly positive definite fourth order tensor on the symmetric matrices, we can define an equivalent norm on the space $\bm{H}^1_\perp(\Omega)$ -- see Section \ref{sec:prelimiries} for a precise definition -- by 
\begin{equation*}
	\norm{\bm{z}}_{\bm{H}^1_\chi}^2  \coloneqq \int_\Omega \C(\chi) \E(\bm{z}) \colon \E(\bm{z}) \dx. 
\end{equation*}
In particular, this implies that the proposed system is dissipative, with the dissipation depending on the $\bm{H}^1$-norm of $\pt \bm{u}$.

\addtocontents{toc}{\SkipTocEntry}
\subsection*{Main results} 

The first novelty of our work is the introduction of a (regularized) visco-elastic Mullins--Sekerka system \eqref{eq:sg_sharp_bulk_ve}-\eqref{eq:sg_initial}, which is formally derived in Section~\ref{sec:gradient_flow} as a gradient flow of the energy \eqref{eq:energy_ms_regularized}. This functional was obtained by passing to the $\Gamma$-limit in the free energy of the Cahn--Larché system \cite{garcke_00}, with further augmentation by the regularizing hyperstress $\F_{\mathfrak{h}}$. The kinetics of the flow are prescribed by a $H^{-1}_{(0)}$-type metric with respect to the characteristic functions corresponding to a family of smoothly evolving open sets $\A$, along with a $\bm{H}^1$-type metric, depending on the state $\chi$, with respect to the displacement function $\bm{u}$. \\ 
Secondly, inspired by the work of Hensel and Stinson \cite{hensel_stinson}, which was also adapted in the context of two-phase flows \cite{abels2025weaksolutionssharpinterface}, we propose a weak solution concept that includes a sharp energy dissipation inequality in the spirit of De Giorgi, which is tied to the notion of curves of maximal slope in metric spaces, see e.g.\ \cite{ambrosio}. This is also in line with the concepts used for evolutionary $\Gamma$-convergence developed by Sandier and Serfaty \cite{sandier_serfaty}, as well as Le \cite{le08}. \\ 
Moreover, we show existence of solutions corresponding to our weak notion, and further establish the conditional existence of $BV$-solutions in the sense of Luckhaus and Sturzenhecker \cite{sturzenhecker}.

\begin{theorem}[Varifold solutions]\label{thm:varifold} 
	Let $\Omega \subset \R^d$, $d = 2, 3$, be a bounded, smooth domain. 
	For any given terminal time $T_* > 0$, initial data $\chi_0  \in BV (\Omega; \{0, 1\})$ and $\bm{u}_0 \in \bm{H}^1_{\perp}(\Omega) \cap \bm{H}^2(\Omega)$ there exists a measurable map $\chi \colon (0, T_*) \times \Omega \to \{0,1\}$, together with a family of oriented varifolds $(\mu_t)_{t \in (0, T_*)}$, $t \in (0, T_*)$, and a vector field $\bm{u} \colon (0, T_*) \times \Omega \to \R^d$ that satisfy: 
	\begin{enumerate}[label*=(\arabic*), nosep]
		\item (Regularity of the maps) The characteristic function $\chi$ and the displacement $\bm{u}$ are of the following classes: 
		\begin{align*}
			\chi &\in L^\infty_{w^*}(0, T_*; BV(\Omega; \{0,1\})) \cap H^1(0, T_*; H^{-1}_{(0)}(\Omega)), \\ 
			\bm{u} &\in H^1(0, T_*; \bm{H}^1_\perp(\Omega)) \cap L^\infty(0, T_*; \bm{H}^2(\Omega)). 
		\end{align*} 
		
		\item  (Consistency with initial data) It holds that $\chi(0) = \chi_0$ in $H^{-1}_{(0)}(\Omega)$ and $\bm{u}(0) = \bm{u}_0$ in $\bm{H}^1_\perp(\Omega)$. 
		\item (Structure of the oriented varifolds) For almost all $t \in (0, T_*)$, the oriented varifolds $(\mu_t)_{t\in (0, T_*)}$ can  be decomposed as $\mu_t = c_0 \mu_t^\Omega + c_0(\cos \alpha) \mu_t^{\po}$, where $\mu^\Omega_t$ and $\mu^{\po}_t$ are two separate varifolds that are given in their disintegrated form by 
		\begin{alignat*}{2}
			\mu_t^\Omega &= \abs{\mu_t^\Omega} \otimes (\mu_{t, \bm{x}}^\Omega)_{\bm{x}\in \overline{\Omega}}  &&\in M(\overline{\Omega} \times \S), \\  
			\mu_t^{\po}&= \abs{\mu_t^{\po}} \otimes (\delta_{\bm{n}_{\po} (\bm{x})})_{\bm{x}\in \overline{\po}} &&\in M({\partial \Omega} \times \S), 
		\end{alignat*}
		where $\mu^\Omega_{t, \bm{x}}$ is a probability measure on $\S$ for almost all $t \in (0, T_*)$ and $\abs{\mu^\Omega_t}$-almost all $\bm{x} \in \overline{\Omega}$. 
		Moreover, the measure $\abs{\mu^{\po}_t}$ coincides for almost all $t \in (0, T_*)$  with $g_t \mathcal{H}^{d-1}\llcorner\po$, where the non-negative function $g_t = g(t, \cdot)$ satisfies $g \in L^\infty((0, T_*) \times \po; [0, 1])$. 
		\item (Compatibility of the phase indicator) The varifolds $(\mu_t)_{t \in (0, T_*)}$ and the characteristic functions $\chi(t)$ are compatible in the sense that
		\begin{align}
			\int_{\overline{\Omega} \times \mathbb{S}^{d-1}} \bm{\eta} (\bm{x}) \cdot \bm{s}  \, d\mu^\Omega_t(\bm{x}, \bm{s})
			=
			\int_\Omega \bm{\eta} (\bm{x}) \cdot \, d \nabla \chi(t)
			\label{eq:compatibility_thm_I}
		\end{align}
		for all $ \bm{\eta} \in \bm{C}^1(\overline{\Omega})$ with $\bm{\eta}_{|\po} \cdot \bm{n}_{\po} = 0$, and 
		\begin{align*}		
			-\int_{\overline{\Omega} \times \S}& \bm{s} \cdot \bm{\xi} \, d \mu_t^{\Omega}(\bm{x}, \bm{s}) \\ 
			&= - \int_\Omega  \bm{\xi} \cdot d \nabla \chi(t, \cdot)
			+ (\cos \alpha) \Big(
			\abs{\mu_t^{\po}}(\po) - \int_{\po} \chi(t, \cdot ) \dH 
			\Big) 
			\numberthis \label{eq:compatibility_thm}
		\end{align*}
		for all $\bm{\xi} \in \bm{C}^1(\overline{\Omega})$ with $\bm{\xi}_{|\po} \cdot \bm{n}_{\po} = \cos \alpha$, and almost all $t \in (0, T_*)$. Moreover the trace of the interface on the boundary is contained in the sense that 
		\begin{equation}\label{eq:boundary_measure_inequaltiy}
			(\cos \alpha) \chi(t, \cdot ) \mathcal{H}^{d-1} \llcorner \po \leq \cos \alpha \Big( \abs{\mu_t^{\Omega}} \llcorner \po + \abs{ \mu_t^{\po} }  \Big) \quad \textnormal{for a.e.\ } t \in (0, T_*). 
		\end{equation}
		
		\item (Generalized Gibbs-Thomson law) There exists an associated potential $w \in L^2(0, T_*; H^1(\Omega))$ that satisfies the Gibbs--Thomson law in the weak form 
		\begin{align*}
			&\int_{\overline{\Omega} \times \S} (\bm{I} - \bm{s} \otimes \bm{s}) \colon \nabla \bm{B} \, d \mu_t (\bm{x}, \bm{s})\numberthis \label{eq:potential_thm}\\ 
			& \quad = \int_\Omega \chi \  \div \big( w \bm{B} \big) \dx  
			+ \int_\Omega \C_{\nu}(\chi)  \E(\pt \bm{u})
			\colon \nabla^2 \bm{u} \bm{B}  \dx 
			-\int_\Omega \tfrac{1}{2} \abs{\nabla^2 \bm{u}}^2\  \div \bm{B} \dx \\ 
			&\qquad -  \int_\Omega \Big[ W(\chi, \E(\bm{u})) \bm{I} -  ( \nabla \bm{u} )^T \big( \C_{\nu}(\chi)  \E (\pt \bm{u}) +  W_{, \E} (\chi, \E(\bm{u})  \big)\Big] \colon \nabla \bm{B} \dx 
			\\ 
			&\qquad + \int_\Omega  (\nabla^2 \bm{u})_{ijk} (\nabla \bm{u})_{ip}\,(\nabla^2 \bm{B})_{pjk}  \dx 
			+ \int_\Omega(\nabla^2 \bm{u})_{ijk}  \big( (\nabla^2 \bm{u})_{ipk}\,(\nabla\bm{B})_{jp} 
			+ (\nabla^2 \bm{u})_{ijp}\,(\nabla\bm{B})_{kp} \big) \dx
		\end{align*}
		for  all $\bm{B} \in \bm{C}^2(\Omega)$ with $\bm{B}_{|\partial \Omega} \cdot \bm{n}_{\partial_\Omega} = 0$ and almost all $t \in (0, T_*)$.
		
		\item (Time evolution of phase indicator) There is a potential 
		\begin{equation*}
			v = -(-\Delta_N)^{-1} \pt \chi \in L^2(0, T; H^1_{(0)} (\Omega))
		\end{equation*}
		such that 
		\begin{equation}\label{eq:dt_chi}
			\int_0^{T_*} \int_\Omega {\chi}\,  \pt \zeta \dx \dt
			- \int_0^{T_* }\int_\Omega  \nabla v \cdot \nabla \zeta \dx \dt = - \int_\Omega \chi_0 \zeta(0, \bm{x}) \dx, 
		\end{equation}
		for all  $\zeta \in C^1_c([0, T_*) \times \Omega) \cap H^1(0, T_*; H^1_{(0)})$. Moreover, it holds that $\int_\Omega \chi(t) \dx  = \int_\Omega \chi_0 \dx$ for almost all $t \in (0, T_*)$. 
		
		\item (Elasticity equation) Moreover, the displacement satisfies 
		\begin{align}\label{eq:elastic_thm}
			\int_0^{T_*}\int_\Omega 
			\big[\C_{\nu}(\chi) \E ( \pt \bm{u}) 
			+ \WE ({\chi}, \E({\bm{u}}))
			\big] 
			\colon \E(\bm{v}) 
			+ \nabla^2 {\bm{u}}\colon  \nabla^2 \bm{v} \dx \dt  = 0
		\end{align}
		for all $\bm{v} \in L^2(0, T_*; \bm{H}^2(\Omega))$.  
		\item(Sharp energy dissipation inequality) These functions further satisfy for almost all $0 \leq s \leq T \leq T_*$ the following energy dissipation inequality 
		\begin{align*}\label{eq:energy_dissipation_thm}
			\F(\mu, \chi, \bm{u}) (T)
			+ &\frac{1}{2} \int_s^T 
			\norm{\pt \chi}_{H^{-1}_{(0)}}^2 + \norm{ \pt \bm{u}}^2_{\bm{H}^1_{\chi(t)}} \dt \\ 
			&+  \frac{1}{2} \int_s^T  \norm{\nabla w}_{L^2}^2 
			+  \norm{ \pt \bm{u}}^2_{\bm{H}^1_{\chi(t)}} \dt 
			\leq  \F(\mu, \chi, \bm{u}) (s) \leq \F(\chi_0, \bm{u}_0). 
			\numberthis
		\end{align*}
		Here, we write (in a slight abuse of notation)
		\begin{equation*}
			\F(\mu, \chi, \bm{u}) (t) 
			\coloneqq 
			\int_{\overline{\Omega}} d|\mu_t| 
			+ \int_\Omega W(\chi, \E(\bm{u}))(t) \dx 
			+ \int_\Omega \tfrac{1}{2} | \nabla^2 \bm{u} |^2(t) \dx. 
		\end{equation*}
		
		\item (Measurability  of the total mass measure) The total mass measure associated with the oriented varifolds $(\mu_t)_{t \in (0, T_*)}$, i.e., 
		\begin{equation*}
			\abs{\mu_t}(\overline{\Omega}) = c_0 \abs{\mu_t^\Omega}(\overline{\Omega}) + c_0 (\cos \alpha) \abs{\mu^{\po}_t} (\partial \Omega)
		\end{equation*}
		is a measurable map from $t \in (0, T_*)$ to $[0, \infty)$.  
	\end{enumerate}
\end{theorem}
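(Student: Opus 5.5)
We construct the solution by a minimizing movement (implicit time discretization) scheme, following the strategy of Hensel and Stinson. Fix a time step $h = T_*/N$. Set $\chi^0_h = \chi_0$ and $\bm{u}^0_h = \bm{u}_0$. Define iteratively $(\chi^n_h, \bm{u}^n_h)$ as a minimizer over $\chi \in BV(\Omega;\{0,1\})$ with $\int_\Omega \chi = \int_\Omega \chi_0$ and $\bm{u} \in \bm{H}^1_\perp(\Omega)\cap \bm{H}^2(\Omega)$ of
\begin{equation*}
E(\chi,\bm{u}) + \frac{1}{2h}\norm{\chi - \chi^{n-1}_h}^2_{H^{-1}_{(0)}} + \frac{1}{2h}\norm{\bm{u}-\bm{u}^{n-1}_h}^2_{\bm{H}^1_{\chi^{n-1}_h}},
\end{equation*}
where $E$ denotes the energy \eqref{eq:energy_ms_regularized}. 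We use the frozen state $\chi^{n-1}_h$ in the metric so that the elastic Euler--Lagrange equation is linear in $\bm{u}$. Existence of a minimizer follows from the direct method. The capillary functional $\F_{per}+\F_{cap}$ is lower semicontinuous in $L^1$ because $|\cos\alpha|<1$ (Modica-type lower semicontinuity for the contact energy). Coercivity in $\bm{H}^2$ comes from $\F_{\mathfrak h}$, Korn's inequality on $\bm{H}^1_\perp$, and the positivity of $\C$. $W$ is continuous under strong $L^1$ convergence of $\chi$ together with strong $\bm{H}^1$ convergence of $\bm{u}$ (compact embedding $\bm{H}^2\hookrightarrow \bm{H}^1$).

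\textbf{Discrete Euler--Lagrange equations.} Varying $\bm{u}$ gives the discrete version of \eqref{eq:elastic_thm}. Varying $\chi$ by inner variations $\chi\circ\Phi_s^{-1}$, with $\Phi_s$ the flow of a tangential field $\bm{B}$, while simultaneously transporting $\bm{u}$ by $\bm{u}\circ\Phi_s^{-1}$, yields the discrete Gibbs--Thomson law \eqref{eq:potential_thm}. The transport of $\bm{u}$ produces the Eshelby-type terms $W\bm{I}-(\nabla\bm{u})^T(\cdots)$ and the second-gradient commutator terms involving $\nabla^2\bm{B}$. The dissipation term $\C_\nu\E(\pt\bm{u}):\nabla^2\bm{u}\bm{B}$ arises from differentiating the $\bm{u}$-metric term along the transport; the discrete time derivative appears there. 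Here $w_h$ is the discrete potential $(-\Delta_N)^{-1}$ of the difference quotient plus a Lagrange multiplier for the mass constraint.

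\textbf{Interpolants and the De Giorgi inequality.} We introduce piecewise constant, piecewise linear and De Giorgi variational interpolants. The standard abstract theory (Ambrosio--Gigli--Savaré) gives the discrete sharp energy inequality: the metric slope term and the $\frac12\int|\partial E|^2$ term. For the slope to be estimated by $\norm{\nabla w}^2+\norm{\pt\bm{u}}^2_{\bm{H}^1_\chi}$, we need an $L^2(0,T_*;H^1)$ bound on $w_h$. For this we control the mean of $w_h$ (the Lagrange multiplier) by testing the discrete Gibbs--Thomson law with a fixed field $\bm{B}$ that moves mass across the interface. The bound then follows via the energy bound and the $\bm{H}^2$ bound on $\bm{u}$. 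One subtlety is that the metric depends on $\chi^{n-1}$. This is handled by the uniform equivalence of the norms $\norm{\cdot}_{\bm{H}^1_\chi}$ and by the convergence of $\C_\nu(\chi_h)$ in $L^p$.

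\textbf{Passage to the limit.} The energy bound gives uniform estimates in the spaces of item (1). Aubin--Lions/Simon compactness yields $\chi_h\to\chi$ in $L^1$ and $\bm{u}_h\to\bm{u}$ strongly in $L^2(\bm{H}^{2-\delta})$, with $\pt\bm{u}_h\rightharpoonup\pt\bm{u}$ in $L^2(\bm{H}^1)$. The varifolds are obtained by lifting $c_0|\nabla\chi_h|$ and the boundary trace measures to oriented varifolds and extracting time-measurable weak-$*$ limits (a Young-measure/disintegration argument in $t$). This gives items (3), (4) and (9); the inequality \eqref{eq:boundary_measure_inequaltiy} and the compatibility identities pass to the limit from their exact discrete versions.

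\textbf{Main obstacle.} The hardest step is passing to the limit in the quadratic terms of \eqref{eq:potential_thm}, namely $|\nabla^2\bm{u}|^2$, $\nabla^2\bm{u}\otimes\nabla^2\bm{u}$, and $\C_\nu\E(\pt\bm{u}):\nabla^2\bm{u}\bm{B}$, which require strong $L^2(L^2)$ convergence of $\nabla^2\bm{u}_h$. Our plan is to obtain this from the elastic equation: test the discrete and limit equations with $\bm{u}_h-\bm{u}$ and use the strong lower-order convergence. The dissipation term is handled via its time-integrated form $\frac12\frac{d}{dt}\int\C_\nu\E(\bm{u}):\E(\bm{u})$ together with $\chi_h\to\chi$. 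This shows $\limsup\int|\nabla^2\bm{u}_h|^2\le\int|\nabla^2\bm{u}|^2$, which gives strong convergence. The energy inequality (8) then follows by lower semicontinuity of all terms.
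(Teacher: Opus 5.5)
Your architecture is the one the paper uses: $\chi^{n-1}_h$ frozen in the $\bm{H}^1$-metric, De Giorgi interpolants, inner variations that transport $\bm u$, a Lagrange multiplier controlled through a fixed mass-moving field $\bm\xi$, and strong $L^2(0,T_*;\bm H^2(\Omega))$ convergence obtained by testing the discrete and limit elastic equations with the difference.

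\textbf{The gap.} Your last sentence does not hold: item (8) does not follow from lower semicontinuity. Lower semicontinuity handles the dissipation integrals. The energies at the fixed times $T$ and $s$, however, need a pointwise-in-time relation between the slice mass $\abs{\mu_t}(\overline\Omega)$ of the space--time weak-$*$ limit and the discrete masses $\abs{\mu^h_t}(\overline\Omega)$. Weak-$*$ convergence in $(0,T_*)\times\overline\Omega\times\S$ does not provide this, since it does not see oscillations in time. At time $s$ you even need an \emph{upper} bound, i.e.\ the convergence $\F(\bar\chi^h,\bar{\bm u}^h)(s)\to\F(\mu,\chi,\bm u)(s)$. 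The varifold is introduced precisely so that this convergence holds, but it has to be proved.

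The paper proves it in three steps:
\begin{itemize}
\item $t\mapsto\F(\bar\chi^h,\bar{\bm u}^h)(t)$ is non-increasing by the De Giorgi minimality, so Helly's theorem gives pointwise convergence of these energies for every $t$.
\item The elastic and hyperstress energies converge for a.e.\ $t$ by strong $\bm H^2$ convergence. Subtracting them gives $\abs{\mu^h_t}(\overline\Omega)\to e_\mu(t)$ for a.e.\ $t$.
\item Testing the limit measure with $\eta(t)\cdot 1$ and disintegrating shows it has the form $\mathcal L^1\otimes(\mu_t)_t$ with $\abs{\mu_t}(\overline\Omega)=e_\mu(t)$ for a.e.\ $t$. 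This also yields item (9).
\end{itemize}
An alternative route would also work. Average the discrete inequality over short time windows around $s$ and $T$. Use that $t\mapsto\abs{\mu^h_t}(\overline\Omega)$ converges weakly-$*$ in $L^\infty(0,T_*)$ to the slice mass, since $\eta(t)\cdot 1$ is an admissible test function on the compact set $\overline\Omega\times\S$. Then conclude at Lebesgue points. Some argument of this kind is indispensable.

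\textbf{Smaller omissions.}

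(i) The Gibbs--Thomson law and the second dissipation term live at the De Giorgi interpolant. There the quotient $(\bar{\bm u}^h-\bm u^h_{\lfloor t/h\rfloor})/(t-\lfloor t/h\rfloor h)$ appears instead of $\pt\hat{\bm u}^h$. The De Giorgi term is by definition $\norm{\nabla w^h}_{L^2}^2$ plus the norm of this quotient, so no slope estimate is needed. The weak limit $\mathfrak U$ of the quotient must, however, be identified with $\pt\bm u$. The paper passes to the limit in both discrete elastic equations and uses uniqueness for $\int_\Omega\C_\nu(\chi)\E(\cdot)\colon\E(\bm v)\dx$ on $\bm H^1_\perp(\Omega)$. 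Without this step, (5) and the last term of (8) contain $\mathfrak U$ rather than $\pt\bm u$.

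(ii) In the strong $\bm H^2$ argument the ``$\tfrac12\tfrac{d}{dt}$'' device is unnecessary, and it is not available: the coefficient $\C_\nu(\chi_h(t))$ jumps in time and the quotient is not a time derivative. The dissipative stress is simply bounded in $L^2(0,T_*;\bm L^2(\Omega))$ and is paired with $\E(\bm u_h-\bm u)\to\bm 0$ strongly.

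(iii) \eqref{eq:boundary_measure_inequaltiy} cannot be obtained by passing a discrete inequality to the limit. The boundary traces of $\bar\chi^h$ converge only weakly-$*$ to $g$, not to the trace of $\chi$. The inequality must be derived from the localized limit compatibility identity, where the trace of $\chi$ enters through the Gauss--Green formula, by letting $\bm\xi\to(\cos\alpha)\bm n_{\po}$.
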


\medskip 

\begin{corollary}\label{cor:properties_varifold}
	There exists a function $\theta \colon (0, T) \times \Omega \to [0, \infty)$ with $\abs{\theta} \leq 1$ almost everywhere, such that it hold for almost all $t \in (0, T_*)$ 
	\begin{equation}\label{eq:normal_proportional}
		\int_{\mathbb{S}^{d-1}} \bm{s} \, d\mu_{t, \bm{x}} (\bm{s})
		= \left\{
		\begin{alignedat}{2}
			&\theta(t, \bm{x}) \bm{\nu}( t, \bm{x}) \quad && \textnormal{for $\abs{\mu^\Omega_t}$-a.e. }	\bm{x} \in \partial^* \mathcal{A}(t), \\ 
			&0 \quad && \textnormal{for $\abs{\mu^\Omega_t}$-a.e. }	\bm{x} \not\in \partial^* \mathcal{A}(t), \\ 
		\end{alignedat}
		\right.
	\end{equation}
	where $ \bm{\nu}( t, \bm{x}) = \tfrac{\nabla \chi(t, \bm{x})}{\abs{\nabla \chi(t, \bm{x})}}$ denotes the interior normal of the reduced boundary $\partial^* \mathcal{A}(t)$ with $\chi(t) = \chi_{\mathcal{A}(t)}$. Moreover, if $\abs{\mu(t)}(\Omega) = \abs{\nabla \chi(t)}$, then $\mu_{t, \bm{x}} = \delta_{\bnu(t, \bm{x})}$ for $\abs{\mu_t}$-a.e.\ $x \in \Omega$. 
\end{corollary}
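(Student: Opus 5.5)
The plan is to derive the corollary directly from the compatibility condition \eqref{eq:compatibility_thm_I} of Theorem~\ref{thm:varifold}, using a Radon--Nikodym argument. Fix a time $t$ in the full-measure set where items (3) and (4) of Theorem~\ref{thm:varifold} hold. Define the vector-valued Radon measure $\bm{m}_t$ on $\overline{\Omega}$ as the first moment of $\mu^\Omega_t$:
\begin{equation*}
\bm{m}_t(A) \coloneqq \int_{A\times\S} \bm{s}\, d\mu^\Omega_t(\bm{x},\bm{s}) = \int_A \Big(\int_{\S} \bm{s}\, d\mu^\Omega_{t,\bm{x}}(\bm{s})\Big) d\abs{\mu^\Omega_t}(\bm{x}).
\end{equation*}
The vector field $\bm{m}_{t,\bm{x}} \coloneqq \int_{\S}\bm{s}\,d\mu^\Omega_{t,\bm{x}}$ satisfies $\abs{\bm{m}_{t,\bm{x}}}\le 1$, since each $\mu^\Omega_{t,\bm{x}}$ is a probability measure. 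In this notation, \eqref{eq:compatibility_thm_I} reads $\bm{m}_t \llcorner \Omega = \nabla\chi(t)$ when tested against fields $\bm{\eta}\in\bm{C}^1_c(\Omega)$. These fields satisfy the tangency condition trivially, and they are dense enough to identify Radon measures on the open set $\Omega$.

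Next I use the Lebesgue--Radon--Nikodym decomposition. Since $\abs{\nabla\chi(t)} = \abs{\bm{m}_t}\llcorner\Omega \le \abs{\mu^\Omega_t}\llcorner\Omega$, the measure $\abs{\nabla\chi(t)} = \mathcal{H}^{d-1}\llcorner\partial^*\A(t)$ is absolutely continuous with respect to $\abs{\mu^\Omega_t}\llcorner\Omega$. Write $\theta(t,\bm{x}) \coloneqq \frac{d\abs{\nabla\chi(t)}}{d\abs{\mu^\Omega_t}}(\bm{x})\in[0,1]$. The equality of measures $\bm{m}_{t,\bm{x}}\,\abs{\mu^\Omega_t} = \bnu\,\abs{\nabla\chi(t)} = \bnu\,\theta\,\abs{\mu^\Omega_t}$ then gives $\bm{m}_{t,\bm{x}} = \theta\bnu$ for $\abs{\mu^\Omega_t}$-a.e.\ $\bm{x}\in\Omega$. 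Since $\abs{\nabla\chi(t)}$ is concentrated on $\partial^*\A(t)$, the density $\theta$ vanishes $\abs{\mu^\Omega_t}$-a.e.\ off $\partial^*\A(t)$. This yields both cases of \eqref{eq:normal_proportional}, and the bound $\theta\le 1$ follows from $\abs{\bm{m}_{t,\bm{x}}}\le1$.

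The main obstacle is joint measurability of $\theta$ in $(t,\bm{x})$. Here I would use the measurability in time built into the construction: $\chi\in L^\infty_{w^*}(0,T_*;BV)$, and the varifolds are obtained as Young-measure type limits, which makes $t\mapsto\mu_t^\Omega$ weakly-$*$ measurable. I would then define $\theta$ as the limit of the difference quotients $\abs{\nabla\chi(t)}(B_r(\bm{x}))/\abs{\mu^\Omega_t}(B_r(\bm{x}))$ along $r=1/n$. These quotients are measurable in $(t,\bm{x})$, and by the Besicovitch differentiation theorem they converge $\abs{\mu^\Omega_t}$-a.e. A set of measure zero is all that is left to control, and on it I simply set $\theta = 0$.

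For the final claim, assume $\abs{\mu^\Omega_t}(\Omega) = \abs{\nabla\chi(t)}(\Omega)$. Combined with $\abs{\nabla\chi(t)}\le\abs{\mu^\Omega_t}\llcorner\Omega$, this forces equality of the two measures, so $\theta=1$ $\abs{\mu^\Omega_t}$-a.e.\ in $\Omega$. Hence $\int_{\S}\bm{s}\,d\mu_{t,\bm{x}} = \bnu(t,\bm{x})$ is a unit vector. A probability measure on $\S$ whose barycenter has modulus one must be a Dirac mass: strict convexity of the unit ball gives $\int(1-\bm{s}\cdot\bnu)\,d\mu_{t,\bm{x}}=0$ with a nonnegative integrand, so $\bm{s}=\bnu$ $\mu_{t,\bm{x}}$-a.e. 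Therefore $\mu_{t,\bm{x}}=\delta_{\bnu(t,\bm{x})}$.
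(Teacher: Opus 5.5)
Your proposal is correct and follows the same route as the paper. You test the compatibility identity \eqref{eq:compatibility_thm_I} with fields compactly supported in $\Omega$ to get $\abs{\nabla\chi(t)}\le\abs{\mu^\Omega_t}\llcorner\Omega$, and the Radon--Nikodym density $\theta_t$ then identifies the barycenter of $\mu^\Omega_{t,\bm{x}}$ as $\theta_t\bm{\nu}$ on $\partial^*\mathcal{A}(t)$ and $0$ elsewhere in $\Omega$. Your extra steps fill in points the paper's proof leaves implicit: joint measurability of $\theta$ through Besicovitch differentiation (the ball ratios are measurable by lower semicontinuity in $\bm{x}$ together with the time-measurable disintegration of $\mu^\Omega$), and the Dirac-mass conclusion from $\int_{\S}(1-\bm{s}\cdot\bm{\nu})\,d\mu_{t,\bm{x}}=0$.
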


\begin{proof}
	Recall that \eqref{eq:compatibility_thm_I} requires  
	\begin{equation*}
		\int_{\overline{\Omega} \times \mathbb{S}^{d-1}}  \bm{\eta} (\bm{x})  \cdot  \bm{s}\, d\mu_t^\Omega(\bm{x}, \bm{s})
		=
		\int_\Omega \eta (\bm{x}) \cdot \, d \nabla \chi(t)
	\end{equation*}
	for all $ \bm{\eta} \in \bm{C}^1(\overline{\Omega})$ with $ \bm{\eta}_{|\po}(t) \cdot \bm{n}_{\po} = 0$ and a.e.\ $t \in (0, T_*)$.
	In particular, this implies for all open sets $A \subset \Omega$ and all $\bm{f} \in \bm{C}^0_c(\Omega)$ with $\supp \bm{f} \subset A$ and $\norm{\bm{f}}_{\infty} \leq 1$ that 
	\begin{align*}
		\int_\Omega \xi (\bm{x}) \cdot \, d \nabla \chi(t) 
		= \int_{\overline{\Omega} \times \mathbb{S}^{d-1}}  \bm{\xi} (\bm{x}) \cdot \bm{s} \, d\mu^\Omega_t(\bm{x}, \bm{s})
		= \int_A \Big( \int_{\mathbb{S}^{d-1}} \bm{\xi} (\bm{x}) \cdot \bm{s} \, d\mu_{t, \bm{x}}(\bm{s}) \Big)   d \abs{\mu_t}(\bm{x})
		\leq 
		\int_A 1    d \abs{\mu_t}(\bm{x}), 
	\end{align*}
	which entails
	\begin{equation*}
		\abs{\nabla \chi(t)}(A) = \sup \Big\{  \int_\Omega \bm{f}(\bm{x}) \cdot d \nabla \chi  \colon \bm{f} \in \bm{C}^0_c(\Omega), \supp \bm{f} \subset A, \norm{\bm{f}}_{\infty} \leq 1 \Big\}
		\leq \abs{\mu_t^\Omega}(A).
	\end{equation*}
	Hence, $\abs{\nabla \chi(t)}$ is absolutely continuous with respect to $\abs{\mu_t^\Omega} \llcorner\Omega$ and there exists some $\abs{\mu_t^\Omega}$-measurable function $\theta_t \colon \Omega \to [0, \infty)$ with $\theta_t \leq 1$ for $\abs{\mu_t}$-a.e.\ $\bm{x} \in \Omega$ such that 
	\begin{equation}\label{eq:measurs_abs_cont}
		\abs{\nabla \chi(t)}(A) = \int_A \theta_t(\bm{x}) \, d\abs{\mu^\Omega_t}(\bm{x}). 
	\end{equation}
	In particular, $\theta_t(\bm{x}) = 0$ for $\abs{\mu^\Omega_t}$-almost all $\bm{x} \in \Omega$ with $\bm{x} \not\in \partial^*\mathcal{A}(t)$. 
	After disintegrating $\mu_t^\Omega$ into the non-negative mass measure $\abs{\mu_t^\Omega}$ and a family of probability measures $(\mu_{t, \bm{x}})_{\bm{x} \in \overline{\Omega}}$, see Theorem \ref{thm:disintegration}, and exploiting the compatibility condition \eqref{eq:compatibility_thm} along with the identity \eqref{eq:measurs_abs_cont}, we obtain the first assertion.
\end{proof}

\begin{remark}
	\begin{enumerate}[label = (\roman*), nosep]
		\item The identity \eqref{eq:measurs_abs_cont} implies $\supp \nabla \chi(t) \subseteq \supp \mu_t^\Omega $ and $\abs{\nabla \chi(t)} (\Omega) \leq \abs{\mu_t^\Omega}(\Omega)$. 
		\item Since $\chi(t) = \chi_{\A(t)}$, it follows that $\A(t)$ is a set of finite perimeter for a.e.\ $t \in (0, T_*)$. Hence,  \eqref{eq:normal_proportional} asserts that the expectation of $\mu_{t, \bm{x}}^\Omega$ is proportional to the normal $\bnu(t, \bm{x})$ on $\partial^*\A(t)$ and zero otherwise. 
	\end{enumerate}
\end{remark}

\begin{remark}[Integer varifolds, generalized mean curvature and rectifiability] \label{rem:integer_varifolds}
	Comparing our notion of a weak solution to the concepts proposed in \cite{roeger, hensel_stinson}, one quickly sees that theirs is strictly stronger. For one, they require $(\mu_t^\Omega)_{t \in (0, T_*)}$ to be integer varifolds, entailing, in particular, that apart from zero the function $\theta$ in Corollary~\ref{cor:properties_varifold} may only attain values in $\{\tfrac{1}{n}\}_{n \in \N}$. Moreover, they also demand the existence of a generalized mean curvature vector for the interface $\Gamma(t)$. The proof of these properties mainly relies on estimates for the first variation of the varifolds.  More precisely, one needs to show local boundedness, such that the classical theory by Allard \cite{allard} and the profound results of Schätzle \cite{schaetzle_01} may be applied. Unfortunately, considering the distributional formulation of the Gibbs--Thomson law \eqref{eq:potential_thm}, it is immediately obvious that verifying these bounds is prohibitively difficult in our situation. \\ 
	While rectifiability is another consequence of the insights in these references, there are also other tools available. For example, Garcke and Kwak \cite{kwak} utilized a result by Luckhaus \cite{luckhaus_05} to show rectifiability for a Mullins--Sekerka-type system that was coupled to homogeneous linear elasticity, but still required an additional assumption on the density of the varifolds. There is, however, another caveat concerning this strategy. To implement a similar approach for the presented model, one would have to obtain even higher regularity, more precisely $\bm{H}^3$ estimates, for the displacement $\bm{u}$, but due to the natural (Neumann) boundary conditions involving $\nabla \pt \bm{u}$, and lacking regularity of $\pt \bm{u}$, this also seems out of reach. 
\end{remark}

\begin{corollary}[Modified energy dissipation inequality]\label{cor:energy_dissipation}
	For a solution $(\mu,\chi, \bm{u})$ of the visco-elastic Mullins--Sekerka system in the sense of Theorem \ref{thm:varifold}, it holds for almost all $0 \leq s \leq T \leq T_*$ that 
	\begin{align*}\label{eq:energy_dissipation_cor}
		\F&(\mu, \chi, \bm{u}) (T)\\
		&+ \frac{1}{2}\int_s^T  
		\begin{aligned}[t]
			\Big[  \norm{\pt \chi}_{H^{-1}_{(0)}}^2 
			+  \norm{ \pt \bm{u}}^2_{\bm{H}^1_{\chi(t)}}
			+ \delta \F(\mu, \chi, \bm{u} )[\bm{B}, \bm{v}] 
			& - \frac{1}{2} \norm{\bm{B}\cdot  \nabla \chi}_{H^{-1}_{(0)}}^2 - \frac{1}{2}\norm{\bm{v} - \nabla \bm{u}\bm{B}}_{\bm{H}^1_{\chi(t)}}^2 \Big] \dt 
		\end{aligned}\\ 
		&\!\!\!\!\leq  \F(\mu, \chi, \bm{u}) (s) \leq \F(\chi_0, \bm{u}_0)
		\numberthis
	\end{align*}
	for all measurable maps $\bm{v} \colon (0, T_*) \to  \bm{H}^2(\Omega)$ with $\bm{v}(t) \in \bm{H}^1_\perp (\Omega)$ for all $t \in (0, T_*)$ and satisfying $\norm{\bm{v}}_{\bm{H}^2(\Omega)} \in L^2(0, T_*)$, and all measurable functions $\bm{B} \colon (0, T_*) \to \bm
	C^2(\overline{\Omega})$ such that $\bm{B}(t) \in \mathcal{S}_{\chi(t)}$ for all $t \in (0, T_*)$ and $\norm{\bm{B}}_{\bm{C}^2} \in L^2(0, T_*)$. \\  
	Moreover, it even holds for almost all $0 \leq s \leq T \leq T_*$ that 
	\begin{align*}\label{eq:energy_dissipation_cor_sup}
		\F&(\mu, \chi, \bm{u}) (T)\\
		&
		\begin{aligned}[t]
			&+ \frac{1}{2}\int_s^T   
			\norm{\pt \chi}_{H^{-1}_{(0)}}^2 
			+  \norm{ \pt \bm{u}}^2_{\bm{H}^1_{\chi(t)}} \dt  \\ 
			&+ \frac{1}{2}\int_s^T  
			\sup_{\substack{\bm{B} \in \mathcal{S}_{\chi(t, \cdot)} \\ \bm{v} \in \bm{H}^1_\perp(\Omega) \cap \bm{H}^2(\Omega)}} 
			\Big\{  \delta \F(\mu, \chi, \bm{u} )[\bm{B}, \bm{v}] 
			- \frac{1}{2} \norm{\bm{B}\cdot  \nabla \chi}_{H^{-1}_{(0)}}^2 - \frac{1}{2}\norm{\bm{v} - \nabla \bm{u}\bm{B}}_{\bm{H}^1_{\chi(t)}}^2 \Big\} \dt 
		\end{aligned}\\ 
		&\!\!\!\!\leq  \F(\mu, \chi, \bm{u}) (s) \leq \F(\chi_0, \bm{u}_0). 
		\numberthis
	\end{align*}
\end{corollary}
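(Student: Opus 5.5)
The plan is to obtain the modified inequality \eqref{eq:energy_dissipation_cor} from the sharp inequality \eqref{eq:energy_dissipation_thm}. The idea is to bound the second half of the dissipation, $\tfrac12\int_s^T \norm{\nabla w}_{L^2}^2 + \norm{\pt \bm{u}}^2_{\bm{H}^1_{\chi(t)}}\dt$, from below by the bracketed expression involving $\delta\F[\bm{B},\bm{v}]$. This is a Fenchel–Young (completing the square) argument.

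\textbf{Step 1: express the first variation through the equations.} Fix $t$ such that \eqref{eq:potential_thm} and \eqref{eq:elastic_thm} hold (the latter pointwise in $t$, after localizing in time with test functions $\varphi(t)\bm{v}$). Take $\bm{B}\in\mathcal{S}_{\chi(t)}$ and $\bm{v}\in\bm{H}^1_\perp\cap\bm{H}^2$. The first variation $\delta\F(\mu,\chi,\bm{u})[\bm{B},\bm{v}]$ should be the sum of two parts:
\begin{itemize}
\item the varifold part $\int(\bm I-\bm s\otimes\bm s)\colon\nabla\bm B\,d\mu_t$, together with the transport contributions of $W$ and $\tfrac12|\nabla^2\bm u|^2$ along $\bm B$;
\item the elastic variation in the direction $\bm v-\nabla\bm u\bm B$.
\end{itemize}
Inserting \eqref{eq:potential_thm}, all terms not involving $w$ or $\pt\bm u$ cancel against the corresponding terms of $\delta\F$. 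Applying \eqref{eq:elastic_thm} with test function $\bm v-\nabla\bm u\bm B$ (admissible since $\bm u\in\bm H^2$ and $\bm B\in\bm C^2$, with the projection onto $\bm H^1_\perp$ harmless) absorbs the remaining elastic terms, including the $\nabla^2\bm u\,\bm B$ term. What remains is the identity
\begin{equation*}
\delta\F[\bm B,\bm v]=\int_\Omega\chi\,\div(w\bm B)\dx-\big(\pt\bm u,\bm v-\nabla\bm u\bm B\big)_{\bm H^1_{\chi}}.
\end{equation*}
Since $\div\bm B$ vanishes appropriately on $\po$ for $\bm B\in\mathcal S_{\chi}$ (or by using the mass constraint), the first term equals $-\langle \bm B\cdot\nabla\chi, w\rangle$. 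It can therefore be written as a pairing in $H^{-1}_{(0)}$ against $\nabla w$.

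\textbf{Step 2: Young's inequality.} Cauchy–Schwarz and Young give
\begin{equation*}
\delta\F[\bm B,\bm v]\le \tfrac12\norm{\nabla w}_{L^2}^2+\tfrac12\norm{\bm B\cdot\nabla\chi}^2_{H^{-1}_{(0)}}+\tfrac12\norm{\pt\bm u}^2_{\bm H^1_\chi}+\tfrac12\norm{\bm v-\nabla\bm u\bm B}^2_{\bm H^1_\chi}.
\end{equation*}
Rearranging yields $\tfrac12(\norm{\nabla w}^2+\norm{\pt\bm u}^2_{\bm H^1_\chi})\ge\tfrac12[\delta\F-\tfrac12\norm{\cdot}^2-\tfrac12\norm{\cdot}^2]$ up to the normalization of constants, which should be arranged to match the statement. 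For the bound on $\norm{\nabla w}^2$ via the $H^{-1}_{(0)}$ norm, I would use that $w$ minus its mean is an admissible element of $H^1_{(0)}$, and that $\bm B\cdot\nabla\chi$ has zero mean whenever $\bm B\in\mathcal S_\chi$ preserves volume. Integrating over $(s,T)$ and inserting the result into \eqref{eq:energy_dissipation_thm} gives \eqref{eq:energy_dissipation_cor}. The measurability and $L^2$-in-time assumptions on $\bm B,\bm v$ ensure that all integrands are integrable.

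\textbf{Step 3: the supremum version.} The pointwise inequality from Step 2 holds for every admissible pair at a.e.\ $t$. Hence the supremum over the pair is also bounded by $\tfrac12(\norm{\nabla w}^2+\norm{\pt\bm u}^2)$, and \eqref{eq:energy_dissipation_cor_sup} follows once the supremum is shown to be measurable in $t$. This is the main obstacle. I would handle it by restricting the supremum to a countable dense subset of $\mathcal S_{\chi(t)}\times(\bm H^1_\perp\cap\bm H^2)$, using separability of $\bm C^2$ and $\bm H^2$ together with continuity of the functional in $(\bm B,\bm v)$. The difficulty is that $\mathcal S_{\chi(t)}$ depends on $t$. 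I would try to show that this set is generated, up to a $t$-measurable correction preserving the constraint, from a fixed countable family. The supremum is then a countable supremum of measurable functions, and the inequality passes to it.
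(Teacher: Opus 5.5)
Your route is the paper's: express $\delta\F[\bm{B},\bm{v}]$ through \eqref{eq:potential_thm} and \eqref{eq:elastic_thm} as $\int_\Omega\chi\,\div(w\bm{B})\dx-(\pt\bm{u},\bm{v}-\nabla\bm{u}\bm{B})_{\bm{H}^1_{\chi}}$. You then bound both pairings by Cauchy--Schwarz and Young; the paper does the same via the lifting $w_{\bm{B}}$ and $\tfrac12(a/b)^2\ge a-\tfrac12 b^2$. Finally you insert the result into \eqref{eq:energy_dissipation_thm} and get measurability of the supremum from a countable dense family plus a $t$-measurable volume correction. The identity you state is correct, but your justification of it is wrong, and the measurability step is left open.

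\textbf{Step 1.} You cannot test \eqref{eq:elastic_thm} with $\bm{v}-\nabla\bm{u}\bm{B}$. Since $\bm{u}(t)$ is only in $\bm{H}^2(\Omega)$, the field $\nabla\bm{u}\bm{B}$ lies in $\bm{H}^1$ but not in $\bm{H}^2$, while the equation contains $\nabla^2\bm{u}\colon\nabla^2\bm{v}$. Even formally, such a test would generate $\int_\Omega\WE(\chi,\E(\bm{u}))\colon\E(\nabla\bm{u}\bm{B})+\nabla^2\bm{u}\colon\nabla^2(\nabla\bm{u}\bm{B})\dx$, which cancels against nothing.

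The correct derivation, which is the paper's \eqref{eq:energy_dissipation_3}, has two parts:
\begin{enumerate}[label=(\roman*), nosep]
\item Test only with $\bm{v}$. This replaces $\int_\Omega\WE\colon\nabla\bm{v}+\nabla^2\bm{u}\colon\nabla^2\bm{v}\dx$ by $-(\pt\bm{u},\bm{v})_{\bm{H}^1_\chi}$.
\item The two $\C_{\nu}(\chi)\E(\pt\bm{u})$-terms of \eqref{eq:potential_thm} combine purely algebraically into $(\pt\bm{u},\nabla\bm{u}\bm{B})_{\bm{H}^1_\chi}$. This uses $\nabla(\nabla\bm{u}\bm{B})=\nabla^2\bm{u}\,\bm{B}+\nabla\bm{u}\nabla\bm{B}$ and the symmetry of $\C_{\nu}(\chi)\E(\pt\bm{u})$.
\end{enumerate}
In addition, there is no ``normalization of constants'' to arrange. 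Young gives $\tfrac12(\norm{\nabla w}_{L^2}^2+\norm{\pt\bm{u}}_{\bm{H}^1_{\chi}}^2)\ge X$, where $X=\delta\F[\bm{B},\bm{v}]-\tfrac12\norm{\bm{B}\cdot\nabla\chi}^2_{H^{-1}_{(0)}}-\tfrac12\norm{\bm{v}-\nabla\bm{u}\bm{B}}^2_{\bm{H}^1_{\chi}}$, with no factor $\tfrac12$ on $X$. The stated form follows because the left-hand side is nonnegative, hence $\ge\max\{X,0\}\ge\tfrac12 X$.

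\textbf{Step 3.} This step stops exactly where the work lies. The missing ingredient is a single field $\bm{\xi}(t)\in\bm{C}^2(\overline{\Omega})$, measurable in $t$, with $\bm{\xi}\cdot\bm{n}_{\po}=0$ and $\int_\Omega\chi(t)\div\bm{\xi}(t)\dx\ge c>0$ for a.e.\ $t$. The paper takes $\bm{\xi}=\nabla\phi_\eps$, where $\phi_\eps$ solves the Neumann problem $\Delta\phi_\eps=\chi*\rho_\eps-m_\eps$. It uses the uniform bound on $\abs{\nabla\chi(t)}(\Omega)$ to choose $\eps$ independently of $t$, as in Hensel--Stinson.

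Then $\widetilde{\bm{B}}_{\bm{\xi}}(t)=\bm{B}-\frac{\int_\Omega\chi\div\bm{B}\dx}{\int_\Omega\chi\div\bm{\xi}\dx}\bm{\xi}(t)$ maps the fixed separable space $\{\bm{B}\in\bm{C}^2(\overline{\Omega})\colon\bm{B}\cdot\bm{n}_{\po}=0\}$ onto $\mathcal{S}_{\chi(t)}$ and is the identity there. So the supremum becomes a supremum of a Carath\'eodory function over a fixed countable dense set. Measurability in $t$ of the individual terms comes from two rewritings: $\delta\F[\widetilde{\bm{B}}_{\bm{\xi}},\bm{v}]$ via \eqref{eq:potential_thm}, and $\norm{\widetilde{\bm{B}}_{\bm{\xi}}\cdot\nabla\chi}_{H^{-1}_{(0)}}$ via a time-dependent Neumann lifting. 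With these repairs your argument coincides with the paper's proof.
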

The proof of this corollary can be found in Section \ref{sec:further_prop}. 
We refer to Section \ref{sec:gradient_flow}, Section \ref{sec:mm} and \eqref{eq:variation_varifold}, where it was shown that 
\begin{align} \label{eq:first_variation_summary} 
	\delta &\F(\mu, \chi, \bm{u} )[\bm{B}, \bm{v}]\\ 
	&=
	\begin{aligned}[t]
		&\int_{\overline{\Omega} \times \mathbb{S}^{d-1}} (\bm{I} - \bm{s} \otimes \bm{s}) \colon \nabla \bm{B} \, d\mu (\bm{x}, \bm{s}) \notag
		\\ 
		&+ \int_\Omega \big[ W(\chi, \E(\bm{u})) \bm{I} -  (\nabla {\bm{u}} )^T W_{, \E} (\chi, \E(\bm{u})  \big)\big] \colon \nabla \bm{B} \dx 
		+ \int_\Omega \tfrac{1}{2} \abs{\nabla^2 \bm{u}}^2\  \div \bm{B} \dx \\ 
		&-\int_\Omega  (\nabla^2 \bm{u})_{ijk} (\nabla \bm{u})_{ip}\,(\nabla^2 \bm{B})_{pjk}  \dx 
		- \int_\Omega(\nabla^2 \bm{u})_{ijk}   (\nabla^2 \bm{u})_{ipk}\,(\nabla\bm{B})_{jp}  \dx \\
		& - \int_\Omega(\nabla^2 \bm{u})_{ijk}  (\nabla^2 \bm{u})_{ijp}\,(\nabla\bm{B})_{kp}  \dx
		+ \int_\Omega \WE (\chi, \E(\bm{u})) \colon \nabla \bm{v} \dx 
		+\int_\Omega \nabla^2 \bm{u} \colon   \nabla^2 \bm{v} \dx. 
	\end{aligned} 
\end{align}
In comparison with the optimal energy dissipation inequality in the finite-dimensional case, cf.\ \cite{steinke} and the references therein, the structural similarity becomes apparent. 
In fact, recent investigations regarding weak-strong uniqueness, see \cite{fischer2024weakstronguniquenessprinciplemullinssekerka} and the references therein, crucially depend on an optimal energy dissipation inequality. Note that in \cite{fischer2024weakstronguniquenessprinciplemullinssekerka}, the authors rely on an inequality of the form \eqref{eq:energy_dissipation_thm} rather than \eqref{eq:energy_dissipation_cor} or \eqref{eq:energy_dissipation_cor_sup}. The formulation in Corollary \ref{cor:energy_dissipation} combined with the comparison to the finite-dimensional setting highlights the underlying principle on which their argument is based.
For similar formulations in related contexts, we also refer to \cite{laux_chambolle, kubin2026veriginproblemphasetransition, hensel_stinson}. \par 
In light of this result, we now examine the connection between our formulation and the general theory of gradient flows, cf.\ \cite{ambrosio}, in particular De Giorgi's notion of metric slopes. The following lemma establishes that the quantities in \eqref{eq:energy_dissipation_cor} that depend on $\bm{B}$ and $\bm{v}$ provide a lower bound for the metric slope, cf.\ \cite[Lem.\ 2]{hensel_stinson}

\begin{lemma}[Relation to metric slopes]\label{lem:relation_metric_slope}
	Let $\chi \in BV(\Omega; \{0, 1\})$, $\mu \in M(\overline{\Omega} \times \S)$, and $\bm{u} \in \bm{H}^2(\Omega)$. Then, it holds that
	\begin{align*}
		\sup_{\bm{B}, \bm{v}} \limsup_{\tau \to 0}
		&\frac{  \big( \F(\mu, \chi, \bm{u}) - \F(\mu^\tau, \chi^\tau, (\bm{u} + \tau \bm{v})^\tau )   \big)_+ }{ \textnormal{d} \big( (\chi, \bm{u}),  (\chi^\tau, (\bm{u} + \tau \bm{v} )^\tau) \big)}\\ 
		&\geq \sup_{\bm{B}, \bm{v}} \bigg(  
		\delta \F(\mu, \chi, \bm{u} )[\bm{B}, \bm{v}] 
		- \frac{1}{2} \norm{\bm{B}\cdot  \nabla \chi}_{H^{-1}_{(0)}}^2 - \frac{1}{2}\norm{\bm{v} - \nabla \bm{u}\bm{B}}_{\bm{H}^1_{\chi}}^2 \bigg),  
	\end{align*}
	where 
	\begin{equation*}
		\textnormal{d} \big( (\chi, \bm{u}), (\tilde{\chi}, \tilde{\bm{u}})) \big)
		= \Big( \norm{\tilde{\chi} - \chi }_{H^{-1}_{(0)}}^2 + \norm{ \tilde{\bm{u}} - \bm{u}}_{\bm{H}^1_{\chi}}^2 \Big)^{\frac{1}{2}}, 
	\end{equation*}
	and the supremum is taken over all $\bm{v}\in \bm{H}^1_\perp(\Omega)\cap \bm{H}^2(\Omega)$, and $\bm{B} \in \mathcal{S}_\chi$ with an associated family of diffeomorphisms  $\Phi(\tau, \cdot)$, see Lemma \ref{lem:diffeomorphisms}, where the superscript $f^\tau$ denotes the composition $f \circ \Phi(-\tau, \cdot)$.
\end{lemma}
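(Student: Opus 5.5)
The plan is to compute, for a fixed admissible pair $(\bm{B},\bm{v})$, both the numerator and the denominator of the difference quotient to first order in $\tau$. The left-hand side then becomes an explicit expression in $\delta\F(\mu,\chi,\bm{u})[\bm{B},\bm{v}]$, $\norm{\bm{B}\cdot\nabla\chi}_{H^{-1}_{(0)}}$ and $\norm{\bm{v}-\nabla\bm{u}\bm{B}}_{\bm{H}^1_\chi}$, and the statement reduces to an elementary inequality between real numbers. Throughout, $\Phi(\tau,\cdot)$ is the flow of $\bm{B}$ from Lemma~\ref{lem:diffeomorphisms}, and $\mu^\tau$, $\chi^\tau$, $(\bm{u}+\tau\bm{v})^\tau$ are the pushforwards (compositions with $\Phi(-\tau,\cdot)$).

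\emph{Step 1 (numerator).} I will show
\[
\F(\mu,\chi,\bm{u})-\F\big(\mu^\tau,\chi^\tau,(\bm{u}+\tau\bm{v})^\tau\big)=\tau\,\delta\F(\mu,\chi,\bm{u})[\bm{B},\bm{v}]+o(\tau).
\]
For the varifold part I use the standard first-variation formula for the pushforward of an oriented varifold under $\Phi(\tau,\cdot)$. This yields the term $\int(\bm{I}-\bm{s}\otimes\bm{s})\colon\nabla\bm{B}\,d\mu$, with the boundary part $\mu^{\po}$ handled by the tangency of $\bm{B}$ on $\po$ built into $\mathcal{S}_\chi$. For the bulk terms I change variables $\bm{y}=\Phi(\tau,\bm{x})$ in $\int W(\chi^\tau,\E((\bm{u}+\tau\bm{v})^\tau))+\tfrac12|\nabla^2(\bm{u}+\tau\bm{v})^\tau|^2$. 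Note that $\chi^\tau\circ\Phi(\tau,\cdot)=\chi$, so no derivative ever falls on $\chi$. I then differentiate the Jacobian ($\div\bm{B}$), the first derivatives ($-\nabla\bm{u}\nabla\bm{B}$) and the second derivatives (the terms in $\nabla^2\bm{B}$ and $\nabla\bm{B}$) at $\tau=0$. This reproduces exactly the remaining terms of \eqref{eq:first_variation_summary}; here $\bm{u}\in\bm{H}^2$ and $\bm{B}\in\bm{C}^2$ suffice, with dominated convergence controlling the remainder.

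\emph{Step 2 (denominator).} I will show
\[
\tau^{-1}\,\textnormal{d}\big((\chi,\bm{u}),(\chi^\tau,(\bm{u}+\tau\bm{v})^\tau)\big)\to D:=\Big(\norm{\bm{B}\cdot\nabla\chi}_{H^{-1}_{(0)}}^2+\norm{\bm{v}-\nabla\bm{u}\bm{B}}_{\bm{H}^1_\chi}^2\Big)^{1/2}.
\]
For the first component, testing $\chi^\tau-\chi$ against $\zeta\in H^1_{(0)}$ and changing variables gives $\tau^{-1}(\chi^\tau-\chi)\to-\bm{B}\cdot\nabla\chi$ in $H^{-1}_{(0)}$, uniformly over $\norm{\zeta}_{H^1}\le1$. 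Mass is preserved since $\bm{B}\in\mathcal{S}_\chi$. For the second component, $\tau^{-1}\big((\bm{u}+\tau\bm{v})\circ\Phi(-\tau)-\bm{u}\big)\to\bm{v}-\nabla\bm{u}\bm{B}$ in $\bm{H}^1$, which uses $\bm{u}\in\bm{H}^2$ (difference quotients of $\nabla\bm{u}$ along the flow converge in $L^2$). Consequently, whenever $D>0$,
\[
\limsup_{\tau\to0}\frac{(\F-\F^\tau)_+}{\textnormal{d}}=\frac{(\delta\F[\bm{B},\bm{v}])_+}{D}=:s(\bm{B},\bm{v}).
\]

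\emph{Step 3 (comparison).} Write $a=\delta\F[\bm{B},\bm{v}]$. The right-hand side evaluated at $(\bm{B},\bm{v})$ is $a-\tfrac12D^2$. When it is positive, Young's inequality gives $a-\tfrac12D^2\le sD-\tfrac12D^2\le\tfrac12s^2$. Moreover, since $\mathcal{S}_\chi$ and $\bm{H}^1_\perp\cap\bm{H}^2$ are cones, the rescaled pairs $(\lambda\bm{B},\lambda\bm{v})$ are admissible and $s$ is invariant under this scaling. Hence the right-hand side equals $\sup\tfrac12 s^2$, while the left-hand side equals $\sup s$. Pairs with $D=0$ must be handled separately: then $a=0$ by Step 1 (otherwise $s=\infty$ along rescalings), so they contribute $0$ on the right.

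\emph{Main obstacle.} This last comparison is where I expect the difficulty. The steps above reduce the lemma to $\sup s\ge\tfrac12(\sup s)^2$, which holds only when $\sup s\le2$ or $\sup s=\infty$. I would therefore first re-examine the normalization of $\Phi$ in Lemma~\ref{lem:diffeomorphisms} and of the metric $\textnormal{d}$, to check whether a factor (as in \cite[Lem.~2]{hensel_stinson}, where the slope enters quadratically) makes the comparison hold as stated. The analytic content of the proof, however, lies in Steps~1 and~2.
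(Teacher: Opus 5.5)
Your Steps 1 and 2 are the same ingredients the paper uses:
\begin{itemize}
\item the first-order expansion of the energy along $(\mu^\tau,\chi^\tau,(\bm{u}+\tau\bm{v})^\tau)$;
\item the convergence $\tau^{-1}(\chi^\tau-\chi)\to-\bm{B}\cdot\nabla\chi$ in $H^{-1}_{(0)}(\Omega)$, taken from \cite[Lem.~10]{hensel_stinson};
\item the convergence $\tau^{-1}\big((\bm{u}+\tau\bm{v})^\tau-\bm{u}\big)\to\bm{v}-\nabla\bm{u}\bm{B}$ in $\bm{H}^1(\Omega)$.
\end{itemize}
For push-forwards the expansion is actually $\F-\F^\tau=-\tau\,\delta\F[\bm{B},\bm{v}]+o(\tau)$, and this sign slip appears both in your Step 1 and in the paper. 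It is harmless: both suprema run over sets invariant under $(\bm{B},\bm{v})\mapsto(-\bm{B},-\bm{v})$, and $D$ is unchanged under this map.

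The obstacle you found in Step 3 is genuine, and it lies in the statement, not in your argument. The paper's proof quotes $\tfrac12(a/b)^2\ge a-\tfrac12 b^2$ with $a=\tau^{-1}(\F-\F^\tau)_+$ and $b=\tau^{-1}\textnormal{d}$. It then asserts the unsquared bound $a/b\ge a-\tfrac12 b^2$, which does not follow (take $a=10$, $b=2$).

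Your scaling computation shows exactly what is lost. The left-hand side is $\sup s$ and the right-hand side is $\tfrac12(\sup s)^2$, so the printed lemma amounts to $\sup s\le2$ whenever the slope is finite. There is no reason for this to hold, since the two sides have different homogeneity under scaling of the energy. It fails, for instance, for $\bm{u}=0$, $\chi$ the indicator of a smooth non-spherical inclusion, and $\mu$ a large multiple of its varifold.

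Re-normalizing $\Phi$ cannot repair it. Reparametrizing $\Phi$ amounts to $(\bm{B},\bm{v})\mapsto\lambda(\bm{B},\bm{v})$, under which $s$ is invariant, as you observed.

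The intended statement, consistent with \cite[Lem.~2]{hensel_stinson} and with the De Giorgi framework where $\tfrac12|\partial\F|^2$ appears, is
\begin{align*}
&\frac12\Big(\sup_{\bm{B},\bm{v}}\limsup_{\tau\to0}\frac{\big(\F(\mu,\chi,\bm{u})-\F(\mu^\tau,\chi^\tau,(\bm{u}+\tau\bm{v})^\tau)\big)_+}{\textnormal{d}\big((\chi,\bm{u}),(\chi^\tau,(\bm{u}+\tau\bm{v})^\tau)\big)}\Big)^2\\
&\qquad\geq\sup_{\bm{B},\bm{v}}\Big(\delta\F(\mu,\chi,\bm{u})[\bm{B},\bm{v}]-\frac12\norm{\bm{B}\cdot\nabla\chi}_{H^{-1}_{(0)}}^2-\frac12\norm{\bm{v}-\nabla\bm{u}\bm{B}}_{\bm{H}^1_{\chi}}^2\Big).
\end{align*}
Your Steps 1--3 prove this version. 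In fact it already follows pair by pair from the elementary inequality, and your scaling argument additionally shows that the right-hand side equals $\tfrac12(\sup_{D>0}s)^2$.

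One small fix to your treatment of degenerate pairs. If $D=0$ but $\delta\F[\bm{B},\bm{v}]\neq0$, the left-hand side is $+\infty$ after the sign change $(\bm{B},\bm{v})\mapsto(-\bm{B},-\bm{v})$, not ``along rescalings''. Such pairs are therefore harmless.
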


For the proof, we refer to Section \ref{sec:further_prop}. 

\begin{remark}
	Hensel and Stinson \cite[Lem.\ 2]{hensel_stinson} are actually able to prove equality in the analogous relation for the pure Mullins--Sekerka system. However, they assume that the first variation of the varifold is given by a generalized mean curvature. While this can be verified in their situation using results by Schätzle \cite{schaetzle_01}, we are missing local bounds for the first variation of the varifold that obstruct the application of these in our case.  
\end{remark}

\begin{remark}[Possible results in case of homogeneous elasticity]
	In Remark~\ref{rem:integer_varifolds}, we explained why stronger results for the proposed system cannot be expected in the most general setting. This situation, however, changes significantly if we assume all elasticity coefficients to be constant, i.e.\, instead of \eqref{eq:elastic_thm} we would consider 
	\begin{equation*}
		\int_0^{T_*}\int_\Omega 
		\big[\C_{\nu} \E ( \pt \bm{u}) 
		+ \WE (\E({\bm{u}}))
		\big] 
		\colon \E(\bm{v}) 
		+ \nabla^2 {\bm{u}} \colon   \nabla^2 \bm{v} \dx \dt  = 0. 
	\end{equation*}  
	In this case, elliptic theory suggests that full $\bm{H}^4$-regularity for $\bm{u}$ is obtainable, such that the strategy of Hensel and Stinson combined with many of the computations from Section \ref{sec:gradient_flow} would entail stronger solutions in the fashion of \cite{hensel_stinson}. Consequently, we expect the techniques developed in  \cite{fischer2024weakstronguniquenessprinciplemullinssekerka} to be applicable, yielding a weak-strong uniqueness result. Note that this is consistent with the theory of diffuse interface systems with elasticity, where, to the best of our knowledge, weak–strong uniqueness results are likewise restricted to the homogeneous case.
\end{remark}

Assuming that there is no loss of perimeter in the limiting process of the minimizing movement scheme, i.e., 
\begin{equation}\label{eq:energy_conservation_hypo}
	\underset{h \searrow 0}{\textnormal{liminf }} \int_0^{T_*} \int_\Omega d\abs{\nabla \chi^h}(t)  \dt 
	\leq 
	\int_0^{T_*}\int_\Omega d\abs{\nabla \chi}(t)  \dt, 
\end{equation}
where $\chi^h$ are approximate solutions corresponding to a time step $h > 0$ in an implicit time discretization, cf.\ Section~\ref{sec:BC_solutions}, we can even prove the following, stronger, statement. 

\begin{theorem}[$BV$-solutions]\label{thm:bv_solutions}
	Let $\Omega \subset \R^d$, $d = 2, 3$, be a bounded, smooth domain. Moreover, suppose that a terminal time $T_* > 0$, and initial data $\chi_0  \in BV (\Omega; \{0, 1\})$ and $\bm{u}_0 \in \bm{H}^1_{\perp}(\Omega) \cap \bm{H}^2(\Omega)$ are given. 
	Assuming that the energy conservation hypothesis \eqref{eq:energy_conservation_hypo} holds, and $\cos \alpha = 0$, then there exist
	\begin{align*}
		\chi &\in L^\infty_{w^*}(0, T_*; BV(\Omega; \{0,1\})), \\ 
		v &\in L^2(0, T_*; H^1_{(0)}(\Omega)), \\ 
		\bm{u} &\in H^1(0, T_*; \bm{H}^1_\perp(\Omega)) \cap L^\infty(0, T_*; \bm{H}^2(\Omega)), 
	\end{align*}
	that solve the regularized visco-elastic Mullins-Sekerka system \eqref{eq:sg_sharp_bulk_ve}-\eqref{eq:sg_initial} in the following sense:\\ 
	The evolution satisfies 
	\begin{equation}\label{eq:pt_bv}
		\int_0^T \int_\Omega \nabla v \cdot \nabla \xi(t) \dx \dt 
		= \int_0^T \int_\Omega {\chi}(t) \partial_t \xi(t) \dx \dt
		+ \int_\Omega  \chi_0 \xi(0) \dx 
	\end{equation}
	for all $\xi \in C^\infty([0, T] \times \overline{\Omega})$ with $\xi(T) = 0$. Moreover, it holds that
	\begin{align*}
		& \int_\Omega \Big( \nabla \cdot \bm{B} - \tfrac{\nabla \chi}{\abs{\nabla \chi}} \cdot  \nabla \bm{B} \tfrac{\nabla \chi}{\abs{\nabla \chi}} \Big) \, d\abs{\nabla \chi}  \numberthis \label{eq:potential_bv_thm} \\ 
		& \quad =  \int_\Omega \chi \  \div \big( v \bm{B} \big) \dx  
		+  \int_\Omega \C_{\nu}(\chi)  \E(\pt \bm{u})
		\colon \nabla^2 \bm{u} \bm{B}  \dx 
		- \int_\Omega \tfrac{1}{2} \abs{\nabla^2 \bm{u}}^2\  \div \bm{B} \dx \\ 
		&\qquad -  \int_\Omega \Big[ W(\chi, \E(\bm{u})) \bm{I} -  ( \nabla \bm{u} )^T \big( \C_{\nu}(\chi)  \E (\pt \bm{u}) +  W_{, \E} (\chi, \E(\bm{u})  \big)\Big] \colon \nabla \bm{B} \dx 
		\\ 
		&\qquad + \int_\Omega  (\nabla^2 \bm{u})_{ijk} (\nabla \bm{u})_{ip}\,(\nabla^2 \bm{B})_{pjk}  \dx 
		+ \int_\Omega(\nabla^2 \bm{u})_{ijk}  \big( (\nabla^2 \bm{u})_{ipk}\,(\nabla\bm{B})_{jp} 
		+ (\nabla^2 \bm{u})_{ijp}\,(\nabla\bm{B})_{kp} \big) \dx
	\end{align*}
	for  all $\bm{B} \in \bm{C}^2(\overline{\Omega})$ with $\bm{B}_{|\partial \Omega} \cdot \bm{n}_{\partial_\Omega} = 0$ and almost all $t \in (0, T_*)$. 
	The displacement $\bm{u}$ satisfies  
	\begin{align}\label{eq:elastic_thm_bv}
		\int_0^{T_*}\int_\Omega 
		\big[\C_{\nu}(\chi) \E ( \pt \bm{u}) 
		+ \WE ({\chi}, \E({\bm{u}}))
		\big] 
		\colon \E(\bm{v}) 
		+ \nabla^2 {\bm{u}}\colon \nabla^2 \bm{v} \dx \dt  = 0
	\end{align}
	for all $\bm{v} \in L^2(0, T_*; \bm{H}^2(\Omega))$. The solution further satisfies the following dissipation inequality for almost all $T \in (0, T_*)$: 
	\begin{align}
		\F(\chi, \bm{u}) (T) 
		+ \frac{1}{2}\int_0^{T}  \norm{v}_{H^{1}}^2 
		+ \norm{\pt \bm{u}}_{\bm{H}^1_{\chi}}^2 \dt 
		\leq 
		\F(\chi, \bm{u})(0) . 
	\end{align}
\end{theorem}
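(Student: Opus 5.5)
We will obtain the $BV$-solution as a limit of the same implicit time discretization (minimizing movement scheme) that underlies Theorem~\ref{thm:varifold}. For $h>0$ and $N h = T_*$ we define $(\chi^h_n,\bm{u}^h_n)$ recursively as minimizers of
\begin{equation*}
	\F(\chi,\bm{u}) + \tfrac{1}{2h}\norm{\chi-\chi^h_{n-1}}_{H^{-1}_{(0)}}^2 + \tfrac{1}{2h}\norm{\bm{u}-\bm{u}^h_{n-1}}_{\bm{H}^1_{\chi^h_{n-1}}}^2
\end{equation*}
over $\chi\in BV(\Omega;\{0,1\})$ with prescribed mass and $\bm{u}\in\bm{H}^1_\perp(\Omega)\cap\bm{H}^2(\Omega)$. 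Since $\cos\alpha=0$, the capillary term drops out and $\F$ reduces to the perimeter plus elastic and hyperstress energies. We denote by $\chi^h,\bm{u}^h$ the piecewise constant interpolants, by $\bar{\bm{u}}^h$ the piecewise linear interpolant, and set $v^h=-(-\Delta_N)^{-1}\partial_t^h\chi^h$.

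The Euler--Lagrange equations of each step are obtained in two ways. Variations $\bm{u}+\eps\bm{v}$ give the discrete elasticity equation. Inner variations $\chi\circ\Phi(-\eps,\cdot)$, $\bm{u}\circ\Phi(-\eps,\cdot)$ along the flow of a field $\bm{B}$ with $\bm{B}\cdot\bm{n}_{\po}=0$ give the discrete Gibbs--Thomson law. In the latter, the first variation of the perimeter is $\int(\div\bm{B}-\bnu\cdot\nabla\bm{B}\bnu)\,d\abs{\nabla\chi}$, and the remaining terms are those in \eqref{eq:first_variation_summary}; the dissipation part produces $\int\chi\,\div(v\bm{B})$ and the viscous term with $\nabla\bm{u}\bm{B}$. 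All of this is available from Section~\ref{sec:mm}.

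The uniform estimates come from the discrete energy inequality. They give $\chi^h$ bounded in $L^\infty(BV)$, $\bm{u}^h$ bounded in $L^\infty(\bm{H}^2)$, $\partial_t\bar{\bm{u}}^h$ bounded in $L^2(\bm{H}^1)$, and $v^h$ bounded in $L^2(H^1)$. An $H^{-1}$-H\"older estimate in time together with interpolation in $BV\subset L^1$ then yields $\chi^h\to\chi$ strongly in $L^1((0,T_*)\times\Omega)$. Aubin--Lions gives $\bm{u}^h\to\bm{u}$ strongly in $L^2(\bm{H}^{2-\delta})$ and weakly-$*$ in $L^\infty(\bm{H}^2)$. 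With these convergences, equation \eqref{eq:pt_bv}, the linear elasticity equation \eqref{eq:elastic_thm_bv} (using strong convergence of $\C(\chi^h)$ in every $L^p$) and the dissipation inequality (by lower semicontinuity) pass to the limit directly.

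The main obstacle is passing to the limit in the Gibbs--Thomson identity \eqref{eq:potential_bv_thm}, on both sides.
\begin{itemize}
	\item \emph{Left-hand side.} The energy conservation hypothesis \eqref{eq:energy_conservation_hypo} is used exactly as in Luckhaus--Sturzenhecker. Combined with lower semicontinuity, it gives $\abs{\nabla\chi^h(t)}(\Omega)\to\abs{\nabla\chi(t)}(\Omega)$ for a.e.\ $t$ along a subsequence. Reshetnyak's continuity theorem then gives convergence of $\int(\div\bm{B}-\bnu\cdot\nabla\bm{B}\bnu)\,d\abs{\nabla\chi}$. The case $\cos\alpha=0$ matters here: no boundary mass can concentrate and contribute, because the capillary term is absent and $\bm{B}$ is tangential.
	\item \emph{Right-hand side.} The quadratic terms in $\nabla^2\bm{u}^h$ are the delicate ones, since weak convergence in $\bm{H}^2$ does not suffice. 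The first approach to try is to prove strong convergence $\nabla^2\bm{u}^h\to\nabla^2\bm{u}$ in $L^2(L^2)$. To do so, we test the discrete elasticity equation with $\bm{u}^h-\bm{u}$, pass to the limit using the limit equation, and compare, in a Minty-type argument. The viscous and elastic parts converge because $\partial_t\bar{\bm{u}}^h$ converges weakly in $L^2(\bm{H}^1)$ while $\E(\bm{u}^h)$ converges strongly. One must also control the gap between the piecewise constant and piecewise linear interpolants, using the bound $\int\norm{\bm{u}^h-\bar{\bm{u}}^h}^2\lesssim h^2$.
\end{itemize}
Once strong convergence of $\nabla^2\bm{u}^h$ is established, the products $\C_\nu(\chi^h)\E(\partial_t\bar{\bm{u}}^h)\colon\nabla^2\bm{u}^h\bm{B}$ and $(\nabla\bm{u}^h)^T\C_\nu(\chi^h)\E(\partial_t\bar{\bm{u}}^h)$ converge as weak-times-strong products. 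The term $\int\chi^h\div(v^h\bm{B})$ converges because $\chi^h$ converges strongly and $v^h$ converges weakly in $L^2(H^1)$. This yields \eqref{eq:potential_bv_thm} integrated against test functions in time, and hence for a.e.\ $t$.
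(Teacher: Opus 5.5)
Your overall route is the paper's: the same minimizing movement scheme, a priori bounds from the discrete energy inequality, and strong $\bm{H}^2$ convergence of the displacement by testing the discrete elasticity equation with the difference to the limit. The energy convergence hypothesis plus a Reshetnyak-type argument handles the curvature term. There is, however, a genuine gap in your derivation of the discrete Gibbs--Thomson law: you ignore the mass constraint.

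Your minimization is over $\chi$ with prescribed mass, and it has to be, since $\norm{\chi-\chi^h_{n-1}}_{H^{-1}_{(0)}}$ only makes sense for equal masses. So $\chi\circ\Phi(-\eps,\cdot)$ for the flow of an arbitrary tangential $\bm{B}$ is not an admissible competitor: its mass changes by $\eps\int_\Omega\chi\,\div\bm{B}\dx+O(\eps^2)$. The Euler--Lagrange equation therefore only holds for $\bm{B}\in\mathcal{S}_{\chi^h(t)}$, via the volume-preserving diffeomorphisms of Lemma~\ref{lem:diffeomorphisms}. This class depends on $h$ and $t$, so you cannot pass to the limit with a fixed test field.

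The identity you write down, with the mean-free $v^h=-(-\Delta_N)^{-1}\partial^h_t\chi^h$ and all tangential $\bm{B}$, is in fact wrong in general. Take a stationary ball with $\bm{u}\equiv\bm{0}$ and $\Tau\equiv\bm{0}$. Then \eqref{eq:pt_bv} forces $\nabla v=\bm{0}$, so a mean-free $v$ vanishes. But the left-hand side of \eqref{eq:potential_bv_thm} is a nonzero multiple of $\int_{\partial B}\bm{B}\cdot\bnu\dH$ for non-volume-preserving $\bm{B}$.

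The missing step is the Lagrange multiplier.
\begin{itemize}
\item Choose $\bm{\xi}$ with $\bm{\xi}\cdot\bm{n}_{\po}=0$, $\norm{\bm{\xi}}_{\bm{C}^2}\le C$ and $\int_\Omega\chi^h\,\div\bm{\xi}\dx\ge c(m_0,\Omega)>0$, uniformly in $h$ and $t$. This is where the perimeter bound enters; cf.\ the proof of \eqref{eq:langrange_mult_sg} and \cite[Lem.~9]{hensel_stinson}.
\item Write a tangential $\bm{B}$ as $\widetilde{\bm{B}}+c\,\bm{\xi}$ with $\widetilde{\bm{B}}\in\mathcal{S}_{\chi^h}$. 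This gives the discrete law for all tangential $\bm{B}$, with $v^h$ replaced by $v^h+\lambda^h(t)$.
\item Show that $\lambda^h$ is bounded in $L^2(0,T_*)$ uniformly in $h$; this is \eqref{estimate:lambda_implicit_bv}. It uses $\norm{\nabla v^h}_{L^2}$, the perimeter, $\norm{\bm{u}^h}_{\bm{H}^2}^2$ and $\norm{\pt\hat{\bm{u}}^h}_{\bm{H}^1}\norm{\bm{u}^h}_{\bm{H}^2}$. The last product is square integrable in time only because $\bm{u}^h$ is bounded in $L^\infty(0,T_*;\bm{H}^2(\Omega))$.
\end{itemize}
Only then does $\int_\Omega\chi^h\,\div\big((v^h+\lambda^h)\bm{B}\big)\dx$ converge by your strong--weak argument.

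The limit potential is then $v=\tilde v+\lambda\in L^2(0,T_*;H^1(\Omega))$, which is not mean-free. This does not affect \eqref{eq:pt_bv}, which only sees $\nabla v$. Likewise, the dissipation the scheme actually controls is $\norm{\nabla v}_{L^2}^2=\norm{\pt\chi}_{H^{-1}_{(0)}}^2$. With the multiplier inserted, the rest of your argument goes through as in the paper.

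A minor point: concentration of perimeter at $\po$ is ruled out by the energy convergence hypothesis, not by $\cos\alpha=0$.
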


\addtocontents{toc}{\SkipTocEntry}
\subsection*{Structure of this paper}
In Section \ref{sec:prelimiries} we start by introducing more notion along with some function spaces, and continue with a brief summary of relevant definitions and results on measure theory, $BV$-functions and the concept of oriented varifolds.\\ 
We proceed in Section~\ref{sec:gradient_flow} with the formal derivation of the regularized visco-elastic Mullins--Sekerka system as a gradient flow, where we separately introduce the kinetics and energetics, before combining both in the gradient system leading to the strong equations presented above. \\ 
Finally, Section \ref{sec:mm} is dedicated to the proofs of our main results -- Theorem~\ref{thm:varifold} and Theorem~\ref{thm:bv_solutions}. Proceeding as in \cite{hensel_stinson}, we employ a minimizing movement scheme relying on ideas from De Giorgi to construct weak solutions that satisfy our specifications. Assuming energy conservation in the limit and restricting ourselves to contact angles with $\alpha = \tfrac{\pi}{2}$, we further establish the conditional existence of $BV$-solutions in the sense of Luckhaus and Sturzenhecker \cite{sturzenhecker}; see also \cite{Bronsard_Garcke, sturzenhecker_garcke}.

\section{Notation and preliminaries} \label{sec:prelimiries}

\addtocontents{toc}{\SkipTocEntry}
\subsection*{General Notation}

In this text, bold symbols denote vector-valued quantities, while scalar-valued functions are written in standard font.
The standard inner product between two vectors $\bm{a}, \bm{b} \in \R^n$ will be denoted by $\bm{a} \cdot \bm{b}$, while the Frobenius inner product for matrices $\bm{A}, \bm{B} \in \R^{m \times n}$ is given by $\bm{A} \colon \bm{B} \coloneqq \sum_{i,j= 1}^{m, n} A_{ij} B_{ij} $. 
Moreover, we set $[\bm{a} \otimes \bm{b}]_{ij} = a_{i} b_{j}$. As usual, $\bm{A}^T, \bm{a}^T$ represent the transpose of a matrix $\bm{A} \in \R^{m \times n} $ or a vector $\bm{a} \in  \R^{n}$, respectively.\\
We will further encounter higher order tensors, i.e., elements in $\R^{3^n}$ or $\R^{4^n}$. For these, we indicate the contraction with respect to certain indices using the Einstein notation; e.g.\ for $\bm{A}  = (A_{ijkl})_{i,j,k,l= 1}^n\in \R^{n^4}$ and $\bm{B} = (B_{pjk})_{p,j,k = 1}^n \in \R^{n^3}$, 
the contraction with respect to the indices $j$ and $k$ yields a new tensor $\bm{C} =(C_{ipl})_{i, p, l= 1}^n \in \R^{n^3}$ with the pointwise definition 
\begin{equation*}
	C_{ipl}   \coloneqq A_{ijkl} B_{pjk} \coloneqq  \sum_{j,k= 1}^n A_{ijkl} B_{pjk}
\end{equation*}
In other words, we contract over the repeated indices.\\ 
For a topological vector space $V$ we denote by $V'$ its dual and ${}_{V'}\langle \cdot, \cdot \rangle_V$ represents the duality pairing. On a Hilbert space $H$ we write $(\cdot, \cdot)_H$ for the inner product. \\ 
Given a Gâteaux differentiable functional $\F \colon Y \to \R, y \mapsto F(y)$ defined on a Banach space $Y$, we denote the Gâteaux derivative at $y \in Y$ in direction $\xi$ with $\delta \F(\tilde{y})[\xi] = \delta_y \F(\tilde{y})[\xi]$. 

\addtocontents{toc}{\SkipTocEntry}
\subsection*{Function spaces}

For an open set $A \subset \R^d$ or when $A$ is the closure of an open set (i.e., $A = \overline{B}$, $B$ open), we denote by $C^0(A), C^k(A), C^\infty(A)$ the usual continuous, k-times continuously differentiable, and smooth functions, respectively, on $A$. For vector- or tensor-valued function spaces we use the bold font, e.g.\ $\bm{C}^0(A) = \bm{C}^0(A; \R^m)$, and use the subscript $c$, e.g.\ $C^0_c(A)$, for compactly supported functions in $A$.\\ 
If $A \subseteq \R^d$ is measurable, we denote the usual Lebesgue spaces by $L^p(A)$, $1 \leq p \leq \infty$, and use $\bm{L}^p(A; X)$ for strongly measurable $p$-integrable (or essentially bounded) functions, where $X$ is a Banach space (in case that $A$ is an interval of $\R$ we forgo to write $L^p$ in bold). 
Likewise, for an open set $A \subset \R^d$, we use the notation $W^{k, p}(A)$ for the standard Sobolev spaces and $\bm{W}^{k, p}(A)$ for the vector-valued variants. In the Hilbert space case, i.e., $p = 2$, we shorten the notation, setting $H^k(A) = W^{k, 2}(A)$. \\ 
The space of infinitesimally rigid deformation $\bm{H}^{1}_{\textnormal{ird}}(\Omega)$ is defined as 
\begin{align*}
	\bm{H}^{1}_{\textnormal{ird}}(\Omega) &\coloneqq 
	\big\{ 
	\begin{aligned}[t]
		\bm{u} \in \bm{H}^1(\Omega ) \colon 
		&\textnormal{there exists } \bm{b} \in \R^n  
		\textnormal{ and a skew symmetric matrix } \bm{A} \in \R^{n \times n} \\ 
		&\textnormal{ such that } \bm{u}(\bm{x}) = b + \bm{A} \bm{x}  \textnormal{ for all } \bm{x} \in \Omega
		\big\}
	\end{aligned}
	\\
	&= \big\{
	\bm{u} \in \bm{H}^1(\Omega) \colon \E(\bm{u}) = \tfrac{1}{2} \big( \nabla \bm{u} + (\nabla \bm{u})^T\big) = \bm{0}
	\big\}. 
	\numberthis \label{eq:H_ird}
\end{align*}
We denote its orthogonal complement with respect to the $\bm{H}^1$-inner product as $\bm{H}^1_{\perp}(\Omega) \coloneqq (\bm{H}^{1}_{\textnormal{ird}}(\Omega))^{\perp}$, which is closed (by definition through the orthogonal complement) and therefore again a Hilbert space. Note, that the elastic energy $W$ only depends on $\E(\bm{u})$ and not at all on the infinitesimally rigid contributions. For this space, Korn's inequality is applicable, providing a constant $C_K > 0$ such that, see \cite{ciarlet2013linear, zeidler1988nonlinear}, 
\begin{equation}\label{iq:Korn_perp}
	\norm{\bm{u}}_{\bm{H}^1} \leq C_K \norm{\E(\bm{u})}_{\bm{L}^2}\quad  \textnormal{ for all } \bm{u} \in \bm{H}^1_\perp(\Omega). 
\end{equation} 
Observe that this defines an equivalent norm on $\bm{H}^1_\perp(\Omega)$. Moreover, since the subspace $\bm{H}^1_\perp(\Omega) \subset \bm{H}^1(\Omega)$ is closed, by definition through orthogonal complement, it is in particular a Hilbert space. 

\addtocontents{toc}{\SkipTocEntry}
\subsection*{Measures} 

Let $X$ be a locally compact metric space and set $\bm{M}(X; \R^m)$ as the space of all finite $\R^m$-valued Radon measures, with $ M(X) \coloneqq \bm{M}(X; \R)$. For any $\mu \in \bm{M}(X; \R^n)$, the \textit{total variation} measure $\abs{\mu}$ satisfies, see \cite[Prop.\ 1.47]{ambrosio2000functions},  
\begin{equation*}
	\abs{\mu} (A) = \sup \Big\{
	\int_X \bm{f} (\bm{x}) \cdot  d\mu(\bm{x}) \colon \bm{f} \in \bm{C}^0_c(X; \R^m), \ \supp \bm{f} \subset A, \norm{\bm{f}}_{\infty} \leq  1
	\Big\} 
\end{equation*}
for every open set $A \subset X$, and for any arbitrary $E \subset X$ it holds that 
\begin{equation*}
	\abs{\mu}(E) = \inf \Big\{
	\abs{\mu}(A) \colon E \subset A \textnormal{ and } A \textnormal{ is open in } X 
	\Big\} . 
\end{equation*}

Let us continue with a generalized product. Here, we assume $U \subset \R^N, V \subset \R^M$ to be open sets, $\mu$ a positive Radon measure on $U$, and $x \mapsto \omega_x$ mapping all $x \in U$ to a finite $\R^m$-valued Radon measure $\omega_x$ on $V$. Moreover, assume that $x \mapsto \omega_x (W)$ is $\mu$-measurable for any open set $W \subset V$ and suppose 
\begin{equation*}
	\int_{K} \abs{\omega_x} (V) \, d\mu < \infty \quad \textnormal{for all } K \subset U \textnormal{ compact.}
\end{equation*}
Then, $\mu\otimes (\omega_x)_{x \in U}$ given by 
\begin{equation*}
	(\mu\otimes \omega_x) (B) \coloneqq \int_U \Big( \int_V \chi_B (x, y) \, d\omega_x(y) \Big) \, d\mu(x) 
	\quad 
	\textnormal{for all }B \in \mathcal{B} (K \times V), K \subset U \textnormal{ compact}, 
\end{equation*}
gives rise to an $\R^m$-valued Radon measure on $U \times V$.\\ 
Note that, following \cite[Def.\ 2.27]{ambrosio2000functions}, this result is merely stated for open sets $U$ and $V$. However, the same construction remains valid for $U$ being the closure of a bounded domain $\Omega$ and $V = \mathbb{S}^{d-1}$ if we require that $x \mapsto \omega_x (W)$ is $\mu$-measurable for any relatively open set $W \subset V$. Indeed, by extending $\mu$ by zero to a sufficiently large open neighborhood of $U$ and identifying each measure $\omega_x$ with its zero extension to $\R^d$, while setting $\omega_x = 0$ if $x \not\in U$, all assumptions are satisfied. The resulting measure is again supported on $U \times V$, and the restriction to $U \times V$ yields the desired construction. One can argue similarly if $U = (0, T_*)$ and $V = \overline{\Omega} \times \S$.

We write $(\mu_\llcorner A) (B) \coloneqq \mu (A \cap B)$ for the restriction of a measure $\mu$ to a $\mu$-measurable set and denote by $\mathcal{H}^s$, $0 \leq s \leq d$, the $s$-dimensional Hausdorff measure
on $\R^d$. Finally $\mathcal{L}^d$ denotes the $d$-dimensional Lebesgue measure. 

\begin{theorem}{(Disintegration, \cite[Thm.\ 2.28]{ambrosio2000functions})}
	\label{thm:disintegration}
	Suppose that $U \subset \R^N$, $V \subset \R^M$ are open sets, and $\mu \in \bm{M}(U \times V; \R^m)$. Define for all Borel sets $A \subset U$ the measure $\nu(A) = \abs{\mu} (A \times V)$, and further assume $\nu \in  M(U)$. Then, there exists a $\nu$-measurable function $x \mapsto \omega_x$ mapping $U$ to finite, $\R^m$-valued Radon measures with $\abs{\omega_x} (V) = 1$ for $\nu$-a.e. $x \in U$, such that it holds for all $f \in L^1((U \times V, \mu))$ that 
	\begin{align*}
		f(x, \cdot ) \in L^1((V, \abs{\omega_x})) \quad \textit{for $\nu$-a.e.\ } x \in U, \\
		x \mapsto \int_V f (x, y) \, d\omega_x(y) \in L^1((U, \nu)), \\ 
		\int_{U \times V} f(x, y) \, d\mu(x, y) = \int_U \Big( \int_V f(x, y) \, d\omega_x(y) \Big) d\nu(x). 
	\end{align*} 
	If $\mu$ is a positive measure, then $\omega_x = \abs{\omega_x}$ are probability measures $\nu$-a.e.\ in $U$. 
\end{theorem}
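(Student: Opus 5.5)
We prove the statement first for positive $\mu$ and then reduce the vector-valued case to it by polar decomposition. The central idea is to define $\omega_x$ through Radon--Nikodym derivatives of the slices $A\mapsto\int_{A\times V} g(y)\,d\mu$ with respect to $\nu$. We then assemble these derivatives into measures via the Riesz representation theorem, after discarding a single $\nu$-null set.

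\emph{Step 1 (positive case, construction).} Let $\mu\ge 0$. Since $C_c(V)$ is separable, fix a countable family $\mathcal{D}\subset C_c(V)$ that is dense in $C_0(V)$ for the sup norm and is a vector space over $\mathbb{Q}$. For $g\in\mathcal{D}$, the signed measure $\lambda_g(A):=\int_{A\times V} g\,d\mu$ satisfies $|\lambda_g|(A)\le \|g\|_\infty\,\nu(A)$. Hence $\lambda_g\ll\nu$, and by Radon--Nikodym there is a density $h_g\in L^1(U,\nu)$ with $|h_g|\le\|g\|_\infty$ $\nu$-a.e. Because $\mathcal{D}$ is countable, we can choose a single $\nu$-null set $N$ outside of which three properties hold simultaneously for all elements of $\mathcal{D}$:
\begin{itemize}
\item $g\mapsto h_g(x)$ is $\mathbb{Q}$-linear;
\item $h_g(x)\ge 0$ whenever $g\ge 0$;
\item $|h_g(x)|\le\|g\|_\infty$.
\end{itemize}
For $x\notin N$, the map $g\mapsto h_g(x)$ therefore extends by density to a positive bounded linear functional on $C_0(V)$. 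The Riesz theorem then gives a Radon measure $\omega_x$ with $\omega_x(V)\le 1$ and $\int_V g\,d\omega_x=h_g(x)$. For $x\in N$ we set $\omega_x$ to be a fixed probability measure.

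\emph{Step 2 (unit mass and measurability).} To show $\omega_x(V)=1$ $\nu$-a.e., exhaust $V$ by compacts $K_n$ and take functions $g_n\in C_c(V)$ with $0\le g_n\uparrow 1$ pointwise; we may approximate each $g_n$ within $\mathcal{D}$. Monotone convergence on both sides yields
\[
\int_A \omega_x(V)\,d\nu=\lim_n\int_A h_{g_n}\,d\nu=\lim_n\int_{A\times V}g_n\,d\mu=\nu(A)
\]
for every Borel $A$. Combined with $\omega_x(V)\le1$, this gives $\omega_x(V)=1$ $\nu$-a.e. Measurability of $x\mapsto\omega_x(W)$ for open $W$ follows because $\chi_W$ is an increasing limit of functions in $C_c(V)$ and each $x\mapsto\int g\,d\omega_x$ is $\nu$-measurable.

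\emph{Step 3 (disintegration formula).} By construction, $\int_{U\times V}\chi_A(x)g(y)\,d\mu=\int_U\chi_A\int_V g\,d\omega_x\,d\nu$ holds for $g\in\mathcal{D}$. It extends to $g\in C_c(V)$ by uniform approximation and dominated convergence, and then to $g=\chi_W$ with $W$ open by monotone convergence. The class of Borel sets $B\subset U\times V$ for which $\mu(B)=\int_U\omega_x(B_x)\,d\nu$ holds (with measurable integrand) is a Dynkin class containing the $\pi$-system of products $A\times W$, so it contains all Borel sets. Passing from indicators to simple functions and then to nonnegative functions by monotone convergence, and splitting $f=f^+-f^-$, gives the three assertions for every $f\in L^1(\mu)$. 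The integrability of $f(x,\cdot)$ with respect to $\omega_x$ for $\nu$-a.e. $x$ follows from applying the formula to $|f|$.

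\emph{Step 4 (vector case).} For $\mu\in\bm{M}(U\times V;\R^m)$, write the polar decomposition $\mu=\bm{\sigma}|\mu|$ with $|\bm{\sigma}|=1$ $|\mu|$-a.e. Note that $\nu$ is by definition the first marginal of $|\mu|$. Disintegrate $|\mu|=\nu\otimes\tilde\omega_x$ using Steps 1--3 and set $\omega_x:=\bm{\sigma}(x,\cdot)\tilde\omega_x$. Applying the positive result to $f\bm{\sigma}$ gives the formula for $\mu$. Moreover, $|\omega_x|=|\bm{\sigma}(x,\cdot)|\,\tilde\omega_x=\tilde\omega_x$, because $|\bm{\sigma}(x,\cdot)|=1$ holds $\tilde\omega_x$-a.e. for $\nu$-a.e. $x$ (apply the formula to the indicator of $\{|\bm{\sigma}|\neq1\}$). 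Hence $|\omega_x|(V)=1$.

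The main obstacle is Step 1/2: obtaining one exceptional null set independent of the test function, and ensuring no mass escapes to the boundary of the non-compact $V$. The first is handled by working with a countable dense $\mathbb{Q}$-vector space $\mathcal{D}$; the second by the exhaustion argument in Step 2.
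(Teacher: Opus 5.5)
The paper gives no proof of this statement; it cites \cite[Thm.~2.28]{ambrosio2000functions}. Your argument is essentially the standard proof behind that citation: Radon--Nikodym densities $h_g$ indexed by a countable dense $\mathbb{Q}$-vector space, Riesz representation off a single null set, unit mass via exhaustion, a $\pi$--$\lambda$ extension, and polar decomposition for the vector case. One step, however, does not follow as you justify it.

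\textbf{Positivity in Step~1.} You assert that for $x\notin N$ the map $g\mapsto h_g(x)$ extends to a \emph{positive} functional on $C_0(V)$. Density of $\mathcal{D}$ in $C_0(V)$ does not imply that the nonnegative elements of $\mathcal{D}$ are dense in the nonnegative cone of $C_0(V)$, so positivity on $\mathcal{D}$ need not pass to the extension.

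For instance, take $V=\R^M$. The subspace $\{g\in C_c(V)\colon \int_V g\,dy=0\}$ is dense in $C_0(V)$: replace $f$ by $f-(\int f)\,R^{-M}\phi(\cdot/R)$ with $\int\phi=1$ and let $R\to\infty$. So you may choose $\mathcal{D}$ inside it. Its only nonnegative element is $0$, so your second bullet is vacuous. Nothing then prevents $h_g(x_0)=-g(y_0)$ at some $x_0$ with $\nu(\{x_0\})=0$. This choice satisfies your first and third bullets, so $x_0\notin N$ is allowed, yet Riesz yields $\omega_{x_0}=-\delta_{y_0}$.

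The repair is short; either of the following works:
\begin{enumerate}
\item Choose $\mathcal{D}$ closed under $g\mapsto g^+$, i.e.\ a countable $\mathbb{Q}$-vector lattice. Then $\|g^+-f\|_\infty\le\|g-f\|_\infty$ for $f\ge 0$ transfers positivity to the extension.
\item Let Riesz produce a signed $\omega_x$ with $|\omega_x|(V)\le 1$. Run Step~2 with dominated rather than monotone convergence to get $\omega_x(V)=1$ for $\nu$-a.e.\ $x$. Then $\omega_x(V)=1\ge|\omega_x|(V)$ forces $\omega_x^-=0$. Finally reset $\omega_x$ on the exceptional null set.
\end{enumerate}

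\textbf{Measurability in Step~3.} An $f\in L^1(\mu)$ is only $\mu$-measurable, whereas your Step~3 builds up from Borel sets and Borel functions. You should add that $f$ agrees with a Borel function off a Borel $\mu$-null set $Z$. Since $\int_U \omega_x(Z_x)\,d\nu=\mu(Z)=0$, the slices $Z_x$ are $\omega_x$-null for $\nu$-a.e.\ $x$, so this exceptional set does not affect the conclusions. The same argument with $|\mu|$ and $\tilde\omega_x$ covers the vector case.

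With these two additions, Steps~2--4 go through as you wrote them.
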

Again, this disintegration result remains valid if $U = \overline{\Omega}$ for a bounded domain $\Omega$ and $V = \mathbb{S}^{d-1}$. Indeed, by extending $\mu$ by zero to a sufficiently large open neighborhood, the theorem applies in the extended setting. Since the extended measure is still supported in $U \times V$, it follows that the corresponding $\omega_x$ are indeed supported in $V$ for $\nu$-almost all $x \in U$. Therefore, the resulting disintegration formula immediately restricts to $U \times V$. A similar argument applies in the case $U = (0, T_*)$ and $V = \overline{\Omega} \times \S$. 

\addtocontents{toc}{\SkipTocEntry}
\subsection*{BV functions and varifolds} 

Let $\Omega \subset \R^n$, be an open set. The space of functions of bounded variation, denoted by $BV(\Omega)$, consists of all functions $f \in L^1(\Omega)$, whose distributional gradient is a finite $\R^n$-valued Radon measure. In particular, we set 
\begin{align*}
	BV(\Omega) &= \{f \in  L^1(\Omega)  \colon \nabla f \in \bm{M}(\Omega, \R^n)  \},
	\quad \textnormal{with} \quad 
	\norm{f}_{BV} = \norm{f}_{L^1} + \abs{\nabla f}(\Omega), 
\end{align*}
where $\nabla f$ denotes the distributional derivative. 
Consistent with our notation, the space $BV(\Omega; \{0, 1\})$ denotes the set of functions $\chi \in BV(\Omega)$ which only attain the values in $\{0, 1\}$ almost everywhere in $\Omega$. With any measurable set $\A \subseteq \Omega$ we can associate its \textit{characteristic function} $\chi_\A$, setting $\chi_{\A}(\bm{x}) = 1$ if $\bm{x} \in \A$ and zero otherwise. If $\chi_\A \in BV(\Omega, \{0, 1\})$, then $\A$ is a so-called \textit{set of finite perimeter}, and $\abs{\nabla \chi_\A} = \mathcal{H}^{n-1}{}_\llcorner \partial^*\A$, where $\partial^* \A$ is the \textit{reduced boundary} of $\A$. In particular, the perimeter of $\A$ is given by $\abs{\nabla \chi_\A}(\Omega)$, and one obtains from the definition of $\nabla \chi_{\A}$ that 
\begin{equation}\label{eq:PI_BV}
	\int_\A \div\, \bm{\xi} \dx 
	= -\int_\Omega \bm{\xi} \cdot d\nabla \chi_\A
	= - \int_\Omega \bm{\xi} \cdot \bnu_\A \, d\abs{\nabla \chi_\A} 
	= - \int_{\partial^* \A} \bm{\xi} \cdot \bnu_\A \, d\mathcal{H}^{n-1}
\end{equation} 
for all $\bm{\xi} \in \bm{C}^1_c(\Omega)$, where $\bnu (\bm{x}) =  {\nabla \chi_\A}/ {\abs{\nabla \chi_\A}}$ coincides with the inner unit normal to $\A$ in case that the boundary is smooth. For a detailed treatment of sets of finite perimeter, we refer to \cite[Sec.\ 3.3]{ambrosio2000functions}.

\par 
\medskip 
An \textit{(oriented) varifold} on $\overline{\Omega} \subseteq \R^n$ is a positive Radon measure $V \in M(\overline{\Omega} \times \mathbb{S}^{n-1})$, where $\mathbb{S}^{n-1}$ denotes the unit sphere in $\R^n$. Associated to the varifold is its mass measure $\abs{V} \in M(\overline{\Omega})$ defined by the expression $\abs{V} (U) \coloneqq V ( U  \times \mathbb{S})$ for any measurable set $U \subset\overline{\Omega}$. The \textit{first (tangential) variation} of a varifold $\mu$ (or rather its mass measure) is defined as, cf.\ \cite{allard, hensel_stinson},
\begin{equation}\label{eq:variation_varifold}
	\langle \delta V , \bm{B} \rangle = \int_{\overline{\Omega} \times \mathbb{S}^{n-1}}   (\bm{I} - \bm{s} \otimes \bm{s}) \colon \nabla \bm{B} \, dV(\bm{x, \bm{s}}) 
	\quad \textnormal{for all } \bm{B} \in \bm{C}^1(\overline{\Omega}) \textnormal{ with } \bm{B}_{|\partial\Omega} \cdot \bm{n}_{\partial\Omega} = 0. 
\end{equation}

\section{Derivation of the system} \label{sec:gradient_flow} 

The aim of this section is to  derive the visco-elastic Mullins--Sekerka system \eqref{eq:sg_sharp_bulk_ve}-\eqref{eq:sg_initial} as a (formal) gradient flow of the energy \eqref{eq:energy_ms_regularized}, which is, mentioned previously, obtained by passing to the $\Gamma$-limit in the energy of the Cahn--Larché system \cite{garcke_00}, with further augmentation by the regularizing hyperstress $\F_{\mathfrak{h}}$, and a capillary term $\F_{cap}$.\\ 
We begin by introducing the state space and defining the geometry. In analogy with the visco-elastic Cahn--Larché system, we prescribe a $H^{-1}_{(0)}$-type metric with respect to the characteristic function $\chi$, along with a $\bm{H}^1$-type metric, depending on the state $\chi$, with respect to the displacement function $\bm{u}$. Afterwards, we examine the energy functional in detail and compute its first variation with respect to the arguments $(\chi, \bm{u})$.\\ 
Subsequently, we combine kinetics and energetics to formulate the gradient flow equation, such that, after lengthy computations, we finally derive the strong formulation presented above.    

\subsection{Kinetics}\label{sec:kinetics}
For a fixed $0 < m_0 < \abs{\Omega}$, we set
\begin{gather*}
	\indic \coloneqq \Big\{ \chi \in BV(\Omega; \{0, 1\}) \colon \int_\Omega \chi \dx = m_0 \Big\}. 
\end{gather*}
Then, admissible states are elements of the formal manifold 
\begin{equation*}
	\states \coloneqq  \{ (\chi, \bm{u}) \in \indic \times \bm{H}^1_{\perp}(\Omega) \colon \bm{u} \in \bm{H}^2(\Omega) \cap \bm{H}^4(\Omega\setminus\Gamma(\chi)) \}, 
\end{equation*}
where $\Gamma = \Gamma (\chi)$, satisfying $\abs{\nabla \chi} = \mathcal{H}^{d-1} \llcorner \Gamma$, is the reduced boundary between the two subdomains 
\begin{equation*}
	\A(\chi) \coloneqq \Omega^+ \coloneqq \Omega^+(\chi) \coloneqq \{ \bm{x} \in \Omega \colon \chi(\bm{x}) = 1 \}, 
	\quad \quad 
	\Omega^- \coloneqq \Omega^-(\chi) \coloneqq \{ \bm{x} \in \Omega \colon \chi(\bm{x}) = 0 \} = \Omega \setminus \Omega^+. 
\end{equation*}
Keep in mind that the following arguments are crucially based on the assumption that $\Gamma$ is a sufficiently regular hypersurface. 
To simplify the notation, we set 
\begin{equation*}
	\disp \coloneqq  \bm{H}^1_{\perp}(\Omega) \cap \bm{H}^2(\Omega) \cap \bm{H}^4(\Omega\setminus\Gamma(\chi))
\end{equation*}
and note that by virtue of $\bm{H}^2(\Omega) \hookrightarrow \bm{C}^0(\overline{\Omega})$, the displacement $\bm{u} \in  \bm X$ cannot jump across the interface, i.e. $\jump{\bm{u}} = \bm{0}$ on $\Gamma(\chi)$. Moreover, one has the decomposition $\nabla \bm{u} = \nabla \bm{u} \bnu \otimes \bnu + \nabla_{\Gamma} \bm{u}$, where $\nabla \bm{u} \bnu$ is the normal derivative, and $\nabla_\Gamma \bm{u}$ is the (tangential) surface gradient. As $\bm{u} \in \bm{H}^2(\Omega)$, the gradient $\nabla \bm{u}$ has a trace on $\Gamma$ from both subdomains $\Omega^\pm$, and since the traces $\bm{u}^\pm$ agree, we must find that the tangential derivatives can also not jump across the interface, i.e.,  $\bjump{\nabla_\Gamma \bm{u}} = \bm{0}$. Lastly, we can use integration by parts on $\Omega$, as well as on both subdomains $\Omega^\pm$ separately; upon comparing the results, we must further conclude that the normal derivative may not jump either, i.e., $\jump{\nabla \bm{u}}\bnu = \bm{0}$. 
\\ 
If no confusion seems likely, we will neglect the dependency of $\disp, \Gamma, \Omega^\pm$ on $\chi$ in our notation.
In order to define a metric on this formal manifold, we first need to identify the tangent spaces in every point, which raises the question how to characterize curves in this setting. Obviously, we can vary the displacement by any element $\bm{v} \in \disp$ without leaving $\states$. For the indicator function $\chi$ the situation becomes much more involved. Firstly, we note that $BV(\Omega; \{0, 1\})$ -- and even more so $\indic$ -- is not closed under addition, forcing us to identify those normal velocities of $\Gamma$ which preserve the volume of $\Omega^\pm$. Secondly, we observe that after moving the boundary $\Gamma$, the displacement $\bm{u}$ will no longer satisfy $\bm{H}^4(\Omega \setminus \Gamma )$, forcing us to also account for this by allowing $\bm{u}$ to be transported along with the interface. We emphasize that this is not necessary in the purely elastic setting, where one assumes that the mechanical equilibrium is attained at a much faster time scale compared to the evolution of the phases. \\ 
Following Hensel and Stinson \cite{hensel_stinson}, we define a space of regular, $\chi$-dependent test functions
\begin{equation}\label{eq:def_S_chi}
	\mathcal{S}_{\chi} \coloneqq \Big\{ 
	\bm{B} \in \bm{C}^2(\overline{\Omega}; \R^d) \colon \int_\Omega \chi \nabla \cdot \bm{B} \dx = 0, \bm{B}_{| \partial \Omega} \cdot \bm{n}_{\partial \Omega} = 0 \Big\}, 
\end{equation}
which give rise to infinitesimal, volume preserving inner variations via a family of diffeomorphisms. 
Before stating the corresponding lemma, we want to note that for $\chi \in BV(\Omega; \{0, 1\})$ the restriction in the definition above is equivalent to, cf.\ \eqref{eq:PI_BV}, 
\begin{equation}\label{eq:normal_velocity_preserving}
	\int_\Gamma \bm{B}\cdot\bnu  \dH = 0 . 
\end{equation}

\begin{lemma}\label{lem:diffeomorphisms}
	Let $\chi \in \indic$ and $\bm{B} \in \mathcal{S}_{\chi}$. Then there exists some $\tau_0> 0$ and a family of $C^2$-diffeomorphisms $\Phi_\tau : \overline{\Omega} \rightarrow \overline{\Omega}$ depending differentiably on $\tau \in (-\tau_0, \tau_0)$, such that for all $\bm{x} \in \overline{\Omega}$ one has 
	\begin{align*}
		\Phi_0 (\bm{x}) = \bm{x}, \quad \quad \partial_\tau \Phi_\tau (\bm{x})_{|\tau = 0} = \bm{B}(\bm{x}),  \textrm{ and }\quad 
		\int_{\Omega} \chi \circ \Phi_\tau^{-1} \dx = m_0, \textrm{ for all } \tau \in (-\tau_0, \tau_0). 
	\end{align*}
\end{lemma}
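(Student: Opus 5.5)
The plan is the standard construction of Hensel and Stinson: correct the flow of $\bm{B}$ by a one-parameter perturbation along a fixed auxiliary field, and choose the parameter with the implicit function theorem so that the volume is preserved.

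\textbf{Step 1 (uncorrected flow).} Since $\bm{B} \in \bm{C}^2(\overline{\Omega})$ and $\bm{B}\cdot\bm{n}_{\po} = 0$ on $\po$, the flow of $\bm{B}$ leaves $\overline{\Omega}$ invariant. The ODE theory on the manifold with boundary $\overline{\Omega}$ gives $C^2$-diffeomorphisms of $\overline\Omega$. To see this, extend $\bm{B}$ to a $C^2$ field on $\R^d$; tangency at the smooth boundary then ensures that trajectories starting in $\overline{\Omega}$ stay there.

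\textbf{Step 2 (auxiliary field).} Choose a field $\tilde{\bm{B}} \in \bm{C}^2(\overline{\Omega})$ with $\tilde{\bm{B}}\cdot \bm{n}_{\po}=0$ and
\begin{equation*}
	\int_\Omega \chi\, \div\tilde{\bm{B}} \dx \neq 0 .
\end{equation*}
Such a field exists because $0<m_0<\abs{\Omega}$. Indeed, if $\int_\Omega \chi\,\div \bm{\eta}\dx = 0$ for all $\bm{\eta}\in\bm{C}^\infty_c(\Omega)$, then $\nabla\chi=0$. Hence $\chi$ would be constant on the connected set $\Omega$, which contradicts $\chi\in\{0,1\}$ together with $0<m_0<\abs\Omega$. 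So some compactly supported field works.

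\textbf{Step 3 (two-parameter family).} Let $\Psi(\tau,\sigma,\cdot)$ be the time-one flow of the field $\tau\bm{B}+\sigma\tilde{\bm{B}}$, or equivalently the composition of the two flows. By smooth dependence of ODE solutions on parameters, this is a family of $C^2$-diffeomorphisms of $\overline{\Omega}$, depending $C^1$ on $(\tau,\sigma)$, with $\partial_\tau\Psi|_{(0,0)}=\bm{B}$ and $\partial_\sigma\Psi|_{(0,0)}=\tilde{\bm{B}}$. Define
\begin{equation*}
	f(\tau,\sigma) \coloneqq \int_\Omega \chi\circ\Psi(\tau,\sigma,\cdot)^{-1} \dx = \int_\Omega \chi\, \abs{\det \nabla\Psi(\tau,\sigma,\cdot)} \dx .
\end{equation*}
The second expression follows from a change of variables. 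Since $\det\nabla\Psi$ is $C^1$ in the parameters, $f$ is $C^1$. Using $\partial \det\nabla\Psi = \div(\text{velocity})$ at the identity, we get $\partial_\tau f(0,0)=\int_\Omega\chi\,\div\bm{B}\dx=0$, because $\bm{B}\in\mathcal{S}_\chi$. We also get $\partial_\sigma f(0,0)=\int_\Omega\chi\,\div\tilde{\bm{B}}\dx\neq0$.

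\textbf{Step 4 (volume correction).} The implicit function theorem gives $\tau_0>0$ and a $C^1$ function $\sigma(\tau)$ with $\sigma(0)=0$ and $f(\tau,\sigma(\tau))=m_0$. Moreover,
\begin{equation*}
	\sigma'(0) = -\frac{\partial_\tau f(0,0)}{\partial_\sigma f(0,0)} = 0 .
\end{equation*}
Setting $\Phi_\tau\coloneqq\Psi(\tau,\sigma(\tau),\cdot)$, we obtain $\Phi_0=\mathrm{id}$ and volume preservation. The chain rule gives $\partial_\tau\Phi_\tau|_{\tau=0}=\bm{B}+\sigma'(0)\tilde{\bm{B}}=\bm{B}$.

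\textbf{Main obstacle.} The most delicate point is the regularity bookkeeping. We need the flow to map $\overline\Omega$ onto itself as $C^2$-diffeomorphisms, which uses invariance under tangential fields with one $C^2$ extension fixed. We also need $f$ to be genuinely $C^1$ in $(\tau,\sigma)$ even though $\chi$ is only $BV$. The second issue is handled by moving all dependence onto the Jacobian through the change of variables, so that only $\chi\in L^\infty$ is required.
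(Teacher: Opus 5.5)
Your proposal is correct and follows essentially the same route as the paper, which defers to \cite[Lem.~8]{hensel_stinson}: correct the flow of $\bm{B}$ by an auxiliary tangential field $\tilde{\bm B}$ with $\int_\Omega \chi \,\div \tilde{\bm B}\dx \neq 0$, then fix the correction parameter via the implicit function theorem applied to the volume map $f$. The paper also notes that $f\in C^2$ when $\bm B\in \bm C^2(\overline\Omega)$, which gives $C^2$ dependence on $\tau$; your $C^1$ dependence already suffices for the statement, and the one slip is your claim that the time-one flow of $\tau\bm B+\sigma\tilde{\bm B}$ is ``equivalent'' to the composition of the two flows, which is false in general since the flows need not commute, but either choice works in your argument.
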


\begin{proof}This follows analogously as in \cite[Lem.~8]{hensel_stinson} combined with the observation that for $\bm{B} \in \bm{C}^2(\overline{\Omega}, \R^n)$, the function $f$ (as defined in \cite{hensel_stinson}) is also twice continuously differentiable, such that the implicit function theorem yields a map that admits the same regularity.
\end{proof}

Let $(\chi, \bm{u}) \in \states$ and choose some $\bm{B} \in \mathcal{S}_\chi$ and $\bm{v} \in \disp$. Then, the following map defines an admissible curve on the formal manifold through the chosen base point:
\begin{equation*}
	(- \tau_0, \tau_0) \rightarrow \states, \quad \tau \mapsto (\chi,  \bm{u} + \tau \bm{v} )\circ \Phi^{-1}_\tau (\tau, \cdot), 
\end{equation*}
where $\Phi$ is as in Lemma \ref{lem:diffeomorphisms}. By considering this as a curve in $\big( (H^{1}(\Omega))', \bm{X} \big)$, we can (formally) differentiate in $\tau = 0$ and obtain the tangent vectors $(- \bm{B}_{| \Gamma} \cdot \bnu, \bm{z})$, where
\begin{equation*}
	(-\bm{B} \cdot \bnu)_{|\Gamma} = (\partial_\tau \Phi_\tau{}_{|\tau = 0} \cdot \bnu)_{|\Gamma} \in (H^{1}(\Omega))' 
	\qquad \textnormal{and} \qquad 
	\bm{z} = \bm{v} - \nabla \bm{u} \bm{B} \in \bm{H}^1(\Omega). 
\end{equation*}
These arguments can be summarized in the following definition of the (formal) tangent space 
\begin{equation*}
	T_{(\chi, \bm{u})} \states = \Big\{ (\mathcal{V}, \bm{z}) \colon \mathcal{V} \in C^2(\Gamma), 
	\bm{z} = \bm{v} - \nabla \bm{u} \bm{B} \textnormal{ with } \bm{v} \in \disp \textnormal{ and }
	\bm{B} \in \mathcal{S}_\chi \textnormal{ such that } (\bm{B} \cdot \bnu)_{|\Gamma} = \mathcal{V} \Big\}. 
\end{equation*}
Due to \eqref{eq:normal_velocity_preserving}, this definition entails that only normal velocities $\mathcal{V}$ with $\int_\Gamma \mathcal{V} \dH = 0 $
are admissible.\\ 
The metric tensor in a point $(\chi, \bm{u})$ is then defined by the following $(H^{-1}, \bm{H}^1)$-type functional in the bulk:  
\begin{align}\label{eq:metric_tensor}
	g_{(\chi, \bm{u})} \big((\mathcal{W}, \bm{z}), (\widetilde{\mathcal{W}}, \tilde{\bm{z}})  \big) \coloneqq 
	\int_\Omega \nabla w \cdot \nabla \widetilde{w} \dx  + \int_{\Omega} \C_{\nu}(\chi) \E(\bm{z}) : \E(\tilde{\bm{z}}) \dx, 
\end{align}
where $w \in H^1_{(0)}(\Omega) \cap H^2(\Omega\setminus\Gamma)$ (resp.\ $\widetilde{w}$) is the unique solution to the elliptic boundary value problem
\begin{equation*}
	\left\{ 
	\begin{aligned}
		\Delta w & = 0 && \textnormal{in } \Omega\setminus \Gamma ,\\ 
		- \jump{\nabla w} \cdot \bnu  &=    \mathcal{W}&& \textnormal{on } \Gamma,\\ 
		\jump{w}   &= 0 && \textnormal{on } \Gamma,\\ 
		\nabla w \cdot \bm{n}_{\partial \Omega} &= 0 && \textnormal{on } \partial \Omega,\\ 
		\int_\Omega w &= 0,
	\end{aligned}
	\right. 
\end{equation*}
which exists under the assumptions that $\Gamma$ (resp.\ $\tilde{\Gamma}$) is suitably regular. 
In particular, it is easy to see that for any positive definite $\C_{\bnu} \in \R^{4n}$ the metric tensor $g_{(\chi, \bm{u})} $ is a positive semi-definite bilinear form on $T_{(\chi, \bm{u})}\states$. 
\par 
Finally, we note that the first integral can be transformed via integration by parts  
\begin{equation}\label{eq:potential_inner_product}
	\int_\Omega \nabla w \cdot \nabla \widetilde{w} \dx
	= \int_{\Omega\setminus\Gamma} w (- \Delta \widetilde{w}) \dx - \int_{\Gamma} \jump{w \nabla \widetilde{w}} \cdot \bnu  \dH 
	=   \int_\Gamma   w \widetilde{\mathcal{W}} \dH . 
\end{equation}

\par 
\medskip 
\subsection{Energetics}\label{sec:energetics}
We proceed by defining an energy functional on the state space $\states$, which is composed of the perimeter of $\Omega^+(t)$, an elastic energy that includes the second gradient $\nabla^2 \bm{u}$, and a capillary energy to enforce a prescribed contact angle between the interface and the boundary. 
More precisely, we set 
\begin{align*}
	\F (\chi, \bm{u}) &\coloneqq  \F_{per}(\chi) + \F_{cap}(\chi)+ \F_{el}(\chi, \bm{u}) + \F_{\mathfrak{h}} (\bm{u})\\
	&\coloneqq
	\int_\Omega c_0 d\abs{\nabla \chi} + \int_{\partial \Omega} c_0 (\cos \alpha) \chi \dH
	+ \int_\Omega W(\chi, \E(\bm{u})) \dx + \int_\Omega \tfrac{1}{2} \abs{\nabla^2 \bm{u}}^2 \dx, 
\end{align*}
where $c_0 > 0$ is a positive constant related to the surface tension. For a sufficiently regular boundary $\Gamma$, the first integral also takes the form
\begin{equation*}
	\int_\Omega c_0 \abs{\nabla \chi} = \int_\Gamma c_0 \dH. 
\end{equation*}
We note that the chosen hyperstress potential is the simplest option possible and that a dependency on the order parameter $\chi$ could also be included.

\subsubsection*{First variation of the energy}
Before we can state the gradient flow equation, we need to compute the first variation of this energy functional with respect to the states $(\chi, \bm{u})$. \\ 
Computing the outer variation of $\F_{el}$ in direction $ \bm{v} \in \disp$ yields 
\begin{equation}\label{eq:variation_el_u}
	\delta_{\bm{u}} \F_{el}(\chi, \bm{u}) [\bm{v}] = \int_\Omega \C(\chi) \big( \E(\bm{u}) - \Tau(\chi) \big)  \colon \E(\bm{v}) \dx 
	= \int_\Omega W_{\E} (\chi, \E(\bm{u})) \colon \nabla \bm{v} \dx . 
\end{equation}
A well-known result, cf.\ e.g.\ \cite[Sec.\ 17.3]{maggi2012sets}, is that the inner variation of the perimeter is given by the mean curvature, i.e., 
\begin{equation}\label{eq:varation_per}
	\delta_\chi \F_{per}(\chi) [\bm{B}] = - \int_\Gamma c_0 \varkappa \bnu \cdot \bm{B} \dH \quad \textnormal{for all } \bm{B} \in \mathcal{S}_\chi. 
\end{equation}
Moreover the inner variation of $\F_{per} +\F_{cap}$ is given by, cf. \cite[Eq.\ (163)]{hensel_stinson},
\begin{align*}
	\delta_{\chi}  \big( \F_{per} (\chi) + \F_{cap} (\chi) \big) [\bm{B}] 
	= &- c_0  \int_\Gamma \varkappa \bnu \cdot \bm{B} \dH\\
	&+ c_0 \int_{ \partial ( \partial \A(\chi) \cap \Omega )} 
	(\bm{\tau}_{\partial \A(\chi)  \cap \Omega}  \cdot \bm{\tau}_{ \partial \A(\chi)  \cap \partial \Omega } - \cos \alpha) ( \bm{\tau}_{ \partial \A(\chi)  \cap \partial \Omega }  \cdot \bm{B} ) \, d \mathcal{H}^{d-2} . 
\end{align*}
Here, $\bm{\tau}_{ \partial \A(\chi)  \cap \Omega }$ is a vector field on the contact points manifold $\partial \overline{(\partial \A(\chi) \cap \Omega)} \subset \partial \Omega $ that is tangent to the interface $\partial \A(\chi) \cap \Omega = \Gamma(\chi)$, normal to the contact points manifold, and pointing away from $\partial \A(\chi) \cap \Omega $ (i.e.\  it is the conormal vector of $\partial \A(\chi) \cap \Omega$). Moreover, the vector field $\bm{\tau}_{ \partial \A(\chi)  \cap \partial \Omega }$ is also defined on the contact points manifold $\partial \overline{(\partial \A(\chi) \cap \Omega)} \subset \partial \Omega $, and denotes the inner conormal of $\partial \A(\chi) \cap \pO$. Therefore  it is tangent to $\A(\chi) \cap \pO$, normal $\partial \overline{\A(\chi) \cap \pO}$, and points toward $\A(\chi) \cap \pO$. Note that due to our choice of orientation, it holds that  $\bm{\tau}_{\partial \A(\chi)  \cap \Omega}  \cdot \bm{\tau}_{ \partial \A(\chi \cap \partial\Omega)} = \bnu \cdot  \bm{n}_{\partial\Omega}$. We refer to \cite[Sec.\ 19.1.2]{maggi2012sets} for the derivation of this formula. To abbreviate the notation, let us set $\bm{\tau}_{\Gamma} \coloneqq \bm{\tau}_{ \partial \A(\chi)  \cap \Omega }$ and $\bm{\tau}_{\po} \coloneqq \bm{\tau}_{ \partial \A(\chi)  \cap \partial \Omega }$ in subsequent arguments, leading to the more concise expression
\begin{align*}
	c_0 \int_{ \partial ( \partial \A(\chi) \cap \Omega )} 
	&(\bm{\tau}_{\partial \A(\chi)  \cap \Omega}  \cdot \bm{\tau}_{ \partial \A(\chi)  \cap \partial \Omega } - \cos \alpha) ( \bm{\tau}_{ \partial \A(\chi)  \cap \partial \Omega }  \cdot \bm{B} ) \, d \mathcal{H}^{d-2} \\ 
	&= c_0 \int_{ \partial \Gamma} 
	(\bnu \cdot  \bm{n}_{\partial\Omega} - \cos \alpha) ( \bm{\tau}_{\po }  \cdot \bm{B} ) \, d \mathcal{H}^{d-2} . 
\end{align*}
When computing the inner variation of $\F_{el}$, it is crucial to respect the structure of the tangent space and take into account that the displacement $\bm{u}$ is also transported along with the interface. In fact, we obtain
\begin{align*}\label{eq:variation_el_chi}
	\delta_\chi \F_{el} (\chi, \bm{u} ) [\bm{B}] 
	&=  \frac{d}{d\tau}_{|\tau= 0}  \int_\Omega W(\chi, \E( \bm{u})) \circ \Phi(- \tau, \cdot)   \dy\\
	&= \int_\Omega \big[ W(\chi, \E(\bm{u})) \bm{I} - (\nabla \bm{u})^T W_{, \E} (\chi, \E(\bm{u}) ) \big] \colon \nabla \bm{B} \dx \numberthis
\end{align*}
for all $\bm{B} \in \mathcal{S}_\chi$, cf. \cite{garcke_00}. 
Additionally, we find for all $\bm{v} \in \disp$ that 
\begin{equation}\label{eq:var_F_h_u}
	\delta_{\bm{u}} \F_{\mathfrak{h}} (\bm{u}) [\bm{v}] = \int_\Omega \nabla^2 \bm{u} \colon  \nabla^2 \bm{v} \dx. 
\end{equation}
The inner variation of $\F_{\mathfrak{h}}$, however, is much more involved, but tedious computations based on the approach in \cite{garcke_00} show that for all $\bm{B} \in \mathcal{S}_\chi$ one has
\begin{align*}\label{eq:var_F_h_chi}
	\delta_{\chi} \F_{\mathfrak{h}} (\bm{u}) [\bm{B}] &=  \frac{d}{d\tau}_{|\tau= 0}  \int_\Omega \tfrac{1}{2}\abs{\nabla^2 (\bm{u} \circ \Phi(-\tau, \cdot))}^2 \dy \numberthis\\
	&= \begin{aligned}[t]
		\int_\Omega \tfrac{1}{2} \abs{\nabla^2 \bm{u}}^2\  \div \bm{B} \dx 
		&- \int_\Omega  (\nabla^2 \bm{u})_{ijk} (\nabla \bm{u})_{ip}\,(\nabla^2 \bm{B})_{pjk}  \dx \\ 
		& - \int_\Omega(\nabla^2 \bm{u})_{ijk}  \big( (\nabla^2 \bm{u})_{ipk}\,(\nabla\bm{B})_{jp} 
		+ (\nabla^2 \bm{u})_{ijp}\,(\nabla\bm{B})_{kp} \big) \dx. 
	\end{aligned}
\end{align*}

\subsection{Gradient flow equation} 

The gradient flow equation with respect to the chosen kinetics and energetics is given by 
\begin{equation}\label{eq:gradient_flow_general}
	0 = g_{(\chi, \bm{u})} ((\mathcal{V}, \pt \bm{u}), (B, \bm{z})) + \delta \F(\chi, \bm{u})  [\bm{B}, \bm{z}] \quad
	\textnormal{for all} \quad 
	(B, \bm{z}) \in T_{(\chi, \bm{u})}\states. 
\end{equation}
Note that, by definition, we can decompose $\bm{z}$ as $\bm{v} - \nabla \bm{u} \bm{B}$ such that $\bm{B}_{|\Gamma} \cdot \bnu = B$. In particular, we are allowed to choose $\bm{v}$ and $\bm{B}$ freely, as long as the sum above is equal to $\bm{z}$. We emphasize that while these vector fields are not uniquely determined, we do not encounter problems when deriving the strong formulation, because during this process we separately set the two variables to zero. 
Moreover, we may rewrite the second part of the metric tensor as
\begin{equation*}
	\int_{\Omega} \C_{\nu} (\chi) \E(\pt \bm{u}) \colon \nabla \bm{z} \dx 
	= \int_{\Omega} \C_{\nu} (\chi) \E(\pt \bm{u}) \colon \nabla \bm{v}  \dx  
	- \int_{\Omega } \C_{\nu} (\chi) \E(\pt \bm{u}) \colon \nabla ( \nabla \bm{u} \bm{B} ) \dx . 
\end{equation*}
Inserting \eqref{eq:metric_tensor}, \eqref{eq:potential_inner_product}, \eqref{eq:variation_el_u}, \eqref{eq:varation_per}, \eqref{eq:variation_el_chi}, \eqref{eq:var_F_h_u} and \eqref{eq:var_F_h_chi} into \eqref{eq:gradient_flow_general} yields 
\begin{align*}
	0 = 
	&\int_\Gamma   w \bnu \cdot \bm{B} \dH 
	- \int_{\Omega } \C_{\nu} (\chi) \E(\pt \bm{u}) \colon \nabla ( \nabla \bm{u} \bm{B} ) \dx \\ 
	&\ -  \int_\Gamma c_0 \varkappa \bnu \cdot \bm{B} \dH
	+c_0 \int_{ \partial \Gamma} 
	(\bnu \cdot  \bm{n}_{\partial\Omega} - \cos \alpha) ( \bm{\tau}_{\po }  \cdot \bm{B} ) \, d \mathcal{H}^{d-2} \\
	&+\int_\Omega \big[ W(\chi, \E(\bm{u})) \bm{I} - (\nabla \bm{u})^T W_{, \E} (\chi, \E(\bm{u}) ) \big] \colon \nabla \bm{B} \dx \\
	&\ +\int_\Omega \tfrac{1}{2} \abs{\nabla^2 \bm{u}}^2\  \div \bm{B} \dx \\ 
	&\ - \int_\Omega  (\nabla^2 \bm{u})_{ijk} (\nabla \bm{u})_{ip}\,(\nabla^2 \bm{B})_{pjk}  \dx 
	- \int_\Omega(\nabla^2 \bm{u})_{ijk}  \big( (\nabla^2 \bm{u})_{ipk}\,(\nabla\bm{B})_{jp} + (\nabla^2 \bm{u})_{ijp}\,(\nabla\bm{B})_{kp} \big) \dx\\
	& \ + \int_{\Omega} \C_{\nu} (\chi) \E(\pt \bm{u}) \colon \nabla \bm{v}  \dx   + \int_\Omega \WE (\chi, \E(\bm{u})) \colon \nabla \bm{v} \dx 
	+\int_\Omega \nabla^2 \bm{u}\colon   \nabla^2 \bm{v} \dx, 
	\numberthis \label{eq:gradient_flow_sq}
\end{align*}
for all $(\bm{B}, \bm{v}) \in \mathcal{S}_\chi \times \disp$ such that $\bm{B}_{|\Gamma} \cdot \bnu = B$ and $\bm{z} = \bm{v} - \nabla \bm{u} \bm{B}$. Moreover, since $\C_{\nu} (\chi) \E(\pt \bm{u})$ and $ \WE (\chi, \E(\bm{u}))$ are symmetric, and the inner product of a symmetric and a skew symmetric matrix is zero, combined with the observation that the second gradient in $\nabla^2 \bm{v}$ annihilates contributions from infinitesimally rigid movements in $\bm{H}^1(\Omega) = \bm{H}^1_{\textnormal{ird}}(\Omega) \oplus \bm{H}^1_\perp(\Omega)$, we can even allow more general test function $\bm{v} \in \bm{H}^2(\Omega) \cap \bm{H}^4(\Omega \setminus \Gamma)$, removing the restriction to $\bm{H}^1_\perp(\Omega)$. \par 
\medskip 
Replicating the approach in \cite{mielke_roub}, we set $\bm{B} \equiv \bm{0}$, and obtain the equation
\begin{equation*}
	\int_{\Omega} \C_{\nu} (\chi) \E(\pt \bm{u}) \colon \nabla \bm{v}  \dx   + \int_\Omega \WE (\chi, \E(\bm{u})) \colon \nabla \bm{v} \dx 
	+\int_\Omega \nabla^2 \bm{u}\colon \nabla^2 \bm{v} \dx = 0, 
\end{equation*}
which transforms via integration by parts in both subdomains $\Omega^\pm$ to 
\begin{align*}
	\int_\Omega -& \div \Big( \C_{\nu} (\chi) \E(\pt \bm{u})   + \WE (\chi, \E(\bm{u})) - \div\, \nabla^2 \bm{u} \Big) \cdot \bm{v} \dx \\ 
	= &\int_{\partial \Omega} \Big[ \C_{\nu} (\chi) \E(\pt \bm{u})   + \WE (\chi, \E(\bm{u})) - \div\, \nabla^2 \bm{u} \Big] \bm{n}_{\po} \cdot \bm{v} \dH 
	\int_{\partial \Omega} \nabla^2 \bm{u} \bm{n}_{\po} \colon \nabla \bm{v} \dH\\ 
	&+	\int_{\Gamma} \Bjump{\C_{\nu} (\chi) \E(\pt \bm{u})   +\WE (\chi, \E(\bm{u})) - \div\, \nabla^2 \bm{u} } \bnu \cdot \bm{v} \dH +  \int_{\Gamma}  \Bjump{\nabla^2 \bm{u} \bnu \colon \nabla \bm{v}}  \dH . 
	\numberthis \label{eq:sg_elastic_1}
\end{align*}
Selecting any $\bm{v}$ that is compactly supported in $\Omega \setminus \Gamma$ now allows us to apply the fundamental lemma of the calculus of variations to deduce 
\begin{equation}\label{eq:second_el_bulk}
	- \div \Big( \C_{\nu} (\chi) \E(\pt \bm{u})   + \WE (\chi, \E(\bm{u})) - \div\, \nabla^2 \bm{u} \Big) = \bm{0} \quad  \textnormal{in } \Omega\setminus \Gamma. 
\end{equation}
Observe that we can decompose $\nabla \bm{v}$ on $\Gamma$ into the surface gradient and the contribution along the normal direction $\bnu$, i.e.\,   $\nabla \bm{v} = \nabla_{\Gamma} \bm{v} + (\nabla \bm{v} \bnu) \otimes \bnu$. Moreover, let us denote by $P_{\Gamma} : \bm{A} \mapsto \bm{A}- \bm{A}\bnu \otimes \bnu$ the projection to the tangential part, and recall the following integration by parts formula, see \cite{mielke_roub} and the references therein, 
\begin{equation*}
	\int_{\Gamma} \bm{A} \colon \nabla_{ \Gamma} \bm{v} \dH 
	= \int_{\Gamma}  (P_{\Gamma} \bm{A}) \colon \nabla_{\Gamma} \bm{v} \dH 
	= - \int_{\Gamma}  \div_{\Gamma} (P_{\Gamma} \bm{A}) \cdot \bm{v} \dH
	+ \int_{\partial \Gamma} (P_\Gamma \bm{A})  \bm{\tau}_{\Gamma} \cdot \bm{v} \, d\mathcal{H}^{d-2}. 
\end{equation*}
For $\bm{A} = \nabla^2 \bm{u} \bnu$, this entails 
\begin{align*}
	\int_{\Gamma} &\bjump{ \nabla^2 \bm{u} \bnu \colon \nabla \bm{v}}  \dH 
	=\int_{\Gamma} \bjump{ \nabla^2 \bm{u}(\bnu \otimes \bnu) \cdot \nabla \bm{v} \nu + \nabla^2 \bm{u}  \bnu \colon \nabla_{\Gamma} \bm{v}}  \dH \\  
	&
	=  \begin{aligned}[t]
		\int_{\Gamma} \jump{ \nabla^2 \bm{u}(\bnu \otimes \bnu) \cdot \nabla \bm{v} \bnu }\dH 
		&- \int_{\Gamma}  \div_{\Gamma} (P_{\Gamma} \jump{ (\nabla^2 \bm{u} \bnu } ) \cdot \bm{v} \dH\\
		&+ \int_{\partial \Gamma} P_\Gamma\jump{  \nabla^2 \bm{u}  \bnu} \bm{\tau}_\Gamma  \cdot \bm{v} \, d\mathcal{H}^{d-2}
		. 
	\end{aligned}
\end{align*}
For clarification, here, the term
\begin{equation*}
	\nabla^2 \bm{u}(\bnu \otimes \bnu) =
	\big( (\nabla^2 \bm{u})_{ijk}  (\bnu \otimes \bnu)_{jk} \big)_{i=1}^n
\end{equation*}
denotes the second normal derivative; we could also write $D^2\bm{u}[\bnu, \bnu]$ instead. Before continuing we need to make the observation that 
\begin{equation}\label{eq:H_2_jump}
	\jump{\nabla \bm{v} \bnu} \equiv \bm{0} \quad \textnormal{on} \quad \Gamma, 
\end{equation}
which follows immediately from the assumed regularity. 
Allowing only test functions $\bm{v}$ that vanish close to the boundary $\partial \Omega$, along with the fact that the bulk integral vanishes due to \eqref{eq:second_el_bulk}, the identity \eqref{eq:sg_elastic_1} therefore simplifies to 
\begin{align*}
	0 = &\int_{\Gamma} \Bjump{\C_{\nu} (\chi) \E(\pt \bm{u})   +\WE (\chi, \E(\bm{u})) - \div\, \nabla^2 \bm{u} } \bnu \cdot \bm{v} \dH\\ 
	&+ \int_{\Gamma} \jump{  \nabla^2 \bm{u}(\bnu \otimes \bnu) }   \cdot \nabla \bm{v} \bnu \dH 
	- \int_{\Gamma}  \div_{\Gamma} (P_{\Gamma} \jump{ (\nabla^2 \bm{u} \bnu } ) \cdot \bm{v} \dH. 
\end{align*}
By choosing test functions $\bm{v}$ that vanish on the interface $\Gamma$ but have arbitrary normal derivatives one deduces that 
\begin{align}\label{eq:interface_sq}
	\bjump{  \nabla^2 \bm{u}  }  (\bnu \otimes \bnu) &= \bm{0} \quad \textnormal{on } \Gamma, \quad  \textnormal{for all } i = 1, \ldots, n, 
\end{align}
which implies $P_{\Gamma} \jump{ \nabla^2 \bm{u} \bnu }= \bjump{  \big(\nabla^2 \bm{u}(\bnu \otimes \bnu) \big) \otimes \bnu}  = \jump{ \nabla^2 \bm{u} \bnu} $. 
Thus, allowing arbitrary test functions, we further obtain 	
\begin{align}\label{eq:normal_stress_jump}
	\Bjump{ \Big( \C_{\nu} (\chi) \E(\pt \bm{u})   + \WE (\chi, \E(\bm{u})) - \div\, \nabla^2 \bm{u} \Big) \bnu - \div_{\Gamma}(\nabla^2 \bm{u} \bnu) } &= \bm{0}
	\quad \textnormal{on } \Gamma. 
\end{align}
Exploiting $P_{\Gamma} \jump{ \nabla^2 \bm{u} \bnu } = \jump{\nabla^2 \bm{u} \bnu}$ and taking the trace on $\partial \Gamma$ it further follows that 
\begin{equation*}
	\int_{\partial \Gamma} P_\Gamma\jump{  \nabla^2 \bm{u}  \bnu} \bm{\tau}_\Gamma  \cdot \bm{v} \, d\mathcal{H}^{d-2}
	= \int_{\partial \Gamma} \jump{ (\nabla^2 \bm{u} \bnu)\bm{\tau}_\Gamma } \cdot \bm{v}  \, d\mathcal{H}^{d-2}. 
\end{equation*}
On the boundary $\po$, we compute analogously
\begin{align*}
	\int_{\partial \Omega} &\nabla^2 \bm{u} \bm{n}_{\po} \colon \nabla \bm{v} \dH \\ 
	&= \begin{aligned}[t]
		&\int_{\po} \nabla^2 \bm{u} (\bm n_{\partial\Omega} \otimes \bm n_{\partial\Omega}) \cdot \nabla \bm{v} \bm{n}_{\po}
		- \div_{\po} \big( P_{\po} \big(   \nabla^2 \bm{u}  \bm{n}_{\po} \big) \big) \cdot \bm{v}  \dH\\ 
		&+ \int_{\partial \Gamma \cap \po } P_{\po} \bjump{ \nabla^2 \bm{u}  \bm{n}_{\po}}  \bm{\tau}_{\po} \cdot \bm{v} \, d \mathcal{H}^{d-2} . 
	\end{aligned}
\end{align*}
Note that the integration by parts can only be done piecewise on $\po$, resulting in the jumps above. 
As an immediate consequence of \eqref{eq:sg_elastic_1} and our deductions, we find that the remaining boundary integrals must also vanish, i.e., 
\begin{align*}
	0  = &\int_{\po } \Big(\Big[ \C_{\nu} (\chi) \E(\pt \bm{u})   + \WE (\chi, \E(\bm{u})) - \div\, \nabla^2 \bm{u} \Big] \bm{n}_{\po} - \div_{\partial \Omega}(P_{\partial \Omega} (\nabla^2 \bm{u} \bm{n}_{\po}) \Big)  \cdot \bm{v}  \dH \\ 
	&+  \int_{\partial \Omega} \nabla^2 \bm{u} (\bm n_{\partial\Omega} \otimes \bm n_{\partial\Omega}) \cdot \nabla \bm{v} \bm{n}_{\po} \dH\\ 
	&+  \int_{\partial \Gamma \cap \po } P_{\po} \bjump{ \nabla^2 \bm{u}  \bm{n}_{\po}}  \bm{\tau}_{\po} \cdot \bm{v} \, d \mathcal{H}^{d-2}
	+ \int_{\partial \Gamma} \jump{ (\nabla^2 \bm{u} \bnu)\bm{\tau}_\Gamma } \cdot \bm{v}  \, d\mathcal{H}^{d-2}. 
\end{align*}
Assuming that $\partial \Gamma \subset \partial \Omega$ is a set of zero measure, we can proceed with analogous arguments as for the interface conditions, concluding that
\begin{align*}
	\nabla^2 \bm{u} (\bm n_{\partial\Omega} \otimes \bm n_{\partial\Omega}) &= \bm{0} \quad \textnormal{on } \partial \Omega,
	\numberthis \label{eq:boundary_sg}\\ 
	\Big[ \C_{\nu} (\chi) \E(\pt \bm{u})   +\WE (\chi, \E(\bm{u})) - \div\, \nabla^2 \bm{u} \Big] \bm{n}_{\po} - \div_{\partial \Omega}(\nabla^2 \bm{u}\bm{n}_{\po}) &= \bm{0}
	\quad \textnormal{on } \po \setminus (\partial \Gamma \cap \po ). \numberthis \label{eq:boundary_normal_stress}
\end{align*} 
Lastly, we obtain 
\begin{equation}\label{eq:sg_conormal}
	\bjump{ \nabla^2 \bm{u}  \bm{n}_{\po}  }\bm{\tau}_{\po} 
	+ \bjump{ \nabla^2 \bm{u} \bnu }\bm{\tau}_\Gamma
	= \bm{0}\quad \textnormal{on } \partial \Gamma \subset \po.   
\end{equation}
\par 
\medskip 
We continue with the case that $\bm{v} \equiv 0$.
In order to use similar arguments we need to allow for more general test function, not just those that are volume preserving with respect to $\Omega^\pm$. This can be achieved by introducing a Lagrange multiplier that is added to the chemical potential $w$. 
\par 
\smallskip  
\noindent
\textit{Claim:} There exists some $\lambda \in \R$ such that
\begin{align*}
	\int_\Gamma  &(w+ \lambda) \bnu \cdot \bm{B} \dH \numberthis \label{eq:langrange_mult_sg}  \\ 
	=&	\int_\Gamma c_0 \varkappa \bnu \cdot \bm{B} \dH
	-  c_0 \int_{ \partial \Gamma} 
	(\bnu \cdot  \bm{n}_{\partial\Omega} - \cos \alpha) ( \bm{\tau}_{\po }  \cdot \bm{B} ) \, d \mathcal{H}^{d-2} \\
	&-\int_\Omega \big[ W(\chi, \E(\bm{u})) \bm{I} - (\nabla \bm{u})^T \big( \C_{\nu} (\chi) \E(\pt \bm{u}) + W_{, \E} (\chi, \E(\bm{u}) ) \big)\big] \colon \nabla \bm{B} \dx \\
	&+\int_{\Omega } \C_{\nu} (\chi) \E(\pt \bm{u}) \colon \nabla^2 \bm{u} \bm{B} \dx -\int_\Omega \tfrac{1}{2} \abs{\nabla^2 \bm{u}}^2\  \div \bm{B} \dx \\ 
	&+ \int_\Omega
	(\nabla^2 \bm u)_{ijk}\,(\nabla \bm u)_{ip}\,(\nabla^2 \bm B)_{pjk}
	\, dx
	+
	\int_\Omega
	(\nabla^2 \bm u)_{ijk}
	\Big(
	(\nabla^2 \bm u)_{ipk}\,(\nabla \bm B)_{jp}
	+
	(\nabla^2 \bm u)_{ijp}\,(\nabla \bm B)_{kp}
	\Big)
	\dx
\end{align*}
for all $\bm{B} \in C^2(\overline{\Omega})$ with $\bm{B}_{|\partial \Omega} \cdot \bm{n}_{\partial\Omega} = 0$.
\par 
\smallskip  
Before we begin with the proof, let us remark that for a sufficiently smooth $\Gamma$ one has
\begin{equation}\label{eq:radon_identity}
	\int_\Gamma  w \bnu \cdot \bm{B} \dH = \int_\Omega w \bm{B} \cdot \bnu \, d\abs{\nabla \chi} 
	= -  \int_\Omega \chi \nabla \cdot ( w \bm{B}) \dx. 
\end{equation} 
\noindent
\textit{Proof of claim:} Let $\bm{B} \in \bm{C}^2(\overline{\Omega})$ with $\bm{B}_{|\partial \Omega} \cdot \bm{n}_{\partial\Omega} = 0$. Observe that for all $\bm{\xi} \in C^\infty(\overline{\Omega})$ with $\int_\Omega \chi \nabla \cdot \bm{\xi} \dx \neq 0$ and $\bm{\xi}_{|\partial \Omega} \cdot \bm{n}_{\partial\Omega} = 0$ the function 
\begin{equation}\label{eq:B_C^1_in_S}
	\widetilde{\bm{B}} \coloneqq \bm{B} - \bm{\xi}\  \frac{ \int_\Omega \chi \nabla \cdot \bm{B} \dx} { \int_\Omega \chi \nabla \cdot \bm{\xi} \dx}
\end{equation}
is well-defined and satisfies $\widetilde{\bm{B}}\in \mathcal{S}_{\chi}$. In particular, it is an admissible test function in \eqref{eq:gradient_flow_sq}. Using the identity \eqref{eq:radon_identity}, it follows that 
\begin{align*}
	- \int_\Omega &\chi \nabla \cdot ( w \bm{B}) \dx 
	+ \tilde{\lambda}  \int_\Omega \chi \nabla \cdot \bm{B} \dx \\ 
	=& \int_\Gamma c_0 \varkappa \bnu \cdot \bm{B} \dH
	-  c_0 \int_{ \partial \Gamma} 
	(\bnu \cdot  \bm{n}_{\partial\Omega} - \cos \alpha) ( \bm{\tau}_{\po }  \cdot \bm{B} ) \, d \mathcal{H}^{d-2} \\
	&-\int_\Omega \big[ W(\chi, \E(\bm{u})) \bm{I} - (\nabla \bm{u})^T \big( \C_{\nu} (\chi) \E(\pt \bm{u}) + W_{, \E} (\chi, \E(\bm{u}) ) \big)\big] \colon \nabla \bm{B} \dx \\
	&+ \int_{\Omega } \C_{\nu} (\chi) \E(\pt \bm{u}) \colon \nabla^2 \bm{u} \bm{B} \dx -\int_\Omega \tfrac{1}{2} \abs{\nabla^2 \bm{u}}^2\  \div \bm{B} \dx \\ 
	&+ \int_\Omega
	(\nabla^2 \bm u)_{ijk}\,(\nabla \bm u)_{ip}\,(\nabla^2 \bm B)_{pjk}
	\, dx
	+
	\int_\Omega
	(\nabla^2 \bm u)_{ijk}
	\Big(
	(\nabla^2 \bm u)_{ipk}\,(\nabla \bm B)_{jp}
	+
	(\nabla^2 \bm u)_{ijp}\,(\nabla \bm B)_{kp}
	\Big)
	\dx
\end{align*}
where 
\begin{align*}
	\tilde{\lambda}& \int_\Omega \chi \nabla \cdot \bm{\xi} \dx\\ 
	=& \int_\Omega  \chi \nabla \cdot (w \bm{\xi}) \dx
	+ \int_\Gamma c_0 \varkappa \bnu \cdot \bm{\xi} \dH 
	- c_0 \int_{ \partial \Gamma} 
	(\bnu \cdot  \bm{n}_{\partial\Omega} - \cos \alpha) ( \bm{\tau}_{\po }  \cdot \bm{\xi} ) \, d \mathcal{H}^{d-2} \\ 
	&- \int_\Omega \big[ W(\chi, \E(\bm{u})) \bm{I} - (\nabla \bm{u})^T \big(  \C_{\nu} (\chi) \E(\pt \bm{u}) + W_{, \E} (\chi, \E(\bm{u}) ) \big)  \big]  \colon \nabla \bm{\xi} \dx\\ 
	&+ \int_\Omega \C_\nu \E(\pt \bm{u}) \colon \nabla^2 \bm{u} \bm{\xi} \dx-\int_\Omega \tfrac{1}{2} \abs{\nabla^2 \bm{u}}^2\  \div \bm{\xi} \dx \\ 
	&+ \int_\Omega
	(\nabla^2 \bm u)_{ijk}\,(\nabla \bm u)_{ip}\,(\nabla^2 \bm \xi)_{pjk}
	\, dx
	+
	\int_\Omega
	(\nabla^2 \bm u)_{ijk}
	\Big(
	(\nabla^2 \bm u)_{ipk}\,(\nabla \bm \xi)_{jp}
	+
	(\nabla^2 \bm u)_{ijp}\,(\nabla \bm \xi)_{kp}
	\Big)
	\dx
\end{align*}
and we obtain \eqref{eq:langrange_mult_sg} for $\lambda = -\tilde{\lambda}$. It remains to choose a suitable $\bm{\xi}$ such that $\int_\Omega \chi \nabla \cdot \bm{\xi} \dx \neq 0$ and $\bm{\xi}_{|\partial \Omega} \cdot \bm{n}_{\partial\Omega} = 0$. For a rigorous proof, we refer to \cite[Proof of Lem.\ 9]{hensel_stinson}. Note that Hensel and Stinson there only derive bounds for $\bm{\xi}$ in $\bm{C}^1(\overline{\Omega})$; by using either higher order Schauder estimates or elliptic regularity, one can further obtain bounds for $\bm{\xi}$ in $\bm{C}^2(\overline{\Omega})$.
\hfill$\diamondsuit$\\ 
Using integration by parts, one further finds 
\begin{align*}
	-&\int_\Omega \big[ W(\chi, \E(\bm{u})) \bm{I} - (\nabla \bm{u})^T \big(  \C_{\nu} (\chi) \E(\pt \bm{u}) + W_{, \E} (\chi, \E(\bm{u}) ) \big)  \big] \colon \nabla \bm{B} \dx  \\ 
	&= \int_\Gamma \jump{W(\chi, \E(\bm{u})) \bm{I} - (\nabla \bm{u})^T \big(  \C_{\nu} (\chi) \E(\pt \bm{u}) + W_{, \E} (\chi, \E(\bm{u}) ) \big) } \bnu \cdot \bm{B} \dH\\ 
	&\quad +\int_{\po} \Big( W(\chi, \E(\bm{u})) \bm{I} - (\nabla \bm{u})^T \big(  \C_{\nu} (\chi) \E(\pt \bm{u}) + W_{, \E} (\chi, \E(\bm{u}) ) \big)\Big)  \bm{n}_{\po} \cdot \bm{B} \dH\\ 
	& \quad + \int_{\Omega\setminus\Gamma}  \nabla \cdot \big[ W(\chi, \E(\bm{u})) \bm{I} - (\nabla \bm{u})^T \big(  \C_{\nu} (\chi) \E(\pt \bm{u}) + W_{, \E} (\chi, \E(\bm{u}) ) \big)  \big] \cdot  \bm{B} \dx ,
\end{align*}
where the divergence in $\Omega\setminus\Gamma$ given by
\begin{align*}
	&\nabla \cdot \big[ W(\chi, \E(\bm{u})) \bm{I} - (\nabla \bm{u})^T \big(  \C_{\nu} (\chi) \E(\pt \bm{u}) + W_{, \E} (\chi, \E(\bm{u}) ) \big)  \big] \\ 
	&\quad = \nabla W(\chi, \E(\bm{u})) - \nabla (\nabla \bm{u})^T \WE(\chi, \E(\bm{u}))  
	- (\nabla \bm{u})^T  \nabla \cdot \big(  \C_{\nu} (\chi) \E(\pt \bm{u}) + W_{, \E} (\chi, \E(\bm{u}) ) \big) \\ 
	& \qquad - \nabla (\nabla \bm{u})^T  (\C_{\nu} (\chi) \E(\pt \bm{u}) ) . \numberthis \label{eq:computation_div}
\end{align*}
\par  
\noindent
Assume that $\bm{W}$ is a symmetric matrix. Then, it holds that 
\begin{equation*}
	(\partial_i (\nabla \bm{u})^T) \colon \bm{W}
	= \partial_i \tfrac{1}{2} \big( (\nabla \bm{u})^T + \nabla \bm{u} \big) \colon \bm{W}
	= \partial_i \E(\bm{u}) \colon \bm{W}. 
\end{equation*}
On the other hand, we compute $\partial_i W( \E(\bm{u}) ) = \WE(\E(\bm{u})) \colon \partial_i \E(\bm{u})$, and noting that $\WE$ is a symmetric matrix, it follows that
\begin{align}\label{eq:tensors_cancel}
	\nabla W(\chi, \E(\bm{u})) &- \nabla (\nabla \bm{u})^T \WE(\chi, \E(\bm{u}))   \\
	&= \big(\partial_i \E(\bm{u})\colon \WE(\chi, \E(\bm{u})) \big)_{i = 1}^d
	- \nabla (\nabla \bm{u})^T  \WE(\chi, \E(\bm{u}))  = 0. \notag
\end{align}
We also find that two more terms cancel:
\begin{equation}\label{eq:tensor_cancel_2}
	\int_{\Omega \setminus \Gamma} \C_{\nu} (\chi) \E(\pt \bm{u}) \colon \nabla^2 \bm{u} \bm{B} \dx   
	- \int_{\Omega \setminus\Gamma} \nabla (\nabla \bm{u})^T \colon (\C_{\nu} (\chi) \E(\pt \bm{u}) )  \cdot \bm{B} \dx = 0. 
\end{equation}
\par 
\noindent
In summary, we obtain that the identity \eqref{eq:langrange_mult_sg} is equivalent to 
\begin{align*}
	\int_\Gamma  & (w + \lambda) \bnu \cdot \bm{B} \dH  \numberthis \label{eq:jump_second_5}\\ 
	=&\int_\Gamma c_0 \varkappa \bnu \cdot \bm{B} \dH 
	-  c_0 \int_{ \partial \Gamma} 
	(\bnu \cdot  \bm{n}_{\partial\Omega} - \cos \alpha) ( \bm{\tau}_{\po }  \cdot \bm{B} ) \, d \mathcal{H}^{d-2} \\
	&+\int_\Gamma \bjump{W(\chi, \E(\bm{u})) \bm{I} - (\nabla \bm{u})^T \big(  \C_{\nu} (\chi) \E(\pt \bm{u}) + W_{, \E} (\chi, \E(\bm{u}) ) \big) } \bnu \cdot \bm{B} \dH \\
	&+ \int_{\po} \Big( W(\chi, \E(\bm{u})) \bm{I} - (\nabla \bm{u})^T \big(  \C_{\nu} (\chi) \E(\pt \bm{u}) + W_{, \E} (\chi, \E(\bm{u}) ) \big)\Big)  \bm{n}_{\po} \cdot \bm{B} \dH\\ 
	&- \int_\Omega (\nabla \bm{u})^T  \nabla \cdot \big(  \C_{\nu} (\chi) \E(\pt \bm{u}) + W_{, \E} (\chi, \E(\bm{u}) ) \big)\cdot \bm{B}  \dx
	-\int_\Omega \tfrac{1}{2} \abs{\nabla^2 \bm{u}}^2\  \div \bm{B} \dx \\ 
	&+ \int_\Omega
	(\nabla^2 \bm u)_{ijk}\,(\nabla \bm u)_{ip}\,(\nabla^2 \bm B)_{pjk}
	\, dx
	+
	\int_\Omega
	(\nabla^2 \bm u)_{ijk}
	\Big(
	(\nabla^2 \bm u)_{ipk}\,(\nabla \bm B)_{jp}
	+
	(\nabla^2 \bm u)_{ijp}\,(\nabla \bm B)_{kp}
	\Big)
	\dx, 
\end{align*}
and it remains to rewrite the last three terms as integrals over $\Gamma$. Integration by parts immediately yields 
\begin{align*}\label{eq:jump_second_2}
	-&\int_\Omega \tfrac{1}{2} \abs{\nabla^2 \bm{u}}^2\  \div \bm{B} \dx\\
	&= \int_\Gamma \bjump{\tfrac{1}{2} \abs{\nabla^2 \bm{u}}^2} \bnu \cdot \bm{B} \dH 
	+ \int_{\po}  \tfrac{1}{2} \abs{\nabla^2 \bm{u}}^2 \bm{n}_{\po} \cdot \bm{B} \dH  
	+ \int_\Omega \tfrac{1}{2} \nabla \abs{\nabla^2 \bm{u}}^2 \cdot  \bm{B} \dx, 
	\numberthis
\end{align*} 
and a quick computation further shows 
\begin{equation}\label{eq:jump_second_4}
	\int_\Omega \tfrac{1}{2} \nabla \abs{\nabla^2 \bm{u}}^2 \cdot  \bm{B} \dx 
	= \int_\Omega  \big( (\nabla^3 \bm{u})_{ijkp}  (\nabla^2 \bm{u})_{ijk} \big)_{p= 1}^d\cdot \bm{B} \dx. 
\end{equation}
We continue with the first term in the very last integral in \eqref{eq:jump_second_5} and calculate \\ 
\begin{align*}
	\int_\Omega &(\nabla^2 \bm u)_{ijk}
	(\nabla^2 \bm u)_{ipk}\,(\nabla \bm B)_{jp} \dx 
	\numberthis \label{eq:jump_second_1}\\ 
	& =
	-\int_\Gamma \bjump{ \big( (\nabla^2 \bm{u})_{ijk}  (\nabla^2 \bm{u} \bnu)_{ik}\big)_{j = 1}^d  } \cdot \bm{B} \dH 
	-\int_{\po} \Big(  (\nabla^2 \bm{u})_{ijk}  (\nabla^2 \bm{u})_{ipk}  \bm{n}_{\po} \Big)_{j = 1}^d \cdot \bm{B} \dH 
	\\ 
	&\, \quad - \int_\Omega \Big( (\nabla^3 \bm{u})_{ijkp}  (\nabla^2 \bm{u})_{ijk} \Big)_{p = 1}^d \cdot \bm{B} +  \Big( (\nabla^2 \bm{u})_{ijk} (\div \nabla^2\bm{u} )_{ik} \Big)_{j= 1}^d \cdot \bm{B}\dx. 
	\numberthis \label{eq:compu_gradient_nabla_2}
\end{align*}
\par \noindent
We proceed with the first integral in the last line of \eqref{eq:jump_second_5} and compute 
\begin{align*}
	&\int_\Omega (\nabla^2 \bm u)_{ijk}\,(\nabla \bm u)_{ip}\,(\nabla^2 \bm B)_{pjk} \, dx\\
	& \ \  = 
	\begin{aligned}[t]
		&-\int_\Gamma \bjump{  (\nabla \bm{u})^T (\nabla^2 \bm{u} \bnu )} \colon \nabla \bm{B} \dH 
		-\int_{\po}  \big(  (\nabla \bm{u})^T (\nabla^2 \bm{u} \bm{n}_{\po}) \big) \colon \nabla \bm{B} \dH 
		\\ 
		&- \int_\Omega \Big( (\div \nabla^2 \bm{u})_{ik}   (\nabla \bm{u})_{ip}+ (\nabla^2 \bm{u})_{ijk}  (\nabla^2 \bm{u})_{ijp} \Big)  (\nabla \bm{B})_{pk} \dx. 
	\end{aligned}
\end{align*}
Let us take a closer look at the last term, and observe that the matrix $\big((\nabla^2 \bm{u})_{ijk}  (\nabla^2 \bm{u})_{ijp} \big)_{k,p= 1}^d$ is symmetric due to the symmetry of $\nabla^2 \bm{u}$ with respect to the third index. Hence, we can transpose the matrix $\nabla \bm{B}$ and obtain 
\begin{align*}
	(\nabla^2 \bm{u})_{ijk}  (\nabla^2 \bm{u})_{ijp}  (\nabla \bm{B})_{pk}
	= (\nabla^2 \bm{u})_{ijk} (\nabla^2 \bm{u})_{ijp}  (\nabla \bm{B})_{kp} . 
\end{align*}
Moreover, it holds that 
\begin{align*}
	- &\int_\Omega (\div \nabla^2 \bm{u})_{ik}   (\nabla \bm{u})_{ip} (\nabla \bm{B})_{pk} \dx\\
	& =  \begin{aligned}[t]
		&\int_\Gamma  \bjump{ (\nabla \bm{u})^T (\div \nabla^2 \bm{u} \bnu)} \cdot  \bm{B} \dH 
		+ \int_{\po}  \big( (\nabla \bm{u})^T (\div \nabla^2 \bm{u}\bm{n}_{\po})  \big) \cdot \  \bm{B} \dH \\ 
		&+ \int_\Omega \big(  (\nabla \bm{u})^T  \div^2 \nabla^2\bm{u}  \big) \cdot \bm{B} 
		+ \big((\nabla^2 \bm{u})_{ijk} (\div \nabla^2 \bm{u})_{ik} \big)_{j = 1}^d \cdot \bm{B} \dx. 
	\end{aligned}
\end{align*}
Thus, we arrive at 
\begin{align*}
	&\int_\Omega(\nabla^2 \bm u)_{ijk}\,(\nabla \bm u)_{ip}\,(\nabla^2 \bm B)_{pjk} \dx\numberthis \label{eq:jump_second_3} \\ 
	&\quad= \begin{aligned}[t]
		&-\int_\Gamma \bjump{  (\nabla \bm{u})^T (\nabla^2 \bm{u} \bnu )} \colon \nabla \bm{B} \dH -\int_{\po}  \big(  (\nabla \bm{u})^T (\nabla^2 \bm{u} \bm{n}_{\po}) \big) \colon \nabla \bm{B} \dH \\ 
		&+ \int_\Gamma  \bjump{ (\nabla \bm{u})^T (\div \nabla^2 \bm{u} \bnu)} \cdot  \bm{B} \dH
		+ \int_{\po}  \big( (\nabla \bm{u})^T (\div \nabla^2 \bm{u}\bm{n}_{\po})  \big) \cdot \  \bm{B} \dH\\ 
		&+ \int_\Omega \big(  (\nabla \bm{u})^T  \div^2 \nabla^2\bm{u}  \big) \cdot \bm{B}
		+ \big((\nabla^2 \bm{u})_{ijk} (\div \nabla^2 \bm{u})_{ik} \big)_{j = 1}^d \cdot \bm{B}  \dx 
		- \int_\Omega (\nabla^2 \bm{u})_{ijk}  (\nabla^2 \bm{u})_{ijp}  (\nabla \bm{B})_{kp} \dx. 
	\end{aligned}
\end{align*}
\par 
Combining \eqref{eq:jump_second_2}, \eqref{eq:jump_second_4} and \eqref{eq:jump_second_1} with \eqref{eq:jump_second_3} leads to 
\begin{align*}
	-&\int_\Omega \tfrac{1}{2} \abs{\nabla^2 \bm{u}}^2\  \div \bm{B} \dx \\ 
	&+ \int_\Omega
	(\nabla^2 \bm u)_{ijk}\,(\nabla \bm u)_{ip}\,(\nabla^2 \bm B)_{pjk}
	\, dx
	+
	\int_\Omega
	(\nabla^2 \bm u)_{ijk}
	\Big(
	(\nabla^2 \bm u)_{ipk}\,(\nabla \bm B)_{jp}
	+
	(\nabla^2 \bm u)_{ijp}\,(\nabla \bm B)_{kp}
	\Big)
	\dx\\ 
	\pagebreak[0]
	&= \begin{aligned}[t]
		&\int_\Gamma \bjump{\tfrac{1}{2} \abs{\nabla^2 \bm{u}}^2} \bnu \cdot \bm{B} \dH -\int_\Gamma \bjump{ \big( (\nabla^2 \bm{u})_{ijk}  (\nabla^2 \bm{u} \bnu)_{ik}\big)_{j = 1}^d  } \cdot \bm{B} \dH  
		\\ 
		&-\int_\Gamma \bjump{  (\nabla \bm{u})^T (\nabla^2 \bm{u} \bnu )} \colon \nabla \bm{B} \dH 
		+ \int_\Gamma  \bjump{ (\nabla \bm{u})^T (\div \nabla^2 \bm{u} \bnu)} \cdot  \bm{B} \dH \\ 
		&+  \int_{\po}  \tfrac{1}{2} \abs{\nabla^2 \bm{u}}^2 \bm{n}_{\po} \cdot \bm{B} \dH  
		-\int_{\po} \Big(  (\nabla^2 \bm{u})_{ijk}  (\nabla^2 \bm{u})_{ipk}  \bm{n}_{\po} \Big)_{j = 1}^d \cdot \bm{B} \dH
		\\ 
		&-\int_{\po}  \big(  (\nabla \bm{u})^T (\nabla^2 \bm{u} \bm{n}_{\po}) \big) \colon \nabla \bm{B} \dH
		+ \int_{\po}  \big( (\nabla \bm{u})^T (\div \nabla^2 \bm{u}\bm{n}_{\po})  \big) \cdot \  \bm{B} \dH \\  
		&+\int_\Omega (\nabla \bm{u})^T  \div^2 \nabla^2\bm{u}  \big) \cdot \bm{B} 
		\dx,
	\end{aligned}
\end{align*}
which we insert into \eqref{eq:jump_second_5}, and by invoking \eqref{eq:second_el_bulk} arrive at
\begin{align*}
	&\int_\Gamma  (w + \lambda) \bnu \cdot \bm{B} \dH  \\ 
	&=\int_\Gamma c_0 \varkappa \bnu \cdot \bm{B} \dH  -  c_0 \int_{ \partial \Gamma} 
	(\bnu \cdot  \bm{n}_{\partial\Omega} - \cos \alpha) ( \bm{\tau}_{\po }  \cdot \bm{B} ) \, d \mathcal{H}^{d-2}  \\ 
	&+\int_\Gamma \bjump{ \big( \tfrac{1}{2} \abs{\nabla^2 \bm{u}}^2 + W(\chi, \E(\bm{u})) \big) \bm{I} - (\nabla \bm{u})^T \big(  \C_{\nu} (\chi) \E(\pt \bm{u}) + W_{, \E} (\chi, \E(\bm{u}) ) - \div \nabla^2 \bm{u}\big) } \bnu \cdot \bm{B} \dH\\ 
	&+\int_{\po} \Big(\big(\tfrac{1}{2} \abs{\nabla^2 \bm{u}}^2 + W(\chi, \E(\bm{u}))\big) \bm{I} - (\nabla \bm{u})^T \big(  \C_{\nu} (\chi) \E(\pt \bm{u}) + W_{, \E} (\chi, \E(\bm{u}) ) - \div \nabla^2 \bm{u}\big) \Big) \bm{n}_{\po } \cdot \bm{B} \dH\\ 
	&-\int_\Gamma \bjump{ \big( (\nabla^2 \bm{u})_{ijk}  (\nabla^2 \bm{u} \bnu)_{ik}\big)_{j = 1}^d  } \cdot \bm{B} \dH 
	-\int_{\po} \Big(  (\nabla^2 \bm{u})_{ijk}  (\nabla^2 \bm{u})_{ipk}  \bm{n}_{\po} \Big)_{j = 1}^d \cdot \bm{B} \dH
	\\ 
	&-\int_\Gamma \bjump{  (\nabla \bm{u})^T (\nabla^2 \bm{u} \bnu )} \colon \nabla \bm{B} \dH 
	-\int_{\po}  \big(  (\nabla \bm{u})^T (\nabla^2 \bm{u} \bm{n}_{\po}) \big) \colon \nabla \bm{B} \dH. 
\end{align*}
Since all remaining terms are surface integrals over the interface, we can now proceed to derive the jump conditions between the two phases.
After splitting $\nabla \bm{B} = \nabla_\Gamma \bm{B} + \nabla \bm{B} \bnu \otimes \bnu$, one obtains
\begin{align*}
	-&\int_\Gamma \bjump{  (\nabla \bm{u})^T (\nabla^2 \bm{u} \bnu )} \colon \nabla \bm{B} \dH 	\\ 
	&= -\int_\Gamma \bjump{  (\nabla \bm{u})^T (\nabla^2 \bm{u} \bnu )} \colon \nabla_\Gamma \bm{B} \dH 	
	-\int_\Gamma  \bjump{  (\nabla \bm{u})^T \nabla^2 \bm{u} (\bnu \otimes \bnu )}  \cdot \nabla \bm{B}\bnu   \dH	\\ 
	& = \begin{aligned}[t]
		&\int_\Gamma  \div_{\Gamma} \Big(  P_{\Gamma}  \bjump{    (\nabla \bm{u})^T (\nabla^2 \bm{u} \bnu ) } \Big)  \cdot \bm{B} 
		- \int_{\partial \Gamma}  P_{\Gamma}  \bjump{   (\nabla \bm{u})^T (\nabla^2 \bm{u} \bnu ) } \bm{\tau}_\Gamma   \cdot \bm{B} \, d \mathcal{H}^{d-2}\\
		&-\int_\Gamma  \bjump{  (\nabla \bm{u})^T \nabla^2 \bm{u} (\bnu \otimes \bnu )}  \cdot \nabla \bm{B}\bnu   \dH
		.
	\end{aligned}
\end{align*} 
As noted before $\jump{\nabla^2 \bm{u} (\bnu \otimes \bnu)} =\bm{0}$, see \eqref{eq:interface_sq}; likewise, the gradient $\nabla \bm{u} = \nabla \bm{u} \bnu + \nabla_{ \Gamma} \bm{u}$ cannot jump across the interface either, cf.\ Section \ref{sec:kinetics}. Hence the last integral is zero. Further recall that the projection is defined as $P_\Gamma \colon \bm{A} \mapsto \bm{A} - \bm{A} \bnu \otimes \bnu$. Then we compute 
\begin{align*}
	P_{\Gamma}  \bjump{   (\nabla \bm{u})^T (\nabla^2 \bm{u} \bnu ) } 
	&= 	\bjump{    (\nabla \bm{u})^T (\nabla^2 \bm{u} \bnu )}  - \bjump{  (\nabla \bm{u})^T \big( \nabla^2 \bm{u}  (\bnu \otimes \bnu )\big) \otimes \bnu }  = \bjump{    (\nabla \bm{u})^T (\nabla^2 \bm{u} \bnu )}, 
\end{align*}
so we can neglect the projection in this term. Thus, the middle term simplifies to 
\begin{equation*}
	\int_{\partial \Gamma}  P_{\Gamma} \bjump{(\nabla \bm{u})^T (\nabla^2 \bm{u} \bnu ) }\tau_\Gamma    \cdot \bm{B} \, d \mathcal{H}^{d-2} =  \int_{\partial \Gamma}   \bjump{(\nabla \bm{u})^T (\nabla^2 \bm{u} \bnu ) }\bm{\tau}_\Gamma    \cdot \bm{B} \, d \mathcal{H}^{d-2}. 
\end{equation*}
Since we have just shown that we can ignore the tangential projection on $\bjump{ (\nabla \bm{u})^T (\nabla^2 \bm{u} \bnu )}$, linearity of the divergence entails 
\begin{align*}
	\div_{\Gamma} \Big(  P_{\Gamma}  \bjump{    (\nabla \bm{u})^T (\nabla^2 \bm{u} \bnu )  } \Big) 
	&= 	\Bjump{ \big((\nabla_\Gamma \nabla \bm{u})_{ijk}  (\nabla^2 \bm{u} \bnu)_{ik}  \big)_{j= 1}^d+   (\nabla \bm{u})^T  \div_\Gamma (\nabla^2 \bm{u} \bnu )\big) }. 
\end{align*}
Analogous arguments for the terms on $\po$ then lead to
\begin{align*}
	\int_\Gamma  & (w + \lambda) \bnu \cdot \bm{B} \dH  \\ 
	=&\int_\Gamma c_0 \varkappa \bnu \cdot \bm{B} \dH 
	-  c_0 \int_{ \partial \Gamma} 
	(\bnu \cdot  \bm{n}_{\partial\Omega} - \cos \alpha) ( \bm{\tau}_{\po }  \cdot \bm{B} ) \, d \mathcal{H}^{d-2} \\ 
	&+\int_\Gamma \bjump{\tfrac{1}{2} \abs{\nabla^2 \bm{u}}^2 + W(\chi, \E(\bm{u}))  } \bnu \cdot \bm{B} \dH\\ 
	&-\int_\Gamma  (\nabla \bm{u})^T \Bjump{ \Big( \big(  \C_{\nu} (\chi) \E(\pt \bm{u}) + W_{, \E} (\chi, \E(\bm{u}) ) - \div \nabla^2 \bm{u}\big) \bnu -   \div_\Gamma (\nabla^2 \bm{u} \bnu)\Big) } \cdot \bm{B} \dH\\ 
	&-\int_{\po} (\nabla \bm{u})^T  \Big( \big(  \C_{\nu} (\chi) \E(\pt \bm{u}) + W_{, \E} (\chi, \E(\bm{u}) ) - \div \nabla^2 \bm{u}\big)\bm{n}_{\po } - \div_{\po} (\nabla^2 \bm{u} \bm{n}_{\po}) \Big)  \cdot \bm{B} \dH\\ 
	&+ \int_\Gamma \Bjump{  \big( ( \nabla_\Gamma \nabla \bm{u} - \nabla^2 \bm{u})_{ijk}   (\nabla^2 \bm{u} \bnu )_{ik} \big)_{j = 1}^d } \cdot \bm{B} \dH \\
	&+\int_{\po} \Big(   (\nabla_{\po} \nabla \bm{u} - \nabla^2 \bm{u})_{ijk}   (\nabla^2 \bm{u}\bm{n}_{\po} )_{ik} \Big)_{j = 1}^d  \cdot \bm{B} \dH 
	\\ 
	&
	- \int_{\partial \Gamma} (\nabla \bm{u})^T  \Big(\jump{ \nabla^2 \bm{u} \bm{n}_{\po}}\bm{\tau}_{\po}  + \jump{ \nabla^2 \bm{u} \bnu }\bm{\tau}_{\Gamma}  \Big) \cdot \bm{B} \, d\mathcal{H}^{d-2}
	\numberthis \label{eq:potential_derivation}
\end{align*}
for all $\bm{B} \in \bm{C}^2(\overline{\Omega})$ with $\bm{B}_{|\po} \cdot \bm{n}_{\po}  = 0$. By a quick computation, one further finds  that 
\begin{equation}\label{eq:sg_normal_boundary}
	\Big(   (\nabla_{\po} \nabla \bm{u} - \nabla^2 \bm{u})_{ijk}   (\nabla^2 \bm{u}\bm{n}_{\po} )_{ik} \Big)_{j = 1}^d  \cdot \bm{B}
	= - \big( (\nabla^2 \bm{u} \bm{n}_{\po}) \bm{B} \big) \cdot \big(  \nabla^2 \bm{u}  (\bm{n}_{\po} \otimes \bm{n}_{\po} )  \big). 
\end{equation}
Along with \eqref{eq:boundary_sg}, this implies 
\begin{align*}
	\int_{\po} &\Big(   (\nabla_{\po} \nabla \bm{u} - \nabla^2 \bm{u})_{ijk}   (\nabla^2 \bm{u}\bm{n}_{\po} )_{ik} \Big)_{j = 1}^d  \cdot \bm{B}  \dH \\ 
	&= \int_{\po} - (\nabla^2 \bm{u} \bm{n}_{\po}) \bm{B}  \cdot \big(  \nabla^2 \bm{u}(\bm{n}_{\po} \otimes \bm{n}_{\po} )  \big)  \dH  = 0. 
\end{align*}
Analogously, but relying on \eqref{eq:interface_sq}, we deduce 
\begin{align*}
	\int_\Gamma &\Bjump{  \big( ( \nabla_\Gamma \nabla \bm{u} - \nabla^2 \bm{u})_{ijk}   (\nabla^2 \bm{u} \bnu )_{ik} \big)_{j = 1}^d } \cdot \bm{B} \dH = \int_\Gamma \Bjump{ \nabla^2 \bm{u} \bnu}  \bm{B}  \cdot \big(  \nabla^2 \bm{u} (\bnu \otimes \bnu )  \big)  \dH = 0,
\end{align*}
since we have already shown in \eqref{eq:interface_sq} that the normal part of $\nabla^2 \bm{u}$ has no jump across the interface.
Together with \eqref{eq:normal_stress_jump}, \eqref{eq:boundary_normal_stress}, \eqref{eq:sg_conormal}, the identity \eqref{eq:potential_derivation} simplifies to
\begin{align*}
	\int_\Gamma   (w + \lambda) \bnu \cdot \bm{B} \dH  
	&+  c_0 \int_{ \partial \Gamma} 
	(\bnu \cdot  \bm{n}_{\partial\Omega} - \cos \alpha) ( \bm{\tau}_{\po }  \cdot \bm{B} ) \, d \mathcal{H}^{d-2} \\
	&=\int_\Gamma  \Big( c_0 \varkappa \bnu 
	+ \bjump{\tfrac{1}{2} \abs{\nabla^2 \bm{u}}^2 + W(\chi, \E(\bm{u}))  } \bnu \Big) \cdot \bm{B} \dH. 
\end{align*}
Suppose that $\mathcal{B} \in C^1_c(\Gamma)$ and denote by $\bm{\mathcal{B}} \in \bm{C}^1_c(\Omega, \R^n)$ a vector field satisfying $\bm{\mathcal{B}}(\bm{x}) = \mathcal{B}(\bm{x})\bnu$ for all $\bm{x} \in \Gamma$. Moreover, suppose that this extension is (in a neighborhood of $\Gamma$) constant in normal direction, i.e., $\nabla \bm{\mathcal{B}} \bnu = \bm{0}$. Since $\bm{\mathcal{B}}$ is an admissible test function, we deduce by the fundamental lemma of the calculus of variations the jump condition
\begin{alignat*}{1}
	\tilde{w} =
	c_0 \varkappa  
	+ \bjump{\tfrac{1}{2} \abs{\nabla^2 \bm{u}}^2 + W(\chi, \E(\bm{u}))  }  
\end{alignat*}
on $\Gamma$, where $\tilde{w} = w + \lambda$. Moreover, we further obtain that 
\begin{equation*}
	\cos \alpha = \bnu \cdot \bm{n}_{\po} \quad \textnormal{on } \partial \Gamma. 
\end{equation*}

\section{Implicit time discretization} \label{sec:mm}
\noindent
Before we start with the construction of solutions, let us recall some of the notation. We write
\begin{gather*}
	\indic \coloneqq \Big\{ \chi \in BV(\Omega; \{0, 1\}) \colon \int_\Omega \chi \dx = m_0 \Big\}
\end{gather*}
for the space of admissible characteristic functions, and 
\begin{equation*}
	\disp  \coloneqq \bm{H}^1_{\perp}(\Omega) \cap \bm{H}^2(\Omega)
\end{equation*}
for all admissible displacements. Combined, these define the state space $\states \coloneqq   \indic \times \disp$, that can be seen as a formal manifold. \\ 
Assuming that the surface tension coefficient $c_0$ is equal to $1$, we consider the energy functional 
\begin{equation*}
	\F \colon \mathcal{M} \rightarrow \R, 
	\quad 
	(\tilde{\chi}, \tilde{\bm{u}}) \mapsto \int_\Omega \abs{\nabla \tilde{\chi}} 
	+ \int_{\partial \Omega}  (\cos \alpha) \tilde{\chi} \dH
	+ \int_\Omega W(\tilde{\chi}, \E(\tilde{\bm{u}})) \dx 
	+ \int_\Omega \tfrac{1}{2} \abs{\nabla^2 \tilde{\bm{u}}}^2 \dx. 
\end{equation*}
Moreover, we introduce the inner product 
\begin{align*}
	(\chi, \tilde{\chi})_{H^{-1}_{(0)}} \coloneqq \int_\Omega \nabla v \cdot \nabla \tilde{v} \dx, \quad \chi, \tilde{\chi} \in H^{-1}_{(0)}(\Omega), 
\end{align*}
where $v$ (respectively $\tilde{v}$) is the unique solution to the elliptic boundary value problem
\begin{equation}\label{eq:distance_elliptic}
	\left\{ 
	\begin{aligned}
		\Delta v & = \chi && \textnormal{in } \Omega ,\\ 
		\nabla v \cdot \bm{n}_{\partial \Omega} &= 0 && \textnormal{on } \partial \Omega,\\ 
		\int_\Omega v &= 0. 
	\end{aligned}
	\right. 
\end{equation}
Using the Lax-Milgram theorem, it is easy to see that this is well-defined and therefore gives rise to a norm on the space $H^{-1}_{(0)}(\Omega)$. Since the solution operator for the elliptic problem above is nothing but the Riesz isomorphism to the associated Hilbert space $H^1_{(0)} (\Omega)$, this inner product actually induces the natural Hilbert space structure on $H^{-1}_{(0)}(\Omega)$. \\ 
Given any $\chi \in \indic$, we can further define an inner product on the space $\bm{H}^1_{\perp}(\Omega)$ by 
\begin{equation}\label{eq:H^1_norm_chi}
	(\bm{u}, \tilde{\bm{u}})_{\bm{H}^1_{\chi}} \coloneqq \int_\Omega \C_{\nu}(\chi) \E(\bm{u}) \colon \E(\tilde{\bm{u}}) \dx.  
\end{equation}
Recalling that $\C_{\nu}$ is uniformly positive definite and applying Korn's inequality, we find that the norm induced by this inner product is equivalent to the standard norm on the Hilbert space $\bm{H}^1_{\perp}(\Omega)$, see \cite[Sec.\ 6.15]{ciarlet2013linear}, \cite[Sec.\ 2.1]{kwak}. 
\par 
\medskip 
Given a fixed time interval $[0, T_*]$ and initial conditions $(\chi_0, \bm{u}_0) \in \states$, along with a fixed time step size $h = \tfrac{T_*}{n} > 0$, $n \in \N$, we want to inductively select approximate solutions $(\chi_n^h, \bm{u}_n^h)$ at times $nh$ for $k \in \N$ where $k \leq n$. To this end, set $(\chi_0^h, \bm{u}_0^h) = (\chi_0, \bm{u}_0)$ and choose 
\begin{equation}\label{eq:mm_problem}
	(\chi_k^h, \bm{u}_k^h) \in \underset{ (\tilde{\chi}, \tilde{\bm{u}}) \in \states}{\textnormal{argmin}} 
	\Big(
	\F(\tilde{\chi}, \tilde{\bm{u}}) 
	+ \tfrac{1}{2h}\norm{ \chi_{k-1}^h - \tilde{\chi} }^2_{H^{-1}_{(0)}}  
	+\tfrac{1}{2h} \norm{\bm{u}_{k-1}^h - \tilde{\bm{u}} }^2_{\bm{H}^1_{\chi^h_{k-1}}}
	\Big).
\end{equation}

\begin{proposition}\label{prop:minimizer}
	For any admissible $h > 0$ and all admissible $k \in \N$, there exists a minimizer of 
	\begin{equation*}
		\F^h_k \colon \mathcal{M} \rightarrow \R, 
		\quad 
		(\tilde{\chi}, \tilde{\bm{u}}) \mapsto \F(\tilde{\chi}, \tilde{\bm{u}}) 
		+ \tfrac{1}{2h}\norm{ \chi_{k-1}^h - \tilde{\chi} }^2_{H^{-1}_{(0)}}  
		+\tfrac{1}{2h} \norm{\bm{u}_{k-1}^h - \tilde{\bm{u}} }^2_{\bm{H}^1_{\chi^h_{k-1}}}. 
	\end{equation*}
\end{proposition}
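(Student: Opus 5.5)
We apply the direct method of the calculus of variations to $\F^h_k$ on $\states = \indic \times \disp$. First we note that $\F^h_k$ is bounded from below. Since $\alpha \in (0,\tfrac{\pi}{2}]$, we have $\cos\alpha \ge 0$, so the capillary term is nonnegative; alternatively, the trace bound $\int_{\po}\chi \dH \le C(\abs{\nabla\chi}(\Omega)+\norm{\chi}_{L^1})$ would absorb it for any $\abs{\cos\alpha}<1$. The terms $W \ge 0$, $\tfrac12\abs{\nabla^2\bm{u}}^2$ and both squared norms are nonnegative as well. Moreover, $\F^h_k(\chi_{k-1}^h,\bm{u}_{k-1}^h) = \F(\chi_{k-1}^h,\bm{u}_{k-1}^h) < \infty$, so the infimum is finite. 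We then take a minimizing sequence $(\chi_m,\bm{u}_m)\subset\states$.

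Next we derive uniform bounds. The energy bound gives $\sup_m \abs{\nabla\chi_m}(\Omega) < \infty$, and $0 \le \chi_m \le 1$ gives an $L^1$ bound, so $(\chi_m)$ is bounded in $BV(\Omega)$. For the displacements, the term $\tfrac1{2h}\norm{\bm{u}_{k-1}^h-\bm{u}_m}^2_{\bm{H}^1_{\chi^h_{k-1}}}$, which is an equivalent norm on $\bm{H}^1_\perp$ by Korn's inequality \eqref{iq:Korn_perp} and the uniform positivity of $\C_\nu$, bounds $\bm{u}_m$ in $\bm{H}^1$. Together with $\int_\Omega \abs{\nabla^2\bm{u}_m}^2 \dx \le C$, this bounds $\bm{u}_m$ in $\bm{H}^2(\Omega)$. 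By compactness of $BV\hookrightarrow L^1$ and Rellich, after passing to a subsequence we get $\chi_m\to\chi$ in $L^1$ and a.e., with $\chi\in\{0,1\}$ a.e., and $\nabla\chi_m \overset{*}{\rightharpoonup}\nabla\chi$. We also get $\bm{u}_m\rightharpoonup\bm{u}$ in $\bm{H}^2$ and $\bm{u}_m\to\bm{u}$ in $\bm{H}^1$. The limit is admissible: mass $\int_\Omega\chi \dx = m_0$ is preserved by $L^1$ convergence, and $\bm{H}^1_\perp$ is closed, so it is weakly closed.

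The main step is lower semicontinuity, term by term. The elastic term converges, since $\E(\bm{u}_m)\to\E(\bm{u})$ strongly in $L^2$ and $\C(\chi_m),\Tau(\chi_m)\to\C(\chi),\Tau(\chi)$ a.e. and are bounded (they only take the two values $\C^\pm, \Tau^\pm$). Expanding the quadratic form $W$ and using dominated convergence on the coefficients together with strong $L^2$ convergence of $\E(\bm{u}_m)$ gives continuity. The hyperstress term is weakly lower semicontinuous in $L^2$. The $H^{-1}_{(0)}$ term converges because $\chi_m\to\chi$ in $L^2$, and $L^2_{(0)}$ embeds continuously into $H^{-1}_{(0)}$ (mean zero differences). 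The $\bm{H}^1_{\chi^h_{k-1}}$ term converges by strong $\bm{H}^1$ convergence, since the weight $\chi_{k-1}^h$ is fixed.

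The delicate point, which I expect to be the main obstacle, is the lower semicontinuity of $\abs{\nabla\chi}(\Omega)+\cos\alpha\int_{\po}\chi \dH$. Traces are not continuous under weak-$*$ $BV$ convergence, since mass can concentrate at the boundary. Here I would invoke the classical result (e.g.\ Modica's / Maggi \cite[Sec.\ 19]{maggi2012sets}; cf.\ the capillarity functional in \cite{hensel_stinson}) that for $\abs{\cos\alpha}\le 1$ the functional $\int_\Omega\abs{\nabla\chi}+\cos\alpha\int_{\po}\chi\dH$ is $L^1$-lower semicontinuous. The proof idea is to extend $\chi$ by zero to a larger open set $U\supset\overline\Omega$. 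Then $\abs{\nabla\chi}(\Omega)+\int_{\po}\chi\dH = \abs{D(\chi\mathbf 1_\Omega)}(\R^d)$, which is lower semicontinuous. Writing the functional as $(1-\cos\alpha)\abs{\nabla\chi}(\Omega)+\cos\alpha\,\abs{D(\chi\mathbf 1_\Omega)}(\R^d)$ with $\cos\alpha\in[0,1]$ expresses it as a nonnegative combination of lower semicontinuous functionals. Combining all terms, $\F^h_k(\chi,\bm{u}) \le \liminf_m \F^h_k(\chi_m,\bm{u}_m) = \inf \F^h_k$, so $(\chi,\bm{u})$ is a minimizer.
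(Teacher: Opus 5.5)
Your proposal is correct and is exactly the direct-method argument the paper invokes in a single line; you fill in the details properly: uniform $BV\times\bm{H}^2$ bounds, compactness, closedness of $\indic\times\disp$, and term-by-term lower semicontinuity. In particular, your identity $\abs{\nabla\chi}(\Omega)+\cos\alpha\int_{\po}\chi\dH=(1-\cos\alpha)\abs{\nabla\chi}(\Omega)+\cos\alpha\,\abs{D(\chi\mathbf{1}_\Omega)}(\R^d)$ with $\cos\alpha\in[0,1]$ correctly settles the trace term, the only non-obvious point, which the paper leaves implicit. Your unused aside about absorbing that term via a trace inequality is imprecise and unnecessary, since $0\le\chi\le1$ on $\po$ already bounds it.
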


This assertion is an immediate consequence of the direct method of the calculus of variations.

\subsection{Interpolants}

We proceed by constructing approximate solutions that are defined on the whole interval $[0, T_*]$ and not limited to discrete points.\\ 
\textit{Piecewise constant} interpolants will be denoted by $f^h$ (where $f$ is to be substituted by either $\chi$ or $\bm{u}$), which are defined as 
\begin{equation*}
	f^h(t) \coloneqq f ^h_{\lfloor \tfrac{t}{h} \rfloor} \quad \textnormal{for all } t \in [0, T]. 
\end{equation*} 
Moreover, we write $\hat{f}^h$ for \textit{piecewise linear} interpolants 
\begin{equation}\label{eq:linear_interpolants}
	\hat{f}^h (t) \coloneqq \tfrac{kh -t }{h} f^h_{k -1} + \tfrac{t- (k-1)h}{h} f^h_k \quad \textnormal{for all } t \in [(k-1)h, kh]. 
\end{equation}
Lastly, we use the notation $(\bar{\chi}^h, \bar{\bm{u}}^h)$ for the (variational) De Giorgi interpolants, which are defined by 
\begin{align*}
	(\bar{\chi}^h, \bar{\bm{u}}^h)( (k-1) h) &\coloneqq (\bar{\chi}^h((k-1)h), \bar{\bm{u}}^h((k-1)h))  =  ({\chi}^h_{k-1}, {\bm{u}}^h_{k-1}), \quad k \in \N; \\
	(\bar{\chi}_k^h, \bar{\bm{u}}_k^h)(t) &\in \underset{ (\tilde{\chi}, \tilde{\bm{u}}) \in \states}{\textnormal{argmin}} 
	\Big(
	\F(\tilde{\chi}, \tilde{\bm{u}}) 
	+ \tfrac{1}{2 (t - (k-1)h )}\norm{\tilde{\chi} - \chi_{k-1}^h   }^2_{H^{-1}_{(0)}}  
	+\tfrac{1}{2 (t - (k-1)h )} \norm{\tilde{\bm{u}}  - \bm{u}_{k-1}^h  }^2_{\bm{H}^1_{\chi^h_{k-1}}}
	\Big), \\
	& \quad \textnormal{for all } t \in ((k-1) h, kh). \numberthis \label{eq:degiorgi}
\end{align*}
The existence of such an interpolant follows immediately from Proposition \ref{prop:minimizer}. This variational approach will allow us to derive an optimal energy dissipation inequality.  \\ 
Concerning measureability of the De Giorgi interpolants, we refer to \cite[App.\ B.1]{abels2025weaksolutionssharpinterface}, where a similar assertion is proven based on the Kuratowski--Ryll--Nardzewski Measurable Selection Theorem.
\par 
\medskip 
For rigorous limit passage, especially in the energy, the $BV$-formulation will not suffice to capture the evolution in sufficient detail. This problem can be remedied by replacing the current perimeter energy with a more robust formulation. Specifically, we define the oriented varifolds 
\begin{equation}\label{eq:def_mu_Omega}
	\mu^{h, \Omega}_t \coloneqq \abs{\nabla \bar{\chi}^h(t, \cdot)} \llcorner \Omega  \otimes \big( \delta_{\tfrac{\nabla \bar{\chi}^h(t, \bm{x})}{\abs{\nabla \bar{\chi}^h(t, \bm{x})}}} \big)_{\bm{x} \in \Omega}, 
\end{equation}
where $\delta_{\bm{s}}$ denotes the Dirac measure at a point $\bm{s}\in \mathbb{S}^{d-1}$, and 
\begin{equation}\label{eq:def_mu_boundary}
	\mu^{h, \partial \Omega}_t \coloneqq \bar{\chi}^h (t, \cdot) \llcorner \partial \Omega  \otimes \big( \delta_{\bm{n}_{\pO}} \big)_{\bm{x} \in \pO}, 
\end{equation}
with $\bm{n}_{\pO}$ being the inner normal on $\partial \Omega$ with respect to $\Omega$. Finally, we set 
\begin{equation}\label{eq:def_mu_new}
	\mu^{h}_t   \coloneqq   \mu^{h, \Omega}_t  + (\cos \alpha) \mu^{h, \partial \Omega}_t  \in M(\overline{\Omega} \times \mathbb{S}^{d-1}), 
\end{equation}
obtaining an oriented varifold on $\overline{\Omega}$ whose total mass is naturally associated with $\bar{\chi}^h(t, \cdot)$. In particular, it holds that 
\begin{equation}\label{eq:perimeter_varifold}
	\F_{per}(\bar{\chi}) +  \F_{cap}(\bar{\chi}) = \int_\Omega \abs{\nabla \bar{\chi}} + \int_{\pO} (\cos \alpha) \bar{\chi} \dH = \int_{\overline{\Omega} \times \mathbb{S}^{d-1}} d\mu_{\bar{\chi}} (\bm{x}, \bm{s}). 
\end{equation}

\subsection{Approximate energy dissipation inequality}

For any admissible $k \in \N$, we can use the selected minimizer of $\F^h_{k-1}$ as a competitor in the optimization problem associated to $\F_k^h$, and obtain as an immediate consequence of a simple telescoping argument that 
\begin{align}\label{eq:a_piori_trivial}
	\F(\chi^h, \bm{u}^h) (T) 
	+ \tfrac{1}{2h^2}\int_0^{T_*}  \norm{\chi^h(t+h) - \chi(t)}_{H^{-1}_{(0)}}^2 
	+ \norm{\bm{u}^h(t+h) - \bm{u}^h(t)}_{\bm{H}^1_{\chi^h}}^2 \dt 
	\leq 
	\F(\chi^h, \bm{u}^h)(0) 
\end{align}
for all $T \in \N h$. However, this energy dissipation inequality is not optimal and the goal of this section is to show the following, improved, assertion.
\par 
\medskip 
\noindent 
\textit{Claim:} For $\tau, \kappa, s, T$ such that $0 < s < \kappa < \tau < T < T_*$, and any admissible $0 < h < \min\{T- \tau, \kappa - s \}$ it holds that 
\begin{align*}
	&\F(\bar{\chi}^h, \bar{\bm{u}}^h) (T)\\
	&\quad + \frac{1}{2} \int_\kappa^\tau  \begin{aligned}[t]
		&\norm{\pt \hat{\chi}^h}_{H^{-1}_{(0)}}^2 
		+  \norm{ \pt \hat{\bm{u}}^h}^2_{\bm{H}^1_{\bar{\chi}^h ((\lfloor \sfrac{t}{h} \rfloor)h) }} 
		+ \norm{ \tfrac{ \bar{\chi}^h- \bar{\chi}^h ( \lfloor \sfrac{t}{h} \rfloor h) }{t - \lfloor \sfrac{t}{h} \rfloor h } }^2_{H^{-1}_{(0)}} 
		+ \norm{ \tfrac{\bar{\bm{u}}^h   -  \bar{\bm{u}}^h ( \lfloor \sfrac{t}{h} \rfloor h)}{ {t - \lfloor \sfrac{t}{h} \rfloor h } }}^2_{\bm{H}^1_{\bar{\chi}^h ((\lfloor \sfrac{t}{h} \rfloor)h) }} \dt 
	\end{aligned}\\ 
	&\ \  \leq \F(\bar{\chi}^h, \bar{\bm{u}}^h) (s) \leq \F(\bar{\chi}^h, \bar{\bm{u}}^h) (0)
	\numberthis \label{eq:dissipation_approximate}
\end{align*}

While this assertion can be deduced from general results on minimizing movements, cf.\ \cite[Thm.\ 3.1.4]{ambrosio}, we also include a proof here that follows the approach in \cite{hensel_stinson, laux_chambolle}. We begin with the observation that 
\begin{equation}\label{eq:energy_decreasing}
	\F(\bar{\chi}^h, \bar{\bm{u}}^h) (t) \leq \F(\bar{\chi}^h, \bar{\bm{u}}^h) ((k-1) h) 
	\quad \textnormal{for all } t \in ((k-1)h, kh), 
\end{equation}
which is an immediate consequence of the minimality of the interpolant \eqref{eq:degiorgi} at time $t$. Before turning to the continuous inequality \eqref{eq:dissipation_approximate}, we will establish the discrete version  
\begin{align*}
	&\F(\bar{\chi}^h, \bar{\bm{u}}^h) (kh)
	\begin{aligned}[t]
		&+ \tfrac{1}{2h}  \norm{ \chi^h_k - \chi^h_{k-1} }_{H^{-1}_{(0)}}^2 
		+  \tfrac{1}{2h} \norm{ \bm{u}^h_k - \bm{u}^h_{k-1}}^2_{\bm{H}^1_{\bar{\chi}^h ((k-1)h)}} 	\\ 
		& + \tfrac{1}{2} \int_{(k-1)h}^{kh}
		\norm{ \tfrac{ \bar{\chi}^h(t) - \bar{\chi}^h ((k-1)h }{t - (k-1)h } }^2_{H^{-1}_{(0)}} 
		+ \norm{ \tfrac{\bar{\bm{u}}^h (t)  -  \bar{\bm{u}}^h ((k-1)h)  }{ {t -(k-1)h } }}^2_{\bm{H}^1_{\bar{\chi}^h ((k-1)h)}}  \dt
	\end{aligned}
	\\ 
	&\leq \F(\bar{\chi}^h, \bar{\bm{u}}^h) ((k-1)h) \leq \F(\bar{\chi}^h, \bar{\bm{u}}^h) (0)
	\numberthis \label{eq:dissipation_approximate_disc}
\end{align*}
for all admissible $k \in \N$. Without loss of generality, it suffices to derive this relation for the case $k = 1$; in subsequent arguments we will also neglect the superscript $h$ as no confusion seems likely. \\ 
Consider the mapping 
\begin{equation*}
	f(t) \coloneqq \F(\bar{\chi}, \bar{\bm{u}}) (t) 
	+ \tfrac{1}{2t} \norm{\bar{\chi} (t) - \chi_0}_{H^{-1}_{(0)}}^2
	+ \tfrac{1}{2t} \norm{\bar{\bm{u}} (t) - \bm{u}_0}^2_{\bm{H}^1_{\chi_0}}, 
	\quad t \in (0, h); 
\end{equation*}
We will establish that $f$ is locally Lipschitz in $(0, h)$ with 
\begin{equation}\label{eq:f_derivative}
	\tfrac{d}{dt}  f(t) = - \tfrac{1}{2t^2} \norm{\bar{\chi} (t) - \chi_0}_{H^{-1}_{(0)}}^2
	- \tfrac{1}{2t^2} \norm{\bar{\bm{u}} (t) - \bm{u}_0}^2_{\bm{H}^1_{\chi_0}}
	\quad \textnormal{for a.e. } t \in (0, h). 
\end{equation}
To this end, we prove the following properties 
\begin{enumerate}[noitemsep, label=(f.\arabic*)]
	\item $(0, h] \ni t \mapsto \norm{\bar{\chi} (t) - \chi_0}_{H^{-1}_{(0)}}^2
	+  \norm{\bar{\bm{u}} (t) - \bm{u}_0}^2_{\bm{H}^1_{\chi_0}}$ is non-decreasing; \label{eq:f_nd} 
	\item  $(0, h] \ni t \mapsto f(t)$ is non-increasing. \label{eq:f_ni}
\end{enumerate}
As a first step towards this goal, we deduce for all $0 < s < t \leq h$ by using minimality \eqref{eq:degiorgi} in $s$, adding zero, and finally exploiting minimality in $t$,  that  
\begin{align*}
	f(s) &=  \F(\bar{\chi}, \bar{\bm{u}}) (s) 
	+ \tfrac{1}{2s} \norm{\bar{\chi} (s) - \chi_0}_{H^{-1}_{(0)}}^2
	+ \tfrac{1}{2s} \norm{\bar{\bm{u}} (s) - \bm{u}_0}^2_{\bm{H}^1_{\chi_0}}\\ 
	& \leq \F(\bar{\chi}, \bar{\bm{u}}) (t) 
	+ \tfrac{1}{2s} \norm{\bar{\chi} (t) - \chi_0}_{H^{-1}_{(0)}}^2
	+ \tfrac{1}{2s} \norm{\bar{\bm{u}} (t) - \bm{u}_0}^2_{\bm{H}^1_{\chi_0}}\\ 
	& \leq \begin{aligned}[t]
		&\F(\bar{\chi}, \bar{\bm{u}}) (t) 
		+ \tfrac{1}{2t} \norm{\bar{\chi} (t) - \chi_0}_{H^{-1}_{(0)}}^2
		+ \tfrac{1}{2t} \norm{\bar{\bm{u}} (t) - \bm{u}_0}^2_{\bm{H}^1_{\chi_0}}\\ 
		&+\big( \tfrac{1}{2s} - \tfrac{1}{2t} \big) \Big[ \norm{\bar{\chi} (t) - \chi_0}_{H^{-1}_{(0)}}^2
		+ \norm{\bar{\bm{u}} (t) - \bm{u}_0}^2_{\bm{H}^1_{\chi_0}} \Big]
	\end{aligned}\\ 
	& \leq \begin{aligned}[t]
		&\F(\bar{\chi}, \bar{\bm{u}}) (s) 
		+ \tfrac{1}{2t} \norm{\bar{\chi} (s) - \chi_0}_{H^{-1}_{(0)}}^2
		+ \tfrac{1}{2t} \norm{\bar{\bm{u}} (s) - \bm{u}_0}^2_{\bm{H}^1_{\chi_0}}\\ 
		&+\big( \tfrac{1}{2s} - \tfrac{1}{2t} \big)
		\Big[ \norm{\bar{\chi} (t) - \chi_0}_{H^{-1}_{(0)}}^2
		+ \norm{\bar{\bm{u}} (t) - \bm{u}_0}^2_{\bm{H}^1_{\chi_0}} \Big]. 
	\end{aligned}
\end{align*}
Rewriting the inequality above yields
\begin{align*}
	\norm{\bar{\chi} (s) - \chi_0}_{H^{-1}_{(0)}}^2
	+ \norm{\bar{\bm{u}} (s) - \bm{u}_0}^2_{\bm{H}^1_{\chi_0}}   
	\leq \norm{\bar{\chi} (t) - \chi_0}_{H^{-1}_{(0)}}^2
	+ \norm{\bar{\bm{u}} (t) - \bm{u}_0}^2_{\bm{H}^1_{\chi_0}}, 
\end{align*}
which already implies \ref{eq:f_nd}.\par  
Moreover, by applying the minimality \eqref{eq:degiorgi} in $s$, it follows that 
\begin{equation*}
	f(t) - f(s) \geq
	\tfrac{s-t}{2ts} \Big[  \norm{\bar{\chi} (t) - \chi_0}_{H^{-1}_{(0)}}^2
	+ \norm{\bar{\bm{u}} (t) - \bm{u}_0}^2_{\bm{H}^1_{\chi_0}}  \Big], 
\end{equation*}
which entails 
\begin{equation}\label{eq:f_2}
	\tfrac{f(t) - f(s)}{t-s} \geq 
	- \tfrac{1}{2ts} \Big[  \norm{\bar{\chi} (t) - \chi_0}_{H^{-1}_{(0)}}^2
	+ \norm{\bar{\bm{u}} (t) - \bm{u}_0}^2_{\bm{H}^1_{\chi_0}}  \Big]
\end{equation}
for $0 < s < t \leq h$. 
Similarly, using minimality in $t$ leads to the inverse inequality 
\begin{equation*}
	f(t) - f(s) 
	\leq \tfrac{s-t}{2ts} \Big[  \norm{\bar{\chi} (s) - \chi_0}_{H^{-1}_{(0)}}^2
	+ \norm{\bar{\bm{u}} (s) - \bm{u}_0}^2_{\bm{H}^1_{\chi_0}}  \Big], 
\end{equation*}
and thus for all $0 < s < t \leq h$ 
\begin{equation}\label{eq:f_3}
	\tfrac{f(t) - f(s)}{t-s} 
	\leq - \tfrac{1}{2ts} \Big[  \norm{\bar{\chi} (s) - \chi_0}_{H^{-1}_{(0)}}^2
	+ \norm{\bar{\bm{u}} (s) - \bm{u}_0}^2_{\bm{H}^1_{\chi_0}}  \Big]
	\leq 0. 
\end{equation}
Since $t- s > 0$, this immediately implies \ref{eq:f_ni}. For any $0 < s < t \leq h$, \ref{eq:f_ni} and \eqref{eq:f_3} further imply
\begin{equation}\label{eq:f_4}
	\tfrac{ \abs{f(t) - f(s)} }{\abs{t-s}} = \tfrac{f(s) - f(t)}{t-s}
	\leq \tfrac{1}{2ts} \Big[  \norm{\bar{\chi} (s) - \chi_0}_{H^{-1}_{(0)}}^2
	+ \norm{\bar{\bm{u}} (s) - \bm{u}_0}^2_{\bm{H}^1_{\chi_0}}  \Big] , 
\end{equation}
and along with \ref{eq:f_nd}, we conclude that $f$ is continuous and also locally Lipschitz. Thus $f$ is differentiable almost everywhere and we deduce \eqref{eq:f_derivative} from \eqref{eq:f_4} combined with the lower bound \eqref{eq:f_2} by sending $s \nearrow t$ and $t \searrow s$. Here, we also note that the mapping in \ref{eq:f_nd} is monotone and therefore has at most countably many discontinuities.\\ 
Integrating \eqref{eq:f_derivative} over $(s,t)$ further yields 
\begin{equation*}
	f(t) - f(s) 
	= - \int_s^t \tfrac{1}{2\tau^2} \norm{\bar{\chi} (\tau) - \chi_0}_{H^{-1}_{(0)}}^2
	+ \tfrac{1}{2\tau^2} \norm{\bar{\bm{u}} (\tau) - \bm{u}_0}^2_{\bm{H}^1_{\chi_0}} \, d\tau, 
\end{equation*}
and observing that minimality of \eqref{eq:degiorgi} at $s$ entails $f(s) \leq \F(\chi_0, \bm{u}_0)$, along with \ref{eq:f_ni}, leads to 
\begin{equation*}
	f(h) + \int_s^t \tfrac{1}{2\tau^2} \norm{\bar{\chi} (\tau) - \chi_0}_{H^{-1}_{(0)}}^2
	+ \tfrac{1}{2\tau^2} \norm{\bar{\bm{u}} (\tau) - \bm{u}_0}^2_{\bm{H}^1_{\chi_0}} \, d\tau \leq  \F(\chi_0, \bm{u}_0). 
\end{equation*}
By inserting the definition of $f$ and sending $s \searrow 0$ while $t \nearrow h$, we arrive at 
\begin{align*}
	\F(\bar{\chi}, \bar{\bm{u}}) (h) 
	&+ \tfrac{1}{2h} \norm{\bar{\chi} (h) - \chi_0}_{H^{-1}_{(0)}}^2
	+ \tfrac{1}{2h} \norm{\bar{\bm{u}} (h) - \bm{u}_0}^2_{\bm{H}^1_{\chi_0}} \\ 
	&+ \int_0^h \tfrac{1}{2\tau^2} \norm{\bar{\chi} (\tau) - \chi_0}_{H^{-1}_{(0)}}^2
	+ \tfrac{1}{2\tau^2} \norm{\bar{\bm{u}} (\tau) - \bm{u}_0}^2_{\bm{H}^1_{\chi_0}} \, d\tau \leq  \F(\chi_0, \bm{u}_0). 
\end{align*}
As the same is true for any admissible $k \in \N$ (and not merely $k = 1$), one has
\begin{align*}
	&\F(\bar{\chi}^h, \bar{\bm{u}}^h) (kh) 
	+ \tfrac{h}{2} \norm{\tfrac{\chi^h_k - \chi^h_{k-1}}{h}}_{H^{-1}_{(0)}}^2
	+ \tfrac{h}{2} \norm{\tfrac{\bm{u}^h_k - \bm{u}^h_{k-1}}{h}}^2_{\bm{H}^1_{\chi^h_{k-1}}} \\ 
	&\quad + \tfrac{1}{2} \int_{(k-1)h}^{kh} 
	\norm{\tfrac{\bar{\chi}^h (t) - \bar{\chi}^h ((k-1)h)}{t - (k-1)h }}_{H^{-1}_{(0)}}^2
	+  \norm{\tfrac{\bar{\bm{u}}^h (t) - \bar{\bm{u}}^h((k-1)h) }{t - (k-1)h }}^2_{\bm{H}^1_{\chi^h_{k-1}}}  \dt
	\leq  \F(\bar{\chi}^h, \bar{\bm{u}}^h) ((k-1)h). 
\end{align*}
Telescoping these inequalities and applying the definition of piecewise linear interpolants, we deduce that for all admissible $k,m \in \N$ with $m < k$ 
\begin{align*}
	\F(\bar{\chi}^h, \bar{\bm{u}}^h) (kh)  \numberthis \label{eq:dissipatoin_discrete} 
	+  \frac{1}{2} \int_{mh}^{kh}
	&+  \norm{ \pt \hat{\chi}^h }_{H^{-1}_{(0)}}^2
	+  \norm{ \pt \hat{\bm{u}}^h }^2_{\bm{H}^1_{\chi^h_{k-1}}} \\ 
	& +
	\norm{ \tfrac{ \bar{\chi}^h (t) - \bar{\chi}^h ( \lfloor\sfrac{t}{h}\rfloor h )}{t - \lfloor\sfrac{t}{h}\rfloor h}}_{H^{-1}_{(0)}}^2 
	+\norm{ \tfrac{\bar{\bm{u}}^h (t) - \bar{\bm{u}}^h(\lfloor\sfrac{t}{h}\rfloor h ) }{t - \lfloor\sfrac{t}{h}\rfloor h}}^2_{\bm{H}^1_{\chi^h_{ \lfloor\sfrac{t}{h}\rfloor} }} \dt
	\leq  \F(\bar{\chi}^h, \bar{\bm{u}}^h) (mh).
\end{align*}
Finally, it remains to generalize the discrete dissipation inequality to the continuous case, as asserted in \eqref{eq:dissipation_approximate}. Once again employing minimality of \eqref{eq:degiorgi} at $t$, it follows for all $(k-1)h < s < t \leq kh$ that 
\begin{align*}
	\F(\bar{\chi}^h,& \bar{\bm{u}}^h) (t) + \tfrac{1}{2( t- (k-1)h )} 
	\Big[
	\norm{\bar{\chi}^h (t) - \bar{\chi}^h ((k-1)h)}_{H^{-1}_{(0)}}^2
	+  \norm{\bar{\bm{u}}^h (t) - \bar{\bm{u}}^h((k-1)h) }^2_{\bm{H}^1_{\chi^h_{k-1}}}
	\Big]\\ 
	&\leq \F(\bar{\chi}^h, \bar{\bm{u}}^h) (s) + \tfrac{1}{2( t- (k-1)h )} 
	\Big[
	\norm{\bar{\chi}^h (s) - \bar{\chi}^h ((k-1)h)}_{H^{-1}_{(0)}}^2
	+  \norm{\bar{\bm{u}}^h (s) - \bar{\bm{u}}^h((k-1)h) }^2_{\bm{H}^1_{\chi^h_{k-1}}}
	\Big]. 
\end{align*}
Exploiting that the mapping 
\begin{equation}
	((k-1)h, kh] \ni t \mapsto \norm{\bar{\chi}^h (t) - \bar{\chi}^h ((k-1)h)}_{H^{-1}_{(0)}}^2
	+  \norm{\bar{\bm{u}}^h (t) - \bar{\bm{u}}^h((k-1)h) }^2_{\bm{H}^1_{\chi^h_{k-1}}}
\end{equation}
is non-decreasing, cf.\ \ref{eq:f_nd}, along with \eqref{eq:energy_decreasing}, we deduce that 
\begin{equation}\label{eq:F_non-increasing} 
	(0, T_*) \ni t \mapsto \F(\bar{\chi}^h, \bar{\bm{u}}^h) (t) 
	\textnormal{ is non-increasing for all } h \in (0, 1). 
\end{equation}
Moreover, for any admissible $ 0 < h < \min\{T- \tau, \kappa - s \}$, we can find $k_0, m_0$ such that $\tau < k_0 h < T$ and $s < m_0 h < \kappa$. Then, it follows from \eqref{eq:F_non-increasing} and the positivity of the integrand that 
\begin{align*}
	\F(\bar{\chi}^h&, \bar{\bm{u}}^h) (T) +
	\begin{aligned}[t]
		\frac{1}{2} \int_{\kappa}^{\tau}
		&  \norm{ \pt \hat{\chi}^h }_{H^{-1}_{(0)}}^2
		+  \norm{ \pt \hat{\bm{u}}^h }^2_{\bm{H}^1_{\chi^h_{k-1}}} \\ 
		&+
		\norm{\tfrac{\bar{\chi}^h (t) - \bar{\chi}^h ( \lfloor\sfrac{t}{h}\rfloor h )}{t - \lfloor\sfrac{t}{h}\rfloor h}}_{H^{-1}_{(0)}}^2
		+\norm{\tfrac{\bar{\bm{u}}^h (t) - \bar{\bm{u}}^h(\lfloor\sfrac{t}{h}\rfloor h ) }{t - \lfloor\sfrac{t}{h}\rfloor h}}^2_{\bm{H}^1_{\chi^h_{\lfloor\sfrac{t}{h}\rfloor}}} \dt
	\end{aligned} \\ 
	& \leq 
	\F(\bar{\chi}^h, \bar{\bm{u}}^h) (k_0h) +
	\begin{aligned}[t]
		\frac{1}{2} \int_{m_0 h}^{k_0h}
		&  \norm{ \pt \hat{\chi}^h }_{H^{-1}_{(0)}}^2
		+  \norm{ \pt \hat{\bm{u}}^h }^2_{\bm{H}^1_{\chi^h_{k-1}}} \\ 
		&+
		\norm{\tfrac{\bar{\chi}^h (t) - \bar{\chi}^h ( \lfloor\sfrac{t}{h}\rfloor h )}{t - \lfloor\sfrac{t}{h}\rfloor h}}_{H^{-1}_{(0)}}^2
		+\norm{\tfrac{\bar{\bm{u}}^h (t) - \bar{\bm{u}}^h(\lfloor\sfrac{t}{h}\rfloor h ) }{t - \lfloor\sfrac{t}{h}\rfloor h}}^2_{\bm{H}^1_{\chi^h_{ \lfloor\sfrac{t}{h}\rfloor}}}  \dt. 
	\end{aligned} 
\end{align*}
Furthermore, we have $\F(\bar{\chi}^h, \bar{\bm{u}}^h) (s)  \geq \F(\bar{\chi}^h, \bar{\bm{u}}^h) (m_0h) $ since $s < m_0h$. Combining	 these two inequalities with \eqref{eq:dissipatoin_discrete} finally yields \eqref{eq:dissipation_approximate}.  
\hfill $\diamondsuit$

\subsection{The Euler-Lagrange equations}\label{sec:euler_lagrange}

In analogy with Proposition \ref{prop:minimizer}, we define for all $t \in (0, T_*) \setminus \N h$ the functional 
\begin{equation*}
	\F^h_t \colon \mathcal{M} \rightarrow \R, 
	\quad 
	(\tilde{\chi}, \tilde{\bm{u}}) \mapsto \F(\tilde{\chi}, \tilde{\bm{u}}) 
	+ \tfrac{1}{2(t - \lfloor \sfrac{t}{h} \rfloor h )}\norm{\tilde{\chi} - \chi_{\lfloor \sfrac{t}{h} \rfloor}^h}^2_{H^{-1}_{(0)}}  
	+\tfrac{1}{2(t - \lfloor \sfrac{t}{h} \rfloor h)} \norm{ \tilde{\bm{u}} - \bm{u}_{\lfloor \sfrac{t}{h} \rfloor }^h}^2_{\bm{H}^1_{\chi^h_{\lfloor \sfrac{t}{h} \rfloor }}} 
	\numberthis \label{discrete_functional_c}
\end{equation*}
and note that for all admissible $h > 0$, and all $t \in (0, T_*) \setminus \N h$ a minimizer exists. The aim of this section is to derive the corresponding Euler-Lagrange equations and therefore the equations which are satisfied by $(\bar{\chi}^h, \bar{\bm{u}}^h) (t)$. \\ 
We recall the definition of the last term 
\begin{equation*}
	(2(t - \lfloor \sfrac{t}{h} \rfloor h)) \mathcal{D}_{\bm{u}}^{h, t} (\tilde{\bm{u}}) \coloneqq   \norm{ \tilde{\bm{u}} - \bm{u}_{\lfloor \sfrac{t}{h} \rfloor }^h}^2_{\bm{H}^1_{\chi^h_{\lfloor \sfrac{t}{h} \rfloor }}}  
	= \int_\Omega \C_{\nu}(\chi^h_{\lfloor \sfrac{t}{h} \rfloor }) \E(\tilde{\bm{u}} - \bm{u}_{\lfloor \sfrac{t}{h} \rfloor }^h) \colon \E(\tilde{\bm{u}} - \bm{u}_{\lfloor \sfrac{t}{h} \rfloor }^h) \dx 
\end{equation*}
and compute the first variation in direction of $\bm{v} \in \disp$, cf.\ \cite[Lem.\ 3.2]{garcke_00}, 
\begin{equation}\label{var:D_u_u}
	\delta_{\tilde{\bm{u}}} \ \mathcal{D}_{\bm{u}}^{h, t} (\tilde{\bm{u}})   [\bm{v}]
	=  \int_\Omega \C_{\nu}(\chi^h_{\lfloor \sfrac{t}{h} \rfloor }) \E\big(\tfrac{\tilde{\bm{u}} - \bm{u}_{\lfloor \sfrac{t}{h} \rfloor }^h}{t - \lfloor \sfrac{t}{h} \rfloor h }\big) \colon \E(\bm{v}) \dx . 
\end{equation}
The (inner) variation with respect to $\tilde{\chi}$ is more involved, and a tedious computation based on the approach in \cite{garcke_00}, see also Section \ref{sec:energetics}, shows that
\begin{equation}\label{var_D_u_chi}
	\delta_{\tilde{\chi}} \ \mathcal{D}_{\bm{u}}^{h, t} (\tilde{\bm{u}})   [\bm{B}]
	= - \int_\Omega \C_{\nu}(\chi^h_{\lfloor \sfrac{t}{h} \rfloor })  \E\big(\tfrac{\tilde{\bm{u}} - \bm{u}_{\lfloor \sfrac{t}{h} \rfloor }^h}{t - \lfloor \sfrac{t}{h} \rfloor h}\big)
	\colon \nabla^2 \tilde{\bm{u}} \bm{B}  
	+ (\nabla \tilde{\bm{u}})^T  \C_{\nu}(\chi^h_{\lfloor \sfrac{t}{h} \rfloor })  \E\big(\tfrac{\tilde{\bm{u}} - \bm{u}_{\lfloor \sfrac{t}{h} \rfloor }^h}{t - \lfloor \sfrac{t}{h} \rfloor h} \big)  \colon \nabla \bm{B} \dx. 
\end{equation}
Proceeding to the second dissipative term
\begin{equation*}
	\mathcal{D}^{h, t}_{\chi} (\tilde{\chi}) 
	= \tfrac{1}{2(t - \lfloor \sfrac{t}{h} \rfloor h )}\norm{\tilde{\chi} - \chi_{\lfloor \sfrac{t}{h} \rfloor}^h}^2_{H^{-1}_{(0)}}  
	=  \tfrac{t - \lfloor \sfrac{t}{h} \rfloor h }{2}  \norm{\tfrac{\tilde{\chi} - \chi_{\lfloor \sfrac{t}{h} \rfloor}^h}{t - \lfloor \sfrac{t}{h} \rfloor h }}^2_{H^{-1}_{(0)}}   
	= \tfrac{t - \lfloor \sfrac{t}{h} \rfloor h }{2} \int_\Omega \abs{\nabla {w}}^2 \dx, 
\end{equation*}
where $w \in H^1_{(0)} (\Omega)$ solves the corresponding problem of the form \eqref{eq:distance_elliptic} with $\chi$ replaced by $\tfrac{\tilde{\chi} - \chi_{\lfloor \sfrac{t}{h} \rfloor}^h}{t - \lfloor \sfrac{t}{h} \rfloor h }$. More precisely, for all $t \in (0, T_*)$ we define the potential $w^h(t, \cdot)$ associated with the variational interpolant $\bar{\chi}^h$ as the unique solution of 
\begin{equation}\label{eq:w_approximate}
	\left\{ 
	\begin{aligned}
		\Delta w^h(t, \cdot ) & =  \tfrac{\bar{\chi}^h(t, \cdot) - \chi_{\lfloor \sfrac{t}{h} \rfloor}^h}{t - \lfloor \sfrac{t}{h} \rfloor h }&& \textnormal{in } \Omega ,\\ 
		\nabla w^h(t, \cdot ) \cdot \bm{n}_{\partial \Omega} &= 0 && \textnormal{on } \partial \Omega,\\ 
		\int_\Omega w^h(t, \cdot) &= 0. 
	\end{aligned}
	\right. 
\end{equation}
It is well-known, cf.\ \cite{roeger, sturzenhecker}, \cite[Lem.\ 10]{hensel_stinson}, that 
\begin{equation}\label{var:D_chi_chi}
	\delta_{\tilde{\chi}} \mathcal{D}^{h, t}_{\chi} (\tilde{\chi})  [\bm{B}]
	= - \int_\Omega \tilde{\chi} \  \div \big( w \bm{B} \big) \dx . 
\end{equation}
Recall that the energy functional consists of four contributions $\F = \F_{per} + \F_{cap}+ \F_{el} + \F_{\mathfrak{h}}$. 
For the variation of $\F_{el} + \F_{\mathfrak{h}}$ we have already seen in Section~\ref{sec:gradient_flow} that 
\begin{align*}
	\delta_{(\tilde{\chi}, \tilde{\bm{u}})} \big( \F_{el}&((\tilde{\chi}, \tilde{\bm{u}})) + \F_{\mathfrak{h}} (\tilde{\bm{u}}) \big)
	[\bm{B}, \bm{v}] \\
	= &\int_\Omega \big[ W(\tilde{\chi}, \E(\tilde{\bm{u}})) \bm{I} -  (\nabla {\tilde{\bm{u}}} )^T W_{, \E} (\tilde{\chi}, \E(\tilde{\bm{u}})  \big)\big] \colon \nabla \bm{B} \dx 
	+ \int_\Omega \tfrac{1}{2} \abs{\nabla^2 \tilde{\bm{u}}}^2\  \div \bm{B} \dx \\ 
	&-\int_\Omega (\nabla^2\tilde{ \bm{u}})_{ijk}   (\nabla\tilde{ \bm{u}})_{ip}  (\nabla^2 \bm{B})_{pjk}  \dx 
	-\int_\Omega (\nabla^2 \tilde{\bm{u}})_{ijk} \Big( (\nabla^2 \tilde{\bm{u}})_{ipk}  (\nabla\bm{B})_{jp} 
	+ (\nabla^2 \tilde{\bm{u}})_{ijp}  (\nabla{\bm{B}})_{kp}  \Big) \dx\\ 
	&   + \int_\Omega \WE (\tilde{\chi}, \E(\tilde{\bm{u}})) \colon \nabla \bm{v} \dx 
	+\int_\Omega \nabla^2 \tilde{\bm{u}}\colon   \nabla^2 \bm{v} \dx	. 
	\numberthis  \label{var:F_el_h_u}
\end{align*}
The variation of the perimeter and capillary energy in the varifold formulation is given by the first tangential variation
\begin{equation}\label{var:per_varifold}
	\delta_{\tilde{\chi}} \big( \F_{per} (\tilde{\chi}) + \F_{cap} (\tilde{\chi}) \big) [\bm{B}]  
	= \int_{\overline{\Omega} \times \mathbb{S}^{d-1}} (\bm{I} - \bm{s} \otimes \bm{s}) \colon \nabla \bm{B} \, d\mu_{\tilde{\chi}} (\bm{x}, \bm{s}). 
\end{equation}
\par 
\smallskip  
We observe that the variational interpolants $(\bar{\chi}, \bar{\bm{u}})$ are minimizers of the functional \eqref{discrete_functional_c}, and therefore satisfy the Euler--Lagrange equations. Hence, the identities \eqref{var_D_u_chi}, \eqref{var:D_chi_chi} and \eqref{var:F_el_h_u} give rise to 
\begin{align*}
	-\delta_{\tilde{\chi}} &(\F_{per} (\bar{\chi}^h)  + \F_{cap} (\bar{\chi}^h) )[\bm{B}]  \\ 
	= &- \int_\Omega \bar{\chi}^h \  \div \big( w^h \bm{B} \big) \dx  
	- \int_\Omega \C_{\nu}(\chi^h_{\lfloor \sfrac{t}{h} \rfloor })  \E\big(\tfrac{\bar{\bm{u}}^h - \bm{u}_{\lfloor \sfrac{t}{h} \rfloor }^h}{t - \lfloor \sfrac{t}{h} \rfloor h}\big)
	\colon \nabla^2 \bar{\bm{u}}^h \bm{B}  \dx 
	+ \int_\Omega \tfrac{1}{2} \abs{\nabla^2\bar{ \bm{u}}^h}^2\  \div \bm{B} \dx \\ 
	&+ \int_\Omega \Big[ W(\bar{\chi}^h, \E(\bar{\bm{u}}^h)) \bm{I} -  ( \nabla \bar{\bm{u} } )^T \big( \C_{\nu}(\chi^h_{\lfloor \sfrac{t}{h} \rfloor })  \E\big(\tfrac{\bar{\bm{u}} - \bm{u}_{\lfloor \sfrac{t}{h} \rfloor }^h}{t - \lfloor \sfrac{t}{h} \rfloor h} \big) +  W_{, \E} (\bar{\chi}^h, \E(\bar{\bm{u}}^h)  \big)\Big] \colon \nabla \bm{B} \dx 
	\\ 
	&-\int_\Omega
	(\nabla^2 \bar{\bm u}^h)_{ijk}\,(\nabla \bar{\bm u}^h)_{ip}\,(\nabla^2 \bm B)_{pjk}
	\, \dx
	-\int_\Omega
	(\nabla^2 \bar{\bm u}^h)_{ijk}
	\Big(
	(\nabla^2 \bar{\bm u}^h)_{ipk}\,(\nabla \bm B)_{jp}
	+
	(\nabla^2 \bar{\bm u}^h)_{ijp}\,(\nabla \bm B)_{kp}
	\Big)
	\, \dx
\end{align*}
for all $\bm{B} \in \mathcal{S}_{\bar{\chi}^h}$ and a.e. $t \in (0, T_*)$. Following the proof of \eqref{eq:langrange_mult_sg}, the same identity holds even for all $\bm{B} \in \bm{C}^2(\overline{\Omega})$ with $\bm{B} \cdot \bm{n}_{\partial \Omega}$ if we replace $w^h$ by $\widetilde{w}^h = w^h + \lambda^h$, where 
\begin{align*}
	\lambda^h &\int_\Omega \bar{\chi}^h \nabla \cdot \bm{\xi} \dx \\
	=& - \int_\Omega  \bar{\chi}^h \nabla \cdot (w^h \bm{\xi}) \dx 
	- \int_\Omega \C_{\nu}(\chi^h_{\lfloor \sfrac{t}{h} \rfloor })  \E\big(\tfrac{\bar{\bm{u}}^h - \bm{u}_{\lfloor \sfrac{t}{h} \rfloor }^h}{t - \lfloor \sfrac{t}{h} \rfloor h}\big)
	\colon \nabla^2 \bar{\bm{u}}^h \bm{\xi}  \dx 
	+ \int_\Omega \tfrac{1}{2} \abs{\nabla^2\bar{ \bm{u}}^h}^2\  \div \bm{\xi} \dx \\ 
	&+ \int_\Omega \Big[ W(\bar{\chi}^h, \E(\bar{\bm{u}}^h)) \bm{I} -  ( \nabla \bar{\bm{u} } )^T \big( \C_{\nu}(\chi^h_{\lfloor \sfrac{t}{h} \rfloor })  \E\big(\tfrac{\bar{\bm{u}} - \bm{u}_{\lfloor \sfrac{t}{h} \rfloor }^h}{t - \lfloor \sfrac{t}{h} \rfloor h} \big) +  W_{, \E} (\bar{\chi}^h, \E(\bar{\bm{u}}^h)  \big)\Big] \colon \nabla \bm{\xi} \dx 
	\\ 
	&-\int_\Omega
	(\nabla^2 \bar{\bm u}^h)_{ijk}\,(\nabla \bar{\bm u}^h)_{ip}\,(\nabla^2 \bm \xi)_{pjk}
	\dx
	-\int_\Omega
	(\nabla^2 \bar{\bm u}^h)_{ijk}
	\Big(
	(\nabla^2 \bar{\bm u}^h)_{ipk}\,(\nabla \bm \xi)_{jp}
	+
	(\nabla^2 \bar{\bm u}^h)_{ijp}\,(\nabla \bm \xi)_{kp}
	\Big)
	\dx \\ 
	&-\delta_{\tilde{\chi}}(\F_{per} (\bar{\chi}^h)  + \F_{cap} (\bar{\chi}^h) ) [\bm{\xi}] ,
	\numberthis \label{eq:lagrange_mult_mm}
\end{align*}
with $\bm{\xi} \in \bm{C}^{\infty}(\overline{\Omega})$ satisfying $\bm{\xi} \cdot \bm{n}_{\partial \Omega}  = 0$ and  
\begin{equation}\label{eq:C^2_estiamte_xi}
	\norm{\bm{\xi}}_{\bm{C}^2} \leq C(\Omega, m_0, \abs{\Gamma}), 
	\quad \textnormal{and} \quad 
	\int_\Omega \chi \nabla \cdot \bm{\xi} \dx  \geq C(m_0, \Omega). 
\end{equation}
In particular, we obtain the estimate 
\begin{equation}\label{estimate:lambda_implicit}
	\abs{\lambda^h} 
	\leq C\big(\Omega, m_0, \abs{\mu^h_t} (\overline{\Omega})\big)
	\Big(\norm{\nabla w^h}_{L^1} 
	+ \norm{\tfrac{\bar{\bm{u}}^h - \bm{u}_{\lfloor \sfrac{t}{h} \rfloor }^h}{t - \lfloor \sfrac{t}{h} \rfloor h}}_{\bm{H}^1}   \norm{\bar{\bm{u}}^h}_{\bm{H}^2}+ 
	\norm{\bar{\bm{u}}^h}_{\bm{H}^2}^2 
	+ \abs{\mu^h_t} (\overline{\Omega})
	\Big),
\end{equation}
and note that, by definition, one has $\abs{\mu^h_t} (\Omega)=  \abs{\nabla \bar{\chi}^h(t, \cdot)} (\Omega)$.
\par 
\medskip 
To conclude this section, let us summarize our results. First of all, it holds for all $\bm{B} \in \bm{C}^2(\overline{\Omega})$ with $\bm{B}_{|\po} \cdot \bm{n}_{\partial \bm{n}} \equiv 0$ that 
\begin{align*}
	&\int_{\Omega \times \mathbb{S}^{d-1}} (\bm{I} - \bm{s} \otimes \bm{s}) \colon \nabla \bm{B} \, d\mu_{\bar{\chi}} (\bm{x}, \bm{s}) \numberthis \label{el_w_appoximate} \\ 
	& \quad = \int_\Omega \bar{\chi}^h \  \div \big( \widetilde{w}^h \bm{B} \big) \dx  
	+\int_\Omega \C_{\nu}(\chi^h)  \E\big(\tfrac{\bar{\bm{u}}^h - \bm{u}_{\lfloor \sfrac{t}{h} \rfloor }^h}{t - \lfloor \sfrac{t}{h} \rfloor h}\big)
	\colon \nabla^2 \bar{\bm{u}}^h \bm{B}  \dx 
	- \int_\Omega \tfrac{1}{2} \abs{\nabla^2\bar{ \bm{u}}^h}^2\  \div \bm{B} \dx \\ 
	&\qquad - \int_\Omega \Big[ W(\bar{\chi}^h, \E(\bar{\bm{u}}^h)) \bm{I} -  ( \nabla \bar{\bm{u}}^h  )^T \big( \C_{\nu}(\chi^h)  \E\big(\tfrac{\bar{\bm{u}} - \bm{u}_{\lfloor \sfrac{t}{h} \rfloor }^h}{t - \lfloor \sfrac{t}{h} \rfloor h} \big) +  W_{, \E} (\bar{\chi}^h, \E(\bar{\bm{u}}^h)  \big)\Big] \colon \nabla \bm{B} \dx 
	\\ 
	&\qquad +\int_\Omega
	(\nabla^2 \bar{\bm u}^h)_{ijk}\,(\nabla \bar{\bm u}^h)_{ip}\,(\nabla^2 \bm B)_{pjk}
	\dx
	+\int_\Omega
	(\nabla^2 \bar{\bm u}^h)_{ijk}
	\Big(
	(\nabla^2 \bar{\bm u}^h)_{ipk}\,(\nabla \bm B)_{jp}
	+
	(\nabla^2 \bar{\bm u}^h)_{ijp}\,(\nabla \bm B)_{kp}
	\Big)
	\dx, 
\end{align*}
Moreover, \eqref{var:D_u_u} and \eqref{var:F_el_h_u} yield 
\begin{align}\label{el_elastic_approximate_1}
	\int_\Omega 
	\big[ 
	\C_{\nu}(\chi^h_{\lfloor \sfrac{t}{h} \rfloor }) \E\big(\tfrac{\bar{\bm{u}} - \bm{u}_{\lfloor \sfrac{t}{h} \rfloor }^h}{t - \lfloor \sfrac{t}{h} \rfloor h }\big)
	+ \WE ({\bar{\chi}}^h, \E({\bar{\bm{u}}^h}))
	\big] 
	\colon \E(\bm{v}) 
	+ \nabla^2 {\bar{\bm{u}}}^h\colon \nabla^2 \bm{v} \dx = 0. 
\end{align}
for all $\bm{v} \in \disp$ and any $t \in (0, T_*) \setminus \N h$. Moreover, note that $\C_{\nu} (\chi) \E(\pt \bm{u})$ and $ \WE (\chi, \E(\bm{u}))$ are symmetric and that the inner product of a symmetric and a skew symmetric matrix is zero. Combined with the observation that the second gradient in $\nabla^2 \bm{v}$ annihilates any infinitesimally rigid movement (since $\bm{H}^1(\Omega) = \bm{H}^1_{\textnormal{ird}}(\Omega) \oplus \bm{H}^1_\perp(\Omega)$), we can even allow test function $\bm{v} \in \bm{H}^2(\Omega)$, without the restriction to $\bm{H}^1_\perp(\Omega)$. 
It is also easy to see that 
\begin{equation*}
	\delta_{\tilde{u}} \Big( \tfrac{1}{2h} \norm{\bm{u}_{k-1}^h - \tilde{\bm{u}} }^2_{\bm{H}^1_{\chi^h_{k-1}}} \Big) [\bm{v}]
	= \int_\Omega \C_\nu (\chi^h_{k-1}) \E(\tfrac{\tilde{u} - \bm{u}^h_{k-1}}{h }) \colon \E(\bm{v}) \dx 
\end{equation*}
which implies that for all $t \in (0, T_*)$ and all $\bm{v} \in  \bm{H}^2(\Omega)$
\begin{align}\label{el_elastic_approximate}
	\int_\Omega 
	\big[\C_{\nu}(\chi^h) \E ( \pt \hat{\bm{u}}^h) 
	+ \WE ({\chi}^h, \E({\bm{u}^h}))
	\big] 
	\colon \E(\bm{v}) 
	+ \nabla^2 {\bm{u}}^h\colon \nabla^2 \bm{v} \dx = 0. 
\end{align}
Finally, we see that the energy-dissipation inequality \eqref{eq:dissipation_approximate} is equivalent to 
\begin{align*}
	\F(\bar{\chi}^h, \bar{\bm{u}}^h) (T)
	&+ \tfrac{1}{2} \int_\kappa^\tau 
	\norm{\pt \hat{\chi}^h}_{H^{-1}_{(0)}}^2 
	+  \norm{ \pt \hat{\bm{u}}^h}^2_{\bm{H}^1_{{\chi}^h(t) }} 
	+ \norm{ \nabla w^h  }^2_{L^2} 
	+ \norm{ \tfrac{\bar{\bm{u}}^h   -  \bar{\bm{u}}^h ( \lfloor \sfrac{t}{h} \rfloor h)  }{ {t - \lfloor \sfrac{t}{h} \rfloor h } }}^2_{\bm{H}^1_{{\chi}^h(t) }} \dt \\ 
	&\leq \F(\bar{\chi}^h, \bar{\bm{u}}^h) (s) \leq \F(\bar{\chi}^h, \bar{\bm{u}}^h) (0)
	\numberthis \label{eq:dissipation_approximate_w}
\end{align*}
for all $\tau, \kappa, s, T$ such that $0 < s < \kappa < \tau < T < T_*$ and any admissible $0 < h < \min\{T- \tau, \kappa - s \}$.

\subsection{Compactness}

The approximate energy dissipation inequality \eqref{eq:dissipation_approximate_w} immediately the following uniform estimate

\begin{align*}
	\norm{ \abs{\mu^h_t}(\overline{\Omega}) }_{L^\infty(0, T_*)} 
	+ \norm{ \bar{\bm{u}}^h }_{L^\infty(0, T_*; \bm{H}^2)} 
	&+ 	\norm{\pt \hat{\chi}^h}_{L^2(0, T_*;H^{-1}_{(0)})}
	+  \norm{ \pt \hat{\bm{u}}^h}^2_{L^2(0, T_*;\bm{H}^1) } \\
	&+ \norm{ \nabla w^h  }_{L^2(0, T_*;L^2)} 
	+ \norm{ \tfrac{\bar{\bm{u}}^h   -  \bar{\bm{u}}^h ( \lfloor \sfrac{\cdot}{h} \rfloor h)  }{ {(\cdot\,  - \lfloor \sfrac{\cdot}{h} \rfloor) h } }}_{L^2(0, T_*;\bm{H}^1) } 
	\leq C,
	\numberthis
	\label{est:a_priori_mm}
\end{align*}
where $C < \infty$ is a constant that does not depend on $h > 0$. Here, we used that the norms $\norm{\cdot  }_{\bm{H}^1_\chi}$ are equivalent to the standard norm on $\bm{H}^1(\Omega)$, implying the existence of constants $c, C > 0$ that do not depend on $\chi$ such that 
\begin{equation*}
	c \norm{\bm{v}}_{\bm{H}^1_\chi} \leq \norm{\bm{v}}_{\bm{H}^1} \leq C \norm{\bm{v}}_{\bm{H}^1_\chi}
\end{equation*}
for all $\bm{v} \in \bm{H}^1_{\perp}(\Omega)$. 

\subsubsection*{Potential}

Appealing to the estimates \eqref{estimate:lambda_implicit} and \eqref{est:a_priori_mm}, we find that 
\begin{align*}
	\norm{\lambda^h}_{L^2(L^2)}^2
	&\leq \int_0^{T_*}  C\big(\Omega, m_0, \abs{\mu^h_t} (\Omega)\big) \Big( 
	\norm{\nabla w^h}_{L^2}^2 
	+  \norm{\bar{\bm{u}}^h}_{\bm{H}^2}^2 \norm{\tfrac{\bar{\bm{u}}^h - \bm{u}_{\lfloor \sfrac{t}{h} \rfloor }^h}{t - \lfloor \sfrac{t}{h} \rfloor h}}_{\bm{H}^1}^2 + 
	\norm{\bar{\bm{u}}^h}_{\bm{H}^2}^4
	+ \abs{\mu^h_t}^2(\Omega)
	\Big)\\ 
	&\leq C < \infty
\end{align*}
uniformly in $h > 0$. Using the Poincaré-Wirtinger inequality furthermore yields that $\norm{w^h}_{L^2(H^1)} \leq C $ for some $C < \infty$ and all $h > 0$, allowing us to conclude that 
\begin{align*}
	w^h  &\rightharpoonup \tilde{w}  \quad \textnormal{in } L^2(0, T_*; H^1(\Omega)),
	\label{eq:conv_w} \numberthis
	\\ 
	\lambda^h &\rightharpoonup \lambda \quad \textnormal{in } L^2(0, T_*; L^2(\Omega)), 
\end{align*}
along a subsequence of $h \searrow 0$. By linearity of weak limits these convergences entail $w^h + \lambda^h \rightharpoonup \tilde{w} + \lambda \eqqcolon w$ in $L^2(0, T_*; H^1(\Omega))$ along a not relabeled subsequence.

\subsubsection*{Order parameter}

Our next goal is to establish strong convergence of $\chi^h, \bar{\chi}^h$ and $\hat{\chi}^h$ in $L^2(\Omega_{T_*})$ to a common limit: 
\begin{equation}\label{eq:chi_conv_strong}
	\chi^h, \bar{\chi}^h, \hat{\chi}^h \to \chi \quad \textnormal{in } L^2(0, T_{*}; L^2(\Omega))
\end{equation}
In order to apply the Aubin-Lions-Simon compactness result, we first need to verify that the differences of translations in time of $\chi^h$ converge uniformly to zero:\par 
\medskip
\noindent
\textit{Claim:} 
\begin{equation}\label{eq:translations_chi}
	\int_0^{T_*- \delta} \norm{\chi^h(t+ \delta) - \chi^h}^2_{H^{-1}_{(0)}} \dt \to 0 
	\quad 
	\textnormal{uniformly in } h> 0 \textnormal{ as } \delta \to 0. 
\end{equation}
The subsequent computations follow \cite[Lem.\ 4.2.7]{stinson21}. \par 
\medskip 
\noindent
\textit{Case 1: }($\delta < h$) Suppose that $n = \sfrac{T_*}{h}$, then it holds that 
\begin{align*}
	\int_0^{T_*- \delta} \norm{\chi^h(t+ \delta) - \chi^h(t)}^2_{H^{-1}_{(0)}} \dt
	= \sum_{k = 1}^{n-1} \int_{kh - \delta}^{kh} \norm{ \chi_k^h - \chi_{k-1}^h}^2_{H^{-1}_{(0)}} \dt 
	= \sum_{k = 1}^{n-1}  \delta\norm{ \chi_k^h - \chi_{k-1}^h}^2_{H^{-1}_{(0)}}. 
\end{align*}
Recalling that $\chi_k^h - \chi_{k-1}^h = h\, \pt \hat{\chi}^h(t) = \int_{(k-1)h}^{kh} \pt \hat{\chi}^h(t)  \dt$, we proceed by using the properties of Bochner integrals and Hölder's inequality to obtain 
\begin{align*}
	\int_0^{T_*- \delta} \norm{\chi^h(t+ \delta) - \chi^h(t)}^2_{H^{-1}_{(0)}} \dt 
	&= \sum_{k = 1}^{n-1} \delta \norm{ \int_{(k-1)h}^{kh} \pt \hat{\chi}^h(t)  \dt}_{H^{-1}_{(0)}}^2
	\leq \delta \sum_{k = 1}^{n-1}   \Big( \int_{(k-1)h}^{kh} \norm{ \pt \hat{\chi}^h(t) }_{H^{-1}_{(0)}}  \dt\Big)^2 	\\
	&\leq \delta \sum_{k = 1}^{n-1}   h \int_{(k-1)h}^{kh} \norm{ \pt \hat{\chi}^h(t) }_{H^{-1}_{(0)}}^2  \dt  
	\leq \delta h \int_0^{T_*}  \norm{ \pt \hat{\chi}^h(t) }_{H^{-1}_{(0)}}^2  \dt  
	\leq C \delta, 
\end{align*}
where $C > 0$ is independent of $h \in (0, 1)$ due to \eqref{est:a_priori_mm}.\par 
\medskip
\noindent 
\textit{Case 2: }($\delta \geq h$) For this case, let us define $[\delta]_{h} \coloneqq \delta \textnormal{ mod } h$ and note that then there exists some $l \in \N$ such that $lh = \delta - [\delta]_h$. As before we suppose that $n = \sfrac{T_*}{h}$ and split the resulting subinterval of $(0, T_*)$ in the following way
\begin{align*}
	&\int_0^{T_*- \delta} \norm{\chi^h(t+ \delta) - \chi^h(t)}^2_{H^{-1}_{(0)}} \dt \\ 
	& \quad \leq 
	\sum_{k = 1}^{n-l} \int_{(k-1) h}^{kh - [\delta]_h} \norm{\chi^h_{(k-1)+l} - \chi^h_{k-1}}_{H^{-1}_{(0)}}^2  \dt 
	+ \sum_{k = 1}^{n-l- 1}  \int_{kh - [\delta]_h}^{kh } \norm{\chi^h_{k+l} - \chi^h_{k-1}}_{H^{-1}_{(0)}}^2  \dt \\ 
	& \quad \leq \sum_{k = 1}^{n-l} (h - [\delta]_h) \bigg( \sum_{i = k-1}^{k+l-2} \norm{\chi_{i+1}^h - \chi_i}_{H^{-1}_{(0)}}^2 \bigg)
	+  \sum_{k = 1}^{n-l- 1}  [\delta]_h \bigg( \sum_{i = k-1}^{k+l-1}  \norm{\chi_{i+1}^h - \chi_i}_{H^{-1}_{(0)}}^2\bigg)  \\ 
	& \quad = \sum_{k = 1}^{n-l} (h - [\delta]_h)\bigg( \sum_{i = k-1}^{k+l-2} \norm{ \int_{ih}^{(i+1)h} \pt \hat{\chi}^h(t) \dt }_{H^{-1}_{(0)}}^2 \bigg) 
	+ \sum_{k = 1}^{n-l- 1}  [\delta]_h \bigg( \sum_{i = k-1}^{k+l-1}  \norm{\int_{ih}^{(i+1)h} \pt \hat{\chi}^h(t) \dt}_{H^{-1}_{(0)}}^2\bigg)\\ 
	& \quad \leq\begin{aligned}[t]
		&\sum_{k = 1}^{n-l} (h - [\delta]_h)\bigg( \sum_{i = k-1}^{k+l-2} \Big( \int_{ih}^{(i+1)h} \norm{ \pt \hat{\chi}^h(t)  }_{H^{-1}_{(0)}} \dt \Big)^2\bigg)\\ 
		&\quad + \sum_{k = 1}^{n-l- 1}  [\delta]_h \sum_{i = k-1}^{k+l-1} \bigg( \Big( \int_{ih}^{(i+1)h}  \norm{ \pt \hat{\chi}^h(t)}_{H^{-1}_{(0)}} \dt \Big)^2\bigg)
	\end{aligned} \\ 
	& \quad \leq \begin{aligned}[t]
		\sum_{k = 1}^{n-l} (h - [\delta]_h)  l h  \int_{(k-1)h}^{(k+l-2)h} &\norm{\pt \hat{\chi}^h(t)}_{H^{-1}_{(0)}}^2  \dt 
		+ \sum_{k = 1}^{n-l- 1}  [\delta]_h l h  \int_{(k-1h)}^{(k-1+l)h}  \norm{ \pt \hat{\chi}^h(t)}_{H^{-1}_{(0)}}^2 \dt
	\end{aligned} \\ 
	& \quad \leq \big(\sum_{k = 1}^{n-l} (h- [\delta]_h + [\delta]_h\big) l h \Big(\int_{0}^{T_*} \norm{ \pt \hat{\chi}^h(t)}_{H^{-1}_{(0)}}^2 \dt \Big)\\ 
	&\quad  \leq C h (n-l) l h \leq C \delta, 
\end{align*} 
where the last inequality is due to $nh - lh < T_*$ and $l h \leq \delta$. 
\hfill$\diamondsuit$\par 
\medskip
Using the definition of $w^h$ \eqref{eq:w_approximate}, and the \textit{a priori} estimate \eqref{est:a_priori_mm}, we further observe that the difference between $\chi^h$ and $\bar{\chi}^h$ in $L^2(0, T_*; H^{-1}_{(0)})$ vanishes as $h \searrow 0$, i.e., 
\begin{equation}\label{eq:difference_chi}
	\int_{0}^{T^*} \norm{ \bar{\chi}^h - \chi^h}_{ H^{-1}_{(0)}}^2 \dt  
	= \int_{0}^{T^*} \norm{  \Delta w^h (t- \lfloor \sfrac{t}{h} \rfloor h) }_{ H^{-1}_{(0)}}^2 \dt  
	\leq  h^2 \int_{0}^{T^*} \norm{  w^h }_{ H^{1}_{(0)}}^2 \dt  
	\leq C_\chi h^2  
\end{equation}
for some $C_\chi > 0$ independent of $h > 0$. Exploiting this fact, we can deduce that \eqref{eq:translations_chi} also holds for the De Giorgi interpolants $\bar{\chi}^h$ along some subsequence of $h \searrow 0$. To this end, fix some arbitrary $\eps > 0$ and choose $\bar{h} = \bar{h}(\eps) > 0$ such that $C_\chi h^2 < \eps$. Moreover, due to the uniform convergence established in \eqref{eq:translations_chi}, we can find some $\bar{\delta}' = \bar{\delta}'(\eps) > 0$ with 
\begin{equation*}
	\int_0^{T_*- \delta} \norm{\chi^h(t+ \delta) - \chi^h}^2_{H^{-1}_{(0)}} \dt < \eps
\end{equation*} 
for all $0 < \delta < \bar{\delta}'$ and all $h > 0$. Thus, the triangle inequality and Jensen's inequality readily yield
\begin{align*}
	&\int_0^{T_*- \delta} \norm{\bar{\chi}^h(t+ \delta) - \bar{\chi}^h}^2_{H^{-1}_{(0)}} \dt \numberthis \label{eq:3_eps_translation}
	\\
	&\quad \leq  3 \Big( \int_\delta^{T_*} \norm{\bar{\chi}^h - \chi^h}^2_{H^{-1}_{(0)}} \dt 
	+\int_0^{T_*- \delta} \norm{\chi^h(t+ \delta) - \chi^h}^2_{H^{-1}_{(0)}} \dt
	+ \int_0^{T_*- \delta} \norm{\chi^h - \bar{\chi}^h}^2_{H^{-1}_{(0)}} \dt  \Big)
	\leq 9\eps
\end{align*}
for all $0 < \delta < \bar{\delta}'$ and $0 < h < \bar{h}$. Note that this estimate still depends on $\overline{h}(\eps)$. To obtain the required uniform convergence, we recall that for all fixed $h > 0$ it holds, see e.g.\ \cite[Lem.\ 4.15]{alt2013lineare}, 
\begin{equation*}
	\int_0^{T_*- \delta} \norm{\bar{\chi}^h(t+ \delta) - \bar{\chi}^h(t)}^2_{H^{-1}_{(0)}} \dt \to 0 \quad  \textnormal{ as } \quad  \delta \searrow 0. 
\end{equation*}
By passing to a subsequence $(h_k)_{k \in \N}$ of $h \searrow 0$, we find that there are only finitly many $h_k > \bar{h}$ and we can extract some $\bar{\delta}^*  > 0$ such that \eqref{eq:3_eps_translation} holds for all $0 < \delta < \bar{\delta} \coloneqq \min \{\bar{\delta}', \bar{\delta}^*\}$ and all $(h_k)_{k \in \N}$. We observe that for $d \leq 3$ one has $BV(\Omega) \hookrightarrow\hookrightarrow  L^{\sfrac{6}{5}} (\Omega) \hookrightarrow H^{-1}_{(0)}(\Omega)$, cf.\ \cite[Cor.\ 3.49]{ambrosio2000functions}. Invoking the Aubin-Lions-Simon lemma, see \cite{simon1986compact}, in combination with Lebesgue's generalized dominated convergence result yields the existence of $\chi \in L^2(0, T_*; BV(\Omega, \{0, 1\})) \cap H^1(0, T_*; H^{-1}_{ (0) })$ with
\begin{equation*}
	\chi^h, \bar{\chi}^h \to \chi \quad \textnormal{in } L^2(0, T_{*}; L^2(\Omega)),
\end{equation*}
where the fact that the limits of $\chi^h$ and $\bar{\chi}^h$ coincide follows from \eqref{eq:difference_chi}. It remains establish strong convergence of the piecewise linear interpolant $\hat{\chi}^h$. Recalling the \textit{a priori} estimate \eqref{est:a_priori_mm}, it is obvious that Aubin-Lions-Simon is applicable and we only need to confirm that the limit is indeed $\chi$. Using definition \eqref{eq:linear_interpolants} of $\hat{\chi}^h$, we find for $i = \lfloor \tfrac{t}{h} \rfloor$ that
\begin{align*}
	\norm{\chi^h(t) - \hat{\chi}^h(t)}_{H^{-1}_{(0)}} 
	&=  \norm{\chi^h_i - \hat{\chi}^h(t)}_{H^{-1}_{(0)}} 
	\leq \int_{ih}^t \norm{\pt \hat{\chi}^h(t)}_{H^{-1}_{(0)}} \, ds   \\
	&\leq  h^{\sfrac{1}{2}}\Big( \int_{ih}^{(i+1)h}  \norm{\pt \hat{\chi}^h}_{H^{-1}_{(0)}} ^2 \, ds   \Big)^{\sfrac{1}{2}} 
	\leq h^{\sfrac{1}{2}}    \norm{\pt \hat{\chi}^h}_{L^2(H^{-1}_{(0)})}, 
\end{align*}
such that after integration in time, we arrive at 
\begin{equation*}
	\norm{\chi^h(t) - \hat{\chi}^h(t)}_{L^1(H^{-1}_{(0)})} \leq  h^{\sfrac{1}{2}} \, T_*   \norm{\pt \hat{\chi}^h}_{L^2(H^{-1}_{(0)})}, 
\end{equation*}
which shows that $\chi^h$ and $\hat{\chi}^h$ have the same limit.\par 
Lastly, note that we also claim that $\chi$ admits a time derivative in $H^{-1}_{(0)}(\Omega)$. Indeed, weak compactness implies the existence of $\mathfrak{X} \in L^2(0, T_*; H^{-1}_{(0)})$ such that $\pt \hat{\chi}^h \rightharpoonup \mathfrak{X}$ in $L^2(0, T_*; H^{-1}_{(0)})$. By passing to the limit in the weak formulation, we find 
\begin{equation}\label{eq:conv_dt_chi}
	\int_0^{T_*} \Big( {}_{H^{-1}_{(0)}}\langle \mathfrak{X}, \zeta \rangle_{H^1_{(0)}} + \int_\Omega {\chi}\,  \pt \zeta \dx \Big) \dt = - \int_\Omega \chi_0 \zeta(0, \bm{x}) \dx
\end{equation}
for all test functions $\zeta \in C^1_c([0, T_*) \times \Omega) \cap H^1(0, T_*; H^1_{(0)})$, and conclude that $\chi$ is weakly differentiable in time with $\pt \chi = \mathfrak{X}$. Moreover, since $H^1(0, T_*; H^{-1}_{(0)})$ admits a trace at $t = 0$ in $H^{-1}_{(0)}$, the identity above implies 
\begin{equation*}
	\textnormal{Tr}_{|t = 0} \chi = \chi_0 \quad \textnormal{in } H^{-1}_{(0)}.  
\end{equation*}
\par 
\medskip 
This concludes the arguments concerning convergence in Sobolev spaces. It remains to show that the limit is indeed in $L^\infty_{w^*}(0, T_*; BV(\Omega; \{0, 1\}))$. Due to \eqref{est:a_priori_mm} we already have a uniform estimate for $\nabla \bar{\chi}^h$ in $L^\infty(0, T; \bm{M}(\Omega))$, such that invoking the Banach--Alaoglu theorem yields the existence of some $ \bm{\lambda}  \in L^\infty(0, T_*; \bm{M}(\Omega))$ with
\begin{equation*}
	\nabla \bar{\chi}^h \xrightharpoonup{*} \bm{\lambda} \quad \textnormal{in} \quad  L^\infty_{w^*}(0, T_*; \bm{M}(\Omega))
\end{equation*}
along a not relabeled subsequence.\\ 
Let $\xi \in C^\infty_c(0, T_*)$ and $\bm{\eta} \in C^\infty_c(\Omega)$. Due to the properties of $BV$, it holds that
\begin{equation*}
	\int_0^{T_*}  \xi(t)  \int_\Omega  \bar{\chi}^h(t, \bm{x}) \nabla \cdot \bm{\eta}(\bm{x})\,  \dx \dt  = - \int_0^{T_*} \xi(t) \int_\Omega    \bm{\eta}(\bm{x}) \cdot  d\nabla \chi \dt, 
\end{equation*}
such that passing to the limit using the weak$^*$ and strong convergence results from above implies 
\begin{equation*}
	\int_0^{T_*}  \xi(t)  \int_\Omega  {\chi}(t, \bm{x}) \nabla \cdot \bm{\eta}(\bm{x})\,  \dx \dt  = - \int_0^{T_*} \xi(t) \int_\Omega    \bm{\eta}(\bm{x}) \cdot  d\bm{\lambda}(t) \dt. 
\end{equation*}
By appealing to the fundamental lemma of the calculus of variations, we obtain for almost all $t \in (0, T_*)$ that
\begin{equation*}
	\int_\Omega  {\chi}(t, \bm{x}) \nabla \cdot \bm{\eta}(\bm{x})\,  \dx  = -  \int_\Omega    \bm{\eta}(\bm{x}) \cdot d\bm{\lambda}(t). 
\end{equation*}
Exploiting separability of $\bm{C}^\infty_c(\Omega)$ and invoking a continuity argument, we conclude that $\chi$ is indeed in $L_{w^*}^\infty(0, T_*; BV(\Omega; \{0, 1\}))$ with $\nabla \chi  = \bm{\lambda} $ in $\bm{M}(\Omega)$. In particular, 
\begin{equation}\label{conv:nabla_chi_measure}
	\nabla \bar{\chi}^h \xrightharpoonup{*} \nabla \chi \quad \textnormal{in} \quad L^\infty_{w^*}(0, T_*; \bm{M}(\Omega)). 
\end{equation}

\subsubsection*{Displacement}

Analogously to the order parameter, we claim that the differences between translations in time and the original displacement functions $\bm{u}^h$ vanish uniformly in $h > 0$. \par 
\medskip 
\noindent
\textit{Claim:}
\begin{equation}\label{eq:translations_u}
	\int_0^{T_*- \delta} \norm{\bm{u}^h(t+ \delta) - \bm{u}^h}^2_{\bm{H}^1} \dt \to 0 
	\quad 
	\textnormal{uniformly in } h> 0 \textnormal{ as } \delta \to 0. 
\end{equation}

\noindent
\textit{Proof:} Recalling the \textit{a priori} estimates \eqref{est:a_priori_mm} for $\bm{u}^h$ and $\pt \hat{\bm{u}}^h$, this assertion follows analogously to \eqref{eq:translations_chi}; see also \cite[Lem.\ 4.2.7]{stinson21}. \hfill$\diamondsuit$
\par 
\medskip 
\noindent
Analogously to \eqref{eq:difference_chi}, we observe that the \textit{a priori} estimate further \eqref{est:a_priori_mm} implies 
\begin{equation}\label{eq:u_u_bar_difference}
	\int_0^{T_*} \norm{\bar{\bm{u}}^h - \bm{u}^h}_{\bm{H^1}}^2 \dt 
	= 	\int_0^{T_*}  (t - \lfloor \tfrac{t}{h} \rfloor h)^2  \norm{\tfrac{\bar{\bm{u}}^h - \bar{\bm{u}}^h (\lfloor \tfrac{t}{h} \rfloor h )} { t - \lfloor \tfrac{t}{h} \rfloor h}}_{\bm{H^1}}^2 \dt 
	\leq C_{\bm{u}} h^2. 
\end{equation}
Hence, \eqref{eq:translations_u} and a similar argument as for $\bar{\chi}^h$ implies that along a subsequence of $h \searrow 0$ one has 
\begin{equation*}
	\int_0^{T_*- \delta} \norm{\bar{\bm{u}}^h(t+ \delta) -\bar{ \bm{u}}^h}^2_{\bm{H}^1} \dt \to 0 
	\quad 
	\textnormal{uniformly in } h> 0 \textnormal{ as } \delta \to 0. 
\end{equation*}
Moreover, for the distance between $\bm{u}^h$ and $\hat{\bm{u}}^h$, we obtain 
\begin{align*}
	\norm{\bm{u}^h(t) - \hat{\bm{u}}^h(t)}_{\bm{H}^1} 
	&= \norm{\bm{u}^h_i - \hat{\bm{u}}^h(t)}_{\bm{H}^1}  
	\leq \int_{ih}^t \norm{\pt \hat{\bm{u}}^h}_{\bm{H}^1}  \dt \\
	&\leq h^{\sfrac{1}{2}}\Big( \int_{ih}^{(i+1)h}  \norm{\pt \hat{\bm{u}}^h}_{\bm{H}^1} ^2 \dt  \Big)^{\sfrac{1}{2}} 
	\leq h^{\sfrac{1}{2}}  \norm{\pt \hat{\bm{u}}^h}_{L^2(\bm{H}^1)}, 
	\numberthis \label{eq:u_u_hat_approach}
\end{align*}
which tends to $0$ as $h \searrow 0$ due to the uniform estimate \eqref{est:a_priori_mm} on $\norm{\pt \hat{\bm{u}}^h}_{L^2(\bm{H}^1)}$. The same \textit{a piori} estimate includes a uniform bound for the functions $\bar{\bm{u}}^h$ in $L^2(0, T; \bm{H}^2(\Omega))$.  Likewise bounded are the piecewise constant and piecewise linear interpolants $\bm{u}^h, \hat{\bm{u}}^h$, which is the assertion of the first energy dissipation inequality \eqref{eq:a_piori_trivial}. Thus, Aubin-Lions-Simon yields the existence of a limit $\bm{u} \in L^2(0, T_*; \bm{H}^2(\Omega)) \cap H^1(0, T_*; \bm{H}^1_{\perp} (\Omega))$ such that 
\begin{equation}\label{eq:u_conv_strong}
	\bm{u}^h, \bar{\bm{u}}^h, \hat{\bm{u}}^h \to \bm{u} \quad 
	\textnormal{in } 
	L^2(0, T; \bm{H}^1_{\perp}(\Omega)), 
\end{equation}
where \eqref{eq:u_u_bar_difference} and \eqref{eq:u_u_hat_approach} imply that the three individual limits coincide. Note that, analogously to the argument for the order parameter $\chi$, one can show $\pt \hat{\bm{u}}^h \rightharpoonup \pt \bm{u}$ in $L^2(0, T_*; \bm{H}^1_{\perp}(\Omega))$, and since $H^1(0, T_*; \bm{H}^1_{\perp} (\Omega))$ admits a trace at $t = 0$ in $\bm{H}^1_{\perp} (\Omega)$, we deduce 
\begin{equation*}
	\textnormal{Tr}_{|t = 0} \bm{u} = \bm{u}_0 
	\quad 
	\textnormal{in } \bm{H}^1_{\perp} (\Omega). 
\end{equation*}
\par 
\medskip 
In the next step, we want to establish strong convergence of $\bar{\bm{u}}^h$ in $L^2(0, T_*; \bm{H}^2(\Omega)$. Testing and integrating \eqref{el_elastic_approximate_1} in time yields 
\begin{equation}\label{eq:elastic_apprximate_3}
	\int_0^{T_*} \int_\Omega 
	\big[
	\C_{\nu}(\chi^h_{\lfloor \sfrac{t}{h} \rfloor }) \E\big(\tfrac{\bar{\bm{u}} - \bm{u}_{\lfloor \sfrac{t}{h} \rfloor }^h}{t - \lfloor \sfrac{t}{h} \rfloor h }\big) 
	+ \WE ({\bar{\chi}}^h, \E({\bar{\bm{u}}^h}))
	\big] 
	\colon \E(\bm{v}) 
	+ \nabla^2 {\bar{\bm{u}}}^h\colon  \nabla^2 \bm{v} \dx \dt  = 0
\end{equation}
for all $h > 0$ and all $\bm{v} \in L^2(0, T_*; \bm{H}^2(\Omega))$. The \textit{a priori} estimate \eqref{est:a_priori_mm} and the Banach-Alaoglu theorem further imply  
\begin{align*}
	\bar{\bm{u}}^h &\xrightharpoonup{*} \bm{u} \quad \textnormal{in } L^\infty(0, T_*; \bm{H}^2(\Omega)),\\ 
	\tfrac{\bar{\bm{u}} - \bm{u}_{\lfloor \sfrac{t}{h} \rfloor }^h}{t - \lfloor \sfrac{t}{h} \rfloor h }
	&\rightharpoonup
	\mathfrak{\bm{U}} 
	\quad \textnormal{in } L^2(0, T_*; \bm{H}^1_{\perp}(\Omega)) \numberthis \label{eq:u_pot_conv}
\end{align*}
for some function $\mathfrak{U} \in L^2(0, T; \bm{H}^1_{\perp}(\Omega))$. In particular, these convergences, along with \eqref{eq:chi_conv_strong}, allow us to pass to the limit in \eqref{eq:elastic_apprximate_3}, leading to 
\begin{equation}\label{eq:elastic_limit_1}
	\int_0^{T_*} \int_\Omega 
	\big[
	\C_{\nu}(\chi) \E(\mathfrak{\bm{U}}) 
	+ \WE (\chi, \E(\bm{u}))
	\big] 
	\colon \E(\bm{v}) 
	+ \nabla^2 \bm{u}  \colon \nabla^2 \bm{v} \dx \dt  = 0
\end{equation}
for all $\bm{v} \in L^2(0, T_*; \bm{H}^2(\Omega))$. Hence, we can first test both this equation and \eqref{eq:elastic_apprximate_3} with the difference $\bm{u} - \bar{\bm{u}}^h \in L^2(0, T_*; \bm{H}^2(\Omega))$ and arrive after subtracting the resulting identities from each other at
\begin{align*}
	&\int_0^{T_*} \int_\Omega  \abs{\nabla^2( \bm{u} - \bar{\bm{u}}^h )}^2 \dx \dt = 
	\int_0^{T_*} \int_\Omega 
	\begin{aligned}[t]
		\Big[
		\C_{\nu}&(\chi^h_{\lfloor \sfrac{t}{h} \rfloor }) \E\big(\tfrac{\bar{\bm{u}} - \bm{u}_{\lfloor \sfrac{t}{h} \rfloor }^h}{t - \lfloor \sfrac{t}{h} \rfloor h }\big)
		- \C_{\nu}(\chi) \E(\mathfrak{\bm{U}})\\
		&+ \WE ({\bar{\chi}}^h, \E({\bar{\bm{u}}^h}))
		- \WE (\chi, \E(\bm{u})) 
		\Big]  
		\colon \E( \bm{u} - \bar{\bm{u}}^h )\dx \dt . 
	\end{aligned}		
\end{align*} 
Observe that the first two terms on the right-hand side are bounded in $L^2(0, T_*; \bm{L}^2(\Omega))$ due to \eqref{est:a_priori_mm}.
Moreover, we have already shown $\E( \bm{u} - \bar{\bm{u}}^h ) \to \bm{0}$ in $L^2(0, T_*; \bm{L}^2(\Omega))$, entailing that  
\begin{equation}\label{eq:u_conv_H2}
	\bar{\bm{u}}^h \to \bm{u} \quad \textnormal{in } L^2(0, T; \bm{H}^2(\Omega)). 
\end{equation}
Finally, let us conclude that $\mathfrak{\bm{U}}$ and $\pt \bm{u}$ actually coincide. As we have done for \eqref{el_elastic_approximate}, we may also test \eqref{el_elastic_approximate}, integrate in time and pass to the limit, obtaining 
\begin{align*}
	\int_0^{T_*}\int_\Omega 
	\big[\C_{\nu}(\chi) \E ( \pt \bm{u}) 
	+ \WE ({\chi}, \E({\bm{u}}))
	\big] 
	\colon \E(\bm{v}) 
	+ \nabla^2 {\bm{u}}\colon \nabla^2 \bm{v} \dx \dt  = 0
\end{align*}
for all $\bm{v} \in L^2(0, T_*; \bm{H}^2(\Omega))$. Subtracting \eqref{eq:elastic_limit_1} from this identity therefore leads to 
\begin{equation}\label{eq:u_limit_reduced}
	\int_0^{T_*}\int_\Omega 
	\C_{\nu}(\chi)\big[  \E ( \pt \bm{u} - \mathfrak{\bm{U}}) 
	\big] 
	\colon \E(\bm{v}) 
	\dx \dt  = 0
\end{equation}
for all $\bm{v} \in L^2(0, T_*; \bm{H}^2(\Omega))$. Since $\bm{H}^2(\Omega) \cap \bm{H}^1_{\perp}(\Omega)$ is dense in $\bm{H}^1_{\perp}(\Omega)$ and separable, the uniqueness of solutions to elliptic problems yields that $\pt \bm{u}(t)$ and $\mathfrak{\bm{U}}(t)$ must already coincide in $\bm{H}^1_{\perp}(\Omega)$ for a.e.\ $t \in (0, T_*)$. 

\subsubsection*{Limit varifold}

Due to the uniform estimate \eqref{est:a_priori_mm}, weak$^*$-compactness of finite Radon measures, cf.\ \cite[Thm.\ 1.54]{ambrosio2000functions}, implies that 
\begin{equation}\label{eq:conv_mu_measure}
	\begin{alignedat}{2}
		\mathcal{L}^1{}_{\llcorner}(0, T_*) \otimes (\mu^{\Omega, h}_t)_{t \in (0, T_*)} 
		&\xrightharpoonup{*} 
		\mu^{\Omega} 
		\quad 
		&&\textnormal{in } 
		M((0, T_*) \times {\overline{\Omega}} \times \mathbb{S}^{d-1}), \\
		\mathcal{L}^1{}_{\llcorner}(0, T_*) \otimes (\mu^{\partial \Omega, h}_t)_{t \in (0, T_*)} 
		&\xrightharpoonup{*} 
		\mu^{\pO}
		\quad 
		&&\textnormal{in } 
		M((0, T_*) \times \pO \times \mathbb{S}^{d-1}), 
	\end{alignedat}
\end{equation} 
along a not relabeled subsequence of $h \searrow 0$. At this point, however, it is not entirely obvious that the measures $\mu^\Omega, \mu^{\pO}$ can be disintegrated into slices in time such that $\mu^{\Omega} = \mathcal{L}^1{}_{\llcorner}(0, T_*) \otimes (\mu^{\Omega}_t)_{t \in(0, T_*)}$ and $\mu^{\partial \Omega} = \mathcal{L}^1{}_{\llcorner}(0, T_*) \otimes (\mu^{\partial \Omega}_t)_{t \in(0, T_*)}$, respectively . To see this, let us suppose that there exists a non-negative real function $t \mapsto e_\mu (t) \in L^1(0, T_*)$ such that 
\begin{equation}\label{eq:conv_point_mass_measure}
	| \mu^h_t |(\overline{\Omega})  \to e_\mu (t) \quad \textnormal{for a.e.\  } t \in (0, T_*). 
\end{equation}
That this assumption is indeed true, along a suitably chosen subsequence, will be shown after \eqref{eq:mu_mass_point}. The assertion then follows from similar arguments as in \cite[Lem.\ 2]{MR4904499}. Focusing on $\mu^\Omega$, we repeat the proof here for the sake of the reader. \\
Let $\eta \in C^0_c((0, T_*))$ be non-negative, i.e., $\eta \geq 0$, and $\zeta \in C^0_c(\overline{\Omega} \times \mathbb{S}^{d-1})$ with $\zeta  \in [0, 1]$. 
The fact that $\zeta \leq 1$ immediately implies 
\begin{equation*}
	\int_{(0, T_*) \times \overline{\Omega} \times \mathbb{S}^{d-1}} \eta(t) \zeta(\bm{x}, \bm{s}) \, d(\mathcal{L}^1{}_{\llcorner}(0, T_*) \otimes \mu^{\Omega, h}_t ) (t, \bm{x}, \bm{s}) 
	\leq 
	\int_0^{T_*} \eta(t)\,  d |\mu^h_t| (\overline{\Omega}) \dt, 
\end{equation*} 
where we also exploit that the second marginal of $\mu^{\Omega, h}_t$ is a probability measure on $\mathbb{S}^{d-1}$. Due to \eqref{eq:conv_mu_measure}, we can pass to the limit on the left-hand side; for the right-hand side, the pointwise convergence \eqref{eq:conv_point_mass_measure} and the dominated convergence theorem then entail 
\begin{equation*}
	\int_{(0, T_*) \times \overline{\Omega} \times \mathbb{S}^{d-1}} \eta(t) \zeta(\bm{x}) \xi(\bm{s}) \, d\mu^{\Omega}(t, \bm{x}, \bm{s}) 
	\leq 
	\int_0^{T_*} \eta(t) e_{\mu}(t) \dt. 
\end{equation*}
Moreover, by monotone convergence \cite[Thm.\ 1.19]{ambrosio2000functions}, it even holds that 
\begin{equation}\label{eq:abs_cont_measures}
	\int_{(0, T_*) \times \overline{\Omega} \times \mathbb{S}^{d-1}} \eta(t)  \, d\mu^\Omega(t, \bm{x}, \bm{s}) 
	\leq 
	\int_0^{T_*} \eta(t) e_{\mu}(t) \dt. 
\end{equation}
Observe that the left-hand side in this inequality is the marginal of $\mu^\Omega$ with respect to $\overline{\Omega}\times \mathbb{S}^{d-1}$. Noting that the right-hand side is finite for any non-negative $\eta \in C^0_c((0, T_*))$, we deduce that this marginal is a Radon measure on $(0, T_*)$. Applying standard disintegration results, see Theorem \ref{thm:disintegration}, we find that there exists a Radon measure $\sigma$ on $(0, T_*)$ and a family of Radon probability measures $(\omega_t)_{t \in (0; T_*)}$ on $\overline{\Omega} \times \mathbb{S}^{d-1}$ such that $\mu^{\Omega} = \sigma \otimes (\omega_t)_{t \in (0; T_*)}$. The inequality \eqref{eq:abs_cont_measures} therefore asserts that $\sigma \leq e \mathcal{L}^1{}_\llcorner (0, T_*)$; in particular, $\sigma$ is absolutely continuous with respect to the Lebesgue measure, allowing us to apply the Radon--Nikod$\acute{\textnormal{y}}$m theorem \cite[Thm.\ 1.28]{ambrosio2000functions} to find an $\mathcal{L}^1$-measurable function $\rho$ on $(0, T_*)$ with $\rho(t) \leq e_\mu (t)$ for a.e.\  $t \in (0, T_*)$ such that $\sigma = \rho \mathcal{L}^1{}_\llcorner (0, T_*)$. Hence, the limit measure $\mu^\Omega$ is indeed of the form $\mu^{\Omega} = \mathcal{L}^1{}_\llcorner (0, T_*) \otimes  (\mu_t)_{t \in(0, T_*)}$ with $\mu_t^\Omega = \rho(t) \omega_t$. \\
Likewise the assertion follows for $\mu^{\partial \Omega}$. Applying the disintegration theorem once more, we find that the time slices $\mu^\Omega_t, \mu^{\pO}_t$ may further be decomposed as 
\begin{alignat*}{2}
	\mu_t^\Omega &= |\mu_t^\Omega| \otimes (\mu^\Omega_{t, \bm{x}})_{\bm{x} \in \overline{\Omega}}
	\quad && \in  M(\overline{\Omega} \times\S),\\ 
	\mu_t^{\pO} &= |\mu_t^{\pO}| \otimes (\mu^{\pO}_{t, \bm{x}})_{\bm{x} \in \partial \Omega}
	\quad && \in M(\po \times \S), 
\end{alignat*}
where $(\mu^\Omega_{t, \bm{x}})_{\bm{x} \in \overline{\Omega}}$, $(\mu^\Omega_{t, \bm{x}})_{\bm{x} \in \pO}$ are  families of Radon probability measures on $\mathbb{S}^{d-1}$. 
Using this information about $\mu^{\pO}$, we can even show that there exists a non-negative function $g \in L^\infty((0, T_*) \times \po; [0, 1])$ such that $\mu^{\partial \Omega} = \mathcal{L}^1{}_{\llcorner}(0, T_*) \otimes g_t \mathcal{H}^{d-1} \llcorner\po \otimes (\delta_{\bm{n}_{\po}(\bm{x})} )_{\bm{x} \in \po}$. Indeed, it holds that, 
$BV(\Omega) \hookrightarrow L^1(\partial \Omega)$, \cite[Thm.\ 3.87]{ambrosio2000functions}, and since $\bar{\chi}^h \in BV(\Omega; \{0, 1\})$ is a characteristic function, one deduces that the trace is non-negative and satisfies $\bar{\chi}^h(\bm{x}) = \textnormal{Tr}_{\po} \bar{\chi}^h(\bm{x}) \leq 1$ for almost all $\bm{x} \in \po$, see \cite[Eq.\ (3.86)]{ambrosio2000functions}. This in turn implies that $\bar{\chi}^h \in L^\infty( (0, T_*); L^\infty(\po; [0, 1]))$ for all $h > 0$, which already entails a uniform bound. Thus, there exists some $g \in  L^\infty( (0, T_*); L^\infty(\po; [0, 1]))$ such that 
\begin{equation*}
	\bar{\chi}^h \xrightharpoonup{*} g \quad \textnormal{in} \quad L^\infty(0, T; L^p(\po; [0, 1])), 
	\quad \textnormal{for any fixed } 1 < p < \infty
\end{equation*}
along a not relabeled subsequence. We note that $0 \leq g(t, \bm{x}) \leq 1$ is an immediate consequence of the fact that in Banach spaces, closed and convex sets are also closed with respect to the weak$^*$ topology.\\ 
For any $\xi \in C^0( (0, {T_*}) \times \po \times \S)$ the convergence of $\mu^{\po, h}$ in measure and the disintegrated representation shown above yield
\begin{equation*}
	\int_0^{T_*} \int_{\po} \int_{\S} \xi(t, \bm{x}, \bm{s}) \, d\mu^{\po, h}_t(\bm{x}, \bm{s}) \dt 
	\to 
	\int_0^{T_*} \int_{\po} \int_{\S} \xi(t, \bm{x}, \bm{s}) \, d\mu^{\po}_{t, \bm{x}} (\bm{s})\  d |\mu^{\po}_t|(\bm{x}) \dt . 
\end{equation*}
Conversely, by the definition \eqref{eq:def_mu_boundary} of $\mu^{h, \po}$ and the convergence of the traces of $\bar{\chi}^h$ we also have 
\begin{align*}
	\int_0^{T_*}\int_{\po} \int_{\S} \xi(t, \bm{x}, \bm{s}) \, d\mu^{\po, h}_t(\bm{x}, \bm{s}) \dt  
	&= \int_0^{T_*} \int_{\po} \xi(t, \bm{x}, \bm{n}_{\po(\bm{x})}) \bar{\chi}^h (t, \bm{x}) \dH \dt \\
	\to  \int_0^{T_*}  \int_{\po} \xi(t, \bm{x}, \bm{n}_{\po(\bm{x})}) g (t, \bm{x}) \dH \dt 
	&=  \int_0^{T_*}  \int_{\po} \int_{\S} \xi(t, \bm{x},\bm{s}) \, d\delta_{ \bm{n}_{\po(\bm{x})}} \,  g_t (\bm{x})  \dH \dt. 
\end{align*}
Comparing these two limits, we deduce the asserted structure of $\mu^{\po}$.\\
Finally, we extend $\mu_t^{\po}$ to a measure in $M(\overline{\Omega} \times \S)$ by setting $\mu^{\po}_t(A) \coloneqq \mu_t^{\po} (A \cap (\po \times \S))$ for any Borel set $A \subset \overline{\Omega} \times \S$ and define 
\begin{equation*}
	\mu \coloneqq \mathcal{L}^1{}_{\llcorner}(0, T_*) \otimes (\mu_t)_{t \in (0, {T_*})}
	\quad \textnormal{with}\quad 
	\mu_t = \mu_t^\Omega + (\cos \alpha) \mu_t^{\pO} \in M(\overline{\Omega} \times \S).  
\end{equation*}
In particular, by the weak* convergence \eqref{eq:conv_mu_measure} and linearity of these limits one can further show that 
\begin{equation}\label{eq:conv_mu_sum}
	\mu^h \xrightharpoonup{*} \mu \quad \textnormal{in} \quad M((0, T_*) \times \overline{\Omega} \times \S). 
\end{equation}

\subsubsection*{Energy}
We consider the following family of non-increasing (cf.\ \eqref{eq:F_non-increasing}) mappings
\begin{equation*}
	(0, T_*) \ni t 
	\mapsto 
	e^h(t)
	\coloneqq
	\int_{\overline{\Omega}} d|\mu^h_t| 
	+ \int_\Omega W(\bar{\chi}^h, \bar{\bm{u}}^h)(t) \dx 
	+ \int_\Omega \tfrac{1}{2} | \nabla^2 \bar{\bm{u}}^h |^2(t) \dx.
\end{equation*}
By Helly's selection theorem, we may find a non-increasing map $e : (0, T_*) \to [0, \infty)$ such that 
\begin{equation}\label{eq:energy_conv_point}
	e^h(t) \to e(t), \quad \textnormal{for all } t \in (0, T_*),
\end{equation}
along a not relabeled subsequence of $h \searrow 0$. Note that this can be done without compromising the compactness results we have derived above, and observe that \eqref{eq:chi_conv_strong}, \eqref{eq:u_conv_strong} and \eqref{eq:u_conv_H2} already entail 
\begin{equation*}
	\int_\Omega W(\bar{\chi}^h, \bar{u})^h(t) \dx 
	+ \int_\Omega \tfrac{1}{2} | \nabla^2 \bar{\bm{u}}^h |^2(t) \dx 
	\xrightarrow{h \to 0}
	\int_\Omega W({\chi}, {u})(t) \dx 
	+ \int_\Omega \tfrac{1}{2} | \nabla^2 {\bm{u}} |^2(t) \dx
	\quad \textnormal{for a.e.\ } t \in (0, T_*). 
\end{equation*}
Thus, it follows that 
\begin{equation}\label{eq:mu_mass_point}
	\int_{\overline{\Omega}} d|\mu^h_t|  \xrightarrow{h \to 0} e_\mu(t) \coloneqq e(t) 
	- \int_\Omega W({\chi}, {u})(t) \dx 
	- \int_\Omega \tfrac{1}{2} | \nabla^2 {\bm{u}} |^2(t) \dx \quad \textnormal{for a.e.\ } t \in (0, T_*). 
\end{equation}
The definition of $W$ and the fact that $\bm{u} \in L^\infty(0, T_*; \bm{H}^2(\Omega))$ then imply $e_\mu \in L^\infty(0, T_*)$.
Hence, the time-sliced representation of $\mu$ as $\mathcal{L}^1{}_{\llcorner}(0, T_*) \otimes (\mu_t)_{t \in(0, T_*)}$ is a consequence of the arguments following \eqref{eq:conv_point_mass_measure}, but that merely allows us to conclude that $e_\mu(t) \geq |\mu_t^\Omega| (\overline{\Omega})$ and $e_\mu(t) \geq |\mu_t^{\po}| (\overline{\Omega})$ for almost every  $t \in (0, T_*)$. To obtain equality for the sum, let $\eta \in C^\infty_c(0, T_*)$; using that $|\mu^h_t| (\overline{\Omega})$ is uniformly bounded and converges pointwise almost everywhere, see \eqref{est:a_priori_mm} and \eqref{eq:mu_mass_point}, respectively, we can invoke dominated convergence to obtain 
\begin{equation*}
	\int_0^{T_*} |\mu^h_t| (\overline{\Omega}) \, \eta(t) \dt 
	\to 
	\int_0^{T_*} e_\mu(t)\, \eta(t) \dt. 
\end{equation*}
Conversely, weak convergence of $\mu^{\Omega, h}$ and $\mu^{\po, h}$ as measures, see \eqref{eq:conv_mu_measure}, and the definition of $\mu$ yield
\begin{align*}
	\int_0^{T_*} |\mu^h_t| (\overline{\Omega})\,  \eta(t) \dt  
	&=\begin{aligned}[t]
		&  \int_{ (0, T_*) \times \Omega \times \mathbb{S}^{d-1}} \eta(t) \, 
		d(\mathcal{L}^1{}_{\llcorner}(0, T_*) \otimes \mu^{h, \Omega}_t ) (t, \bm{x}, \bm{s})\\ 
		&+  (\cos \alpha) \int_{ (0, T_*) \times \po \times \mathbb{S}^{d-1}} \eta(t) \, 
		d(\mathcal{L}^1{}_{\llcorner}(0, T_*) \otimes \mu^{h, \po }_t ) (t, \bm{x}, \bm{s})
	\end{aligned}
	\\
	&\to
	\int_0^{T_*}   \abs{\mu^\Omega_t}(\overline{\Omega}) +  (\cos \alpha) \abs{\mu^{\po}_t}(\overline{\Omega}) \dt 
	= \int_0^{T_*} |\mu_t| (\overline{\Omega})\,  \eta(t) \dt . 
\end{align*}
By comparing these two results and appealing to the fundamental lemma of the calculus of variations, we conclude $ \abs{\mu^\Omega_t}(\overline{\Omega}) +  (\cos \alpha) \abs{\mu^{\po}_t}(\overline{\Omega})  = |\mu_t| (\overline{\Omega}) = e_\mu(t)$ for a.e.\ $t \in (0, T_*)$.
\par 
\medskip

\subsection{Limit process}

\subsubsection*{Energy dissipation inequality} 
The goal of this subsection to pass to the limit in the approximate energy dissipation inequality \eqref{eq:dissipation_approximate}. To this end, let us first investigate the following claim.
\par 
\medskip \noindent
\textit{Claim:} For all $0 < \kappa < \tau < T_*$, and a not relabeled subsequence of $h \searrow 0$, it holds that 
\begin{equation}\label{eq:liminf_pot_u}
	\begin{aligned}[t]
		\int_s^\tau \norm{\pt \bm{u}}_{H^1_{\chi(t)}}^2 \dt 
		&\leq 
		\underset{h \searrow 0}{\textnormal{liminf}}
		\int_s^\tau \norm{\pt \hat{\bm{u}}^h}_{\bm{H}^1_{\chi^h(t)}}^2 \dt, \\ 
		\int_s^\tau \norm{\pt \bm{u}}_{H^1_{\chi(t)}}^2 \dt 
		= \int_s^\tau \norm{{\mathfrak{U}}}_{H^1_{\chi(t)}}^2 \dt 
		&\leq 
		\underset{h \searrow 0}{\textnormal{liminf}}
		\int_s^\tau \norm{\tfrac{\bar{\bm{u}}^h   -  \bar{\bm{u}}^h ( \lfloor \sfrac{t}{h} \rfloor h)}{ {t - \lfloor \sfrac{t}{h} \rfloor h }}}_{\bm{H}^1_{\chi^h(t)}}^2 \dt . 
	\end{aligned}
\end{equation}

\par 
\medskip \noindent 
\textit{Proof of claim:} This result will follow immediately from \cite[Cor.\ 4.4]{MR2425653} once we have show that for almost all $t \in (0, T_*)$ and all $\bm{v} \in \bm{H}^{1}_{\perp}(\Omega)$ one has
\begin{equation}\numberthis \label{eq:liminf_norms_n}
	\norm{\bm{v}}_{\bm{H}^1_{\chi(t)}}^2 \leq \inf \Big\{\underset{n \to \infty }{\textnormal{liminf}} \norm{\bm{v}_n}_{\bm{H}^1_{\chi^{h_n}(t)}}^2 \colon  \bm{v}_n \rightharpoonup \bm{v} 
	\textnormal{ in } \bm{H}^{1}_{\perp}(\Omega)
	\Big\}, 
\end{equation}
where $(h_n)_{n \in \N}$ denotes a subsequence of $h\searrow 0$ that does not depend on $t \in (0, T_*)$. For the sake of readability we will use $\norm{\cdot }_{\bm{H}^1_{\chi_n(t)}} \coloneqq \norm{\cdot }_{\bm{H}^1_{\chi^{h_n}(t)}}$ in subsequent arguments. Denoting by $(\cdot, \cdot )_{\bm{H}^1_{\chi_n(t)}}$ for the inner product that induces the norm $\norm{\cdot }_{\bm{H}^1_{\chi_n(t)}}$, one finds 
\begin{align*}
	\norm{\bm{v}_n}_{\bm{H}^1_{\chi_n(t)}}^2
	= \langle\bm{v}_n, \bm{v}_n \rangle_{\bm{H}^1_{\chi_n(t)}}
	&= \langle\bm{v}, \bm{v} \rangle_{\bm{H}^1_{\chi_n(t)}}
	+ 2 \langle\bm{v} , \bm{v}_n- \bm{v} \rangle_{\bm{H}^1_{\chi_n(t)}}
	+ \langle \bm{v}_n- \bm{v}  , \bm{v}_n- \bm{v} \rangle_{\bm{H}^1_{\chi_n(t)}} . 
\end{align*}
Due to strong convergence of the order parameter $\chi^h$ in $L^2(0, T_*; L^2(\Omega)) \cong L^2(\Omega_{T_*})$ see \eqref{eq:chi_conv_strong}, we can assume that $\chi^h \to \chi$ pointwise almost everywhere in $\Omega_{T_*}$. In particular, for almost all $t \in (0, T_*)$ we have pointwise convergence almost everywhere in $\Omega$. Invoking Fatou's lemma \cite[Thm.\ 1.20]{ambrosio2000functions} therefore implies 
\begin{align*}
	\underset{n \to \infty}{\textnormal{liminf }} \langle\bm{v}, \bm{v} \rangle_{\bm{H}^1_{\chi_n(t)}} 
	= \underset{n \to \infty}{\textnormal{liminf }} 
	\int_\Omega \C_\nu (\chi^{h_n}(t)) \E(\bm{v}) \colon \E(\bm{v}) \dx 
	\geq \int_\Omega \C_\nu (\chi(t)) \E(\bm{v}) \colon \E(\bm{v}) \dx 
	= \norm{\bm{v}}_{\bm{H}^1_{\chi(t)}}^2 . 
\end{align*}
Thus, superadditivity of the limit inferior and the weak convergence $\bm{v}_n- \bm{v} \rightharpoonup \bm{0}$, along with boudedness of weakly convergent sequences and the almost everywhere convergence of $\chi^{h_n} \to \chi$, yield 
\begin{align*}
	\underset{n \to \infty}{\textnormal{liminf }} \norm{\bm{v}_n}_{\bm{H}^1_{\chi_n(t)}}^2
	&= 	\underset{n \to \infty}{\textnormal{liminf }} \Big( 
	\langle\bm{v}, \bm{v} \rangle_{\bm{H}^1_{\chi_n(t)}}
	+ 2 \langle\bm{v} , \bm{v}_n- \bm{v} \rangle_{\bm{H}^1_{\chi_n(t)}}
	+ \langle \bm{v}_n- \bm{v}  , \bm{v}_n- \bm{v} \rangle_{\bm{H}^1_{\chi_n(t)}}
	\Big)\\ 
	& \geq 
	\norm{\bm{v}}_{\bm{H}^1_{\chi(t)}}^2 
	+ \underset{n \to \infty}{\textnormal{liminf }} \langle \bm{v}_n- \bm{v}  , \bm{v}_n- \bm{v} \rangle_{\bm{H}^1_{\chi_n(t)}},
\end{align*}
where the last term is certainly non-negative. This shows that the inequality in \eqref{eq:liminf_norms_n} holds for any sequence $(\bm{v}_n)_{n \in \N}$ such that $\bm{v}_n \rightharpoonup \bm{v}$ and therefore in particular for the infiumum, as asserted. \\
Convexity and continuity of norms, along with the weak convergences of $\pt \bar{\bm{u}}^h$ and $\tfrac{\bar{\bm{u}}^h   -  \bar{\bm{u}}^h ( \lfloor \sfrac{t}{h} \rfloor h)}{ {t - \lfloor \sfrac{t}{h} \rfloor h }}$ 
in $L^2(0, T_*; \bm{H}^1_{\perp}(\Omega))$ then allow us to apply  \cite[Cor.\ 4.4]{MR2425653} and deduce the assertion. 
\hfill $\diamondsuit$
\par 
\medskip 
Recall that we have already shown, see \eqref{eq:energy_conv_point} and succeeding arguments, that for almost all $t \in (0, T_*)$ the energy converges pointwise, i.e., 
\begin{equation*}
	\int_{\overline{\Omega}} d|\mu^h_t| 
	+ \int_\Omega W(\bar{\chi}^h, \bar{u}^h)(t) \dx 
	+ \int_\Omega \tfrac{1}{2} | \nabla^2 \bar{\bm{u}}^h |^2(t) \dx 
	\to 
	\int_\Omega d|\mu_t| 
	+ \int_\Omega W(\chi, \bm{u})(t) \dx 
	+ \int_\Omega \tfrac{1}{2} | \nabla^2 \bm{u} |^2(t) \dx. 
\end{equation*}
For the sake of brevity, and in a slight abuse of notation, let us write 
\begin{equation*}
	\F(\mu, \chi, \bm{u}) (t) 
	\coloneqq 
	\int_{\overline{\Omega}} d|\mu_t| 
	+ \int_\Omega W(\chi, \bm{u})(t) \dx 
	+ \int_\Omega \tfrac{1}{2} | \nabla^2 \bm{u} |^2(t) \dx. 
\end{equation*}
Along with \eqref{eq:liminf_pot_u}, the weak convergence of $w^h$ and $\pt \hat{\chi}^h$, see \eqref{eq:conv_w} and \eqref{eq:conv_dt_chi} respectively, combined with weak lower semicontinuity of norms, we can first pass to the limit in \eqref{eq:dissipation_approximate_w} for $h \searrow 0$ and than take $\tau \nearrow T$ along with $\kappa \searrow s$ to obtain 
\begin{align*}
	\F(\mu, \chi, \bm{u}) (T)
	&+ \frac{1}{2}   \int_s^T
	\norm{\pt \chi}_{H^{-1}_{(0)}}^2 
	+  \norm{ \pt \bm{u}}^2_{\bm{H}^1_{{\chi}(t) }}  \dt \\ 
	&+ \frac{1}{2}   \int_s^T 
	\norm{ \nabla w  }^2_{L^2}  
	+   \norm{ \pt \bm{u}}^2_{\bm{H}^1_{{\chi}(t) }}  \dt 
	\leq \F(\mu, \chi, \bm{u}) (s) \leq \F(\chi_0, \bm{u}_0)
\end{align*}
for all almost all $0 < s < T < T_*$.\\
We emphasize that the weak limits of $\pt \hat{\bm{u}}^h$ and $\tfrac{\bar{\bm{u}}^h   -  \bar{\bm{u}}^h ( \lfloor \sfrac{t}{h} \rfloor h)  }{ {t - \lfloor \sfrac{t}{h} \rfloor h } }$ coincide, see  \eqref{eq:u_limit_reduced}. This explains why $\pt \bm{u}$ appears twice and we do not obtain two different terms similar to $\pt \chi$ and $\nabla w$. 
\par 

Lastly, the measureability of the total mass measures is a direct consequence of \eqref{eq:F_non-increasing}, since the second part of the energy $\int_\Omega W(\chi, \bm{u})(t) \dx + \int_\Omega \tfrac{1}{2} | \nabla^2 \bm{u} |^2(t) \dx$ is also measurable.

\subsubsection*{Weak formulation}  

Note that we have already established the weak formulation for the time derivative of the phase indicator \eqref{eq:dt_chi} and the elasticity equation \eqref{eq:elastic_thm} in \eqref{eq:conv_dt_chi} and \eqref{eq:elastic_limit_1}, respectively. After testing \eqref{el_w_appoximate} with any $\bm{B} \in C^\infty_c([0, T_*]; \bm{C}^2(\overline{\Omega}))$, $\bm{B}_{|\po}(t) \cdot \bm{n}_{\po} = 0$, and integrating over $(0, T_*)$, the compactness results in \eqref{eq:conv_w}, \eqref{eq:chi_conv_strong}, \eqref{eq:u_pot_conv}, \eqref{eq:u_conv_H2} and \eqref{eq:conv_mu_sum} allow us to pass to the limit in \eqref{el_w_appoximate}, and we obtain \eqref{eq:potential_thm} after substituting $\mathfrak{U}$ for $\pt \bm{u}$ and localizing (see also \cite[Eq.\ (118)]{hensel_stinson}).\\ 
Let $\bm{\eta} \in C^\infty_c(0, T_*; \bm{C}^1(\overline{\Omega}))$, $\bm{\eta}_{|\po}(t) \cdot \bm{n}_{\po} = 0$. By definition of $\mu^{\Omega, h}$, see \eqref{eq:def_mu_Omega}, it obviously holds for all $h > 0$ that 
\begin{align*}
	\int_0^{T_*} \int_\Omega  \bm{\eta} (t, \bm{x}) \cdot d\nabla \bar{\chi}^h(t) \dt 
	&=
	\int_0^{T_*} \int_{\Omega \times \mathbb{S}^{d-1}}  \bm{\eta}(t, \bm{x}) \cdot \bnu \,  d\abs{\nabla \bar{\chi}^h(t)} \dt \\
	&= \int_0^{T_*} \int_{\overline{\Omega} \times \mathbb{S}^{d-1}}  \bm{\eta}(t, \bm{x}) \cdot \bm{s} \,d\mu^{h, \Omega}_t(\bm{x}, \bm{s}) \dt . 
\end{align*}
By passing to the limit using \eqref{eq:chi_conv_strong} and \eqref{eq:conv_mu_measure}, we obtain a time integrated version of \eqref{eq:compatibility_thm}. Finally, we can exploit the separability of $\bm{C}^1(\overline{\Omega})$, apply the fundamental lemma of the calculus of variations and conclude the proof of \eqref{eq:compatibility_thm} by a density argument.\\ 
Moreover, let $\phi \in C^0(\overline{\Omega})$, and fix a vector field $\bm{\xi} \in \bm{C}^1(\overline{\Omega})$ with $\bm{\xi}_{|\po} \cdot \bm{n}_{\po} = \cos \alpha$; finally, let $\eta \in C^\infty_c( (0, T_*))$. By the definitions of $\mu^{\Omega, h}$ and $\mu^{\po, h}$, we find 
\begin{align*}
	&\int_0^{T_*} \eta(t) \int_{\overline{\Omega} \times \S} \phi(\bm{x}) \bm{\xi} (\bm{x}) \cdot \bm{s} \, d \mu_t^{h, \Omega} \dt 
	+ (\cos \alpha) \int_0^{T_*} \int_{\po} \phi(\bm{x}) \, d \abs{\mu_t^{h, \po}} \dt \\ 
	& \quad = 
	\int_0^{T_*} \eta(t) \int_{{\Omega}} \phi(\bm{x}) \bm{\xi} (\bm{x}) \cdot d \nabla\bar{\chi}^h \dt 
	+  \int_0^{T_*} \eta(t)  \int_{\po} \bar{\chi}^h (\bm{x})  \phi(\bm{x})  \bm{\xi} (\bm{x}) \cdot \bm{n}_{\po} \dH \dt \\ 
	&	\quad = - \int_0^{T_*} \eta(t) \int_\Omega \bar{\chi}^h \nabla \cdot (\phi \bm{\xi}) \dx \dt. 
\end{align*}
After passing to the limit, localizing, a continuity argument, and once again applying the formula for integration by parts of $BV$-functions, it follows for almost all $t \in (0, T_*)$, and any $\phi$ and $\bm{\xi}$ as above, that 
\begin{align*}
	& \int_{\overline{\Omega} \times \S} \phi(\bm{x}) \bm{\xi} (\bm{x}) \cdot \bm{s} \, d \mu_t^{ \Omega} 
	+ (\cos \alpha) \int_{\po} \phi(\bm{x}) \, d \abs{\mu_t^{ \po}}\\ 
	& \quad = 
	\int_{{\Omega} } \phi(\bm{x}) \bm{\xi} ( \bm{x}) \cdot d \nabla{\chi}
	+  \int_{\po} {\chi} (\bm{x})  \phi(t, \bm{x})  \bm{\xi} (\bm{x}) \cdot \bm{n}_{\po} \dH, 
\end{align*}
where we also used that $\chi \in L^\infty_{w^*}(0, T; BV(\Omega; \{0, 1\}))$ has a trace on the boundary $\po$. Letting $\phi$ be the identity then implies \eqref{eq:compatibility_thm}. On the other hand by letting $\bm{\xi}$ converge to $(\cos \alpha) \bm{n}_{\po}$ pointwise everywhere and varying $\phi \in C^0(\overline{\Omega})$ yields \eqref{eq:boundary_measure_inequaltiy}.

\subsection{Proof of further properties}\label{sec:further_prop}

\begin{proof}[Proof of Corollary \ref{cor:energy_dissipation}]
	The proof of this assertion follows Hensel and Stinson \cite[Step 9]{hensel_stinson}. 
	First of all, let us observe that a standard localization argument implies that \eqref{eq:elastic_thm} must already hold pointwise for almost all $t \in (0, T_*)$.
	Thus, consider $t \in (0, T_*)$ such that both \eqref{eq:potential_thm} and \eqref{eq:elastic_thm} hold. For $\bm{B} \in \mathcal{S}_{\chi(t)}$, let $w_{\bm{B}} \in H^1_{(0)}(\Omega)$ be the unique solution to the Neumann problem
	\begin{equation}\label{eq:potential_B}
		\left\{ 
		\begin{alignedat}{2}
			\Delta w_{\bm{B}} &= \bm{B} \cdot \nabla \chi (t, \cdot) \quad && \textnormal{in } \Omega, \\ 
			\nabla w_{\bm{B}} \cdot \bm{n}_{\po} &= 0 \quad && \textnormal{on } \po, 
		\end{alignedat}
		\right. 
	\end{equation} 
	where 
	\begin{equation*}
		{}_{H^{-1}_{(0)}}\langle \bm{B} \cdot \nabla {\chi}, \xi \rangle_{H^1_{(0)}} \coloneqq -  \int_\Omega {\chi} \ \div (\xi \bm{B}) \dx 
		\quad \textnormal{for all}\quad \xi \in H^1_{(0)}(\Omega). 
	\end{equation*} 
	Recalling the definition of the $H^{-1}_{(0)}$-norm, we find that $\norm{\nabla w_{\bm{B}}}_{L^2(\Omega)} = \norm{\bm{B} \cdot \nabla \chi(t, \cdot)}_{H^{-1}_{(0)}}$. Moreover, it holds that 
	\begin{equation}\label{eq:energy_dissipation_1}
		\norm{\nabla w(t, \cdot)}_{L^2}^2 \geq \bigg( \sup_{\bm{B} \in \mathcal{S}_\chi(t, \cdot)} \frac{\int_\Omega \nabla w (t, \cdot) \cdot \nabla w_{\bm{B}} \dx }{ \norm{\nabla w_{\bm{B}}}_{L^2} } \bigg)^2 . 
	\end{equation}
	Using \eqref{eq:potential_thm}, one finds the identity 
	\begin{align*}
		\int_\Omega& \nabla w \cdot \nabla w_{\bm{B}} \dx  = \int_\Omega \chi(t, \cdot)\,  \div \big( w\bm{B} \big) \dx \numberthis \label{eq:energy_dissipation_2}\\ 
		&= \int_{\overline{\Omega} \times \S} (\bm{I} - \bm{s} \otimes \bm{s}) \colon \nabla \bm{B} \, d \mu_t (\bm{x}, \bm{s}) 
		- \int_\Omega \C_{\nu}(\chi)  \E(\pt \bm{u})
		\colon \nabla^2 \bm{u} \bm{B}  \dx 
		+\int_\Omega \tfrac{1}{2} \abs{\nabla^2 \bm{u}}^2\  \div \bm{B} \dx \\ 
		&+  \int_\Omega \Big[ W(\chi, \E(\bm{u})) \bm{I} -  ( \nabla \bm{u} )^T \big( \C_{\nu}(\chi)  \E (\pt \bm{u}) +  W_{, \E} (\chi, \E(\bm{u})  \big)\Big] \colon \nabla \bm{B} \dx 
		\\ 
		&- \int_\Omega  (\nabla^2 \bm{u})_{ijk} (\nabla \bm{u})_{ip}\,(\nabla^2 \bm{B})_{pjk}  \dx 
		- \int_\Omega(\nabla^2 \bm{u})_{ijk}  \big( (\nabla^2 \bm{u})_{ipk}\,(\nabla\bm{B})_{jp} 
		+ (\nabla^2 \bm{u})_{ijp}\,(\nabla\bm{B})_{kp} \big) \dx. 
	\end{align*}
	Moreover, let $\bm{v} \in \bm{H}^1_\perp(\Omega) \cap \bm{H}^2(\Omega)$, then the pointwise variant of \eqref{eq:elastic_thm} implies 
	\begin{align*}
		-\int_\Omega 
		\C_{\nu}(\chi) \E ( \pt \bm{u}) 
		&\colon \E(\bm{v}  - \nabla \bm{u} \bm{B})  \dx \\ 
		&= 
		\int_\Omega 
		\C_{\nu}(\chi) \E(\pt \bm{u}) \colon \E (\nabla \bm{u} \bm{B}) + 
		\WE ({\chi}, \E({\bm{u}}))
		\colon \E(\bm{v}) 
		+ \nabla^2 {\bm{u}}\colon  \nabla^2 \bm{v} \dx, \label{eq:energy_dissipation_3} \numberthis 
	\end{align*}
	where we first tested with $\bm{v}$, and additionally added the term $ \int_\Omega \C_{\nu}(\chi) \E ( \pt \bm{u}) \colon \E(\nabla \bm{u} \bm{B})  \dx$ on both sides. 
	As previously noted, the mapping 
	\begin{equation*}
		(\bm{w}, \bm{v}) \mapsto \int_\Omega \C_{\nu}(\chi(t, \cdot)) \E(\bm{w}) \colon \E(\bm{v}) \dx
	\end{equation*}
	is a positive semi-definite bilinear form on $\bm{H}^1(\Omega)$. In particular, the Cauchy--Schwarz inequality yields  
	\begin{equation*}
		\int_\Omega \C_{\nu}(\chi(t, \cdot)) \E(\bm{w}) \colon \E(\bm{v}) \dx \leq \norm{\bm{w}}_{\bm{H}^1_{\chi(t)}}  \norm{\bm{v}}_{\bm{H}^1_{\chi(t)}} , 
	\end{equation*}
	and since $\bm{v} - \nabla \bm{u}\bm{B} \in \bm{H}^1(\Omega)$, we deduce 
	\begin{equation}\label{eq:energy_dissipation_4}
		\norm{\pt \bm{u}}_{\bm{H}^1_{\chi(t)}}^2 \geq  \Bigg( \sup_{\substack{\bm{v} \in \bm{H}^1_\perp(\Omega) \cap \bm{H}^2(\Omega) \\ \bm{B} \in \mathcal{S}_{\chi(t, \cdot)}}} \frac{- \int_\Omega 
			\C_{\nu}(\chi) \E ( \pt \bm{u}) 
			\colon \E(\bm{v}  - \nabla \bm{u} \bm{B})  \dx }{\norm{v - \nabla \bm{u}\bm{B}}_{\bm{H}^1_{\chi(t)}}}  \Bigg)^2 . 
	\end{equation}
	Combining \eqref{eq:energy_dissipation_1}-\eqref{eq:energy_dissipation_4}, along with the elementary relation $\tfrac{1}{2} (a/b)^2 \geq a- \tfrac{1}{2} b^2$, we conclude that   
	\begin{align*}
		\frac{1}{2}&\norm{\nabla w(t, \cdot)}_{L^2}^2 + \frac{1}{2} \norm{\pt \bm{u}(t, \cdot)}_{\bm{H}^1_{\chi(t)}}^2
		\numberthis \label{eq:energy_cor_1}\\ 
		&\geq \frac{1}{2} \Bigg( 
		\frac{\int_\Omega \nabla w (t, \cdot) \cdot \nabla w_{\bm{B}}}{ \norm{\nabla w_{\bm{B}}}_{L^2} } \bigg)^2
		+ \frac{1}{2} \bigg(
		\frac{-\int_\Omega 
			\C_{\nu}(\chi) \E ( \pt \bm{u})  \colon \E(\bm{v}  - \nabla \bm{u} \bm{B})  \dx }{\norm{v - \nabla \bm{u}\bm{B}}_{\bm{H}^1_{\chi(t)}}}  \Bigg)^2 \\ 
		&\geq \begin{aligned}[t]
			& \int_{\overline{\Omega} \times \mathbb{S}^{d-1}} (\bm{I} - \bm{s} \otimes \bm{s}) \colon \nabla \bm{B} \, d\mu (\bm{x}, \bm{s}) \\ 
			&+ \int_\Omega \big[ W(\chi, \E(\bm{u})) \bm{I} -  (\nabla {\bm{u}} )^T W_{, \E} (\chi, \E(\bm{u})  \big)\big] \colon \nabla \bm{B} \dx 
			+ \int_\Omega \tfrac{1}{2} \abs{\nabla^2 \bm{u}}^2\  \div \bm{B} \dx \\ 
			&-\int_\Omega  (\nabla^2 \bm{u})_{ijk} (\nabla \bm{u})_{ip}\,(\nabla^2 \bm{B})_{pjk}  \dx 
			- \int_\Omega(\nabla^2 \bm{u})_{ijk}   (\nabla^2 \bm{u})_{ipk}\,(\nabla\bm{B})_{jp}  \dx \\
			& - \int_\Omega(\nabla^2 \bm{u})_{ijk}  (\nabla^2 \bm{u})_{ijp}\,(\nabla\bm{B})_{kp}  \dx
			+ \int_\Omega \WE (\chi, \E(\bm{u})) \colon \nabla \bm{v} \dx 
			+\int_\Omega \nabla^2 \bm{u} \colon   \nabla^2 \bm{v} \dx\\ 
			& - \frac{1}{2} \norm{\bm{B}\cdot  \nabla \chi}_{H^{-1}_{(0)}}^2 - \frac{1}{2}\norm{\bm{v} - \nabla \bm{u}\bm{B}}_{\bm{H}^1_{\chi(t)}}^2. 
		\end{aligned}
	\end{align*}
	Noting that $\bm{B}$ and $\bm{v}$ were chosen arbitrarily and that this inequality holds for almost all $t \in (0, T_*)$, we can choose any measurable map $\bm{v} \colon (0, T_*) \to  \bm{H}^2(\Omega)$ with $\bm{v}(t) \in \bm{H}^1_\perp (\Omega)$ for all $t \in (0, T_*)$ and $\norm{\bm{v}(t)}_{\bm{H}^2(\Omega)} \in L^2(0, T_*)$, and any measurable function $\bm{B} \colon (0, T_*) \to \bm
	C^2(\overline{\Omega})$ such that $\bm{B}(t) \in \mathcal{S}_{\chi(t)}$ for all $t \in (0, T_*)$ and $\norm{\bm{B}}_{\bm{C}^2} \in L^2(0, T_*)$, and insert it into \eqref{eq:energy_dissipation_thm}, finishing the proof of \eqref{eq:energy_dissipation_cor}.\par 
	To show the second inequality \eqref{eq:energy_dissipation_cor_sup}, we note that since \eqref{eq:energy_cor_1} already holds for all $\bm{B} \in \mathcal{S}_{\chi(t, \cdot)}$ and all $\bm{v} \in \bm{H}^1_\perp(\Omega)\cap \bm{H}^2(\Omega)$, it follows by application of the formula for the first variation of $\F$, see \eqref{eq:first_variation_summary}, that 
	\begin{align*}
		\frac{1}{2}\norm{\nabla w(t, \cdot)}_{L^2}^2 &+ \frac{1}{2} \norm{\pt \bm{u}(t, \cdot)}_{\bm{H}^1_{\chi(t)}}^2  \\
		&\geq 
		\frac{1}{2}
		\sup_{\substack{\bm{B} \in \mathcal{S}_{\chi(t, \cdot)} \\ \bm{v} \in \bm{H}^1_\perp(\Omega) \cap \bm{H}^2(\Omega)}} 
		\Big\{  \delta \F(\mu, \chi, \bm{u} )[\bm{B}, \bm{v}] 
		- \frac{1}{2} \norm{\bm{B}\cdot  \nabla \chi}_{H^{-1}_{(0)}}^2 - \frac{1}{2}\norm{\bm{v} - \nabla \bm{u}\bm{B}}_{\bm{H}^1_{\chi(t)}}^2 \Big\} . 
	\end{align*}
	Thus, it remains to show that the right-hand side constitutes a measurable function in $t \in (0, T_*)$. Following \cite{hensel_stinson}, we first construct an auxiliary function $\bm{\xi}$, see \cite[Proof of Lem.\ 9]{hensel_stinson}, Proof of \eqref{eq:langrange_mult_sg} and \eqref{eq:lagrange_mult_mm}, such that $\bm{\xi} \in L^2(0, T_*; \bm{C}^2(\overline{\Omega}))$. More precisely, we consider the elliptic boundary value problem 
	\begin{equation*}
		\left\{ 
		\begin{alignedat}{2}
			\Delta \phi_\eps &= \chi_\eps - m_\eps \quad && \textnormal{in } \Omega, \\ 
			\nabla \phi_\eps \cdot \bm{n}_{\po } &= 0 \quad && \textnormal{on } \po, \\
			\dashint_\Omega \phi_\eps \dx &= 0, 
		\end{alignedat}
		\right. 
	\end{equation*}
	where $\chi_\eps \coloneqq \chi * \rho_\eps$ for a standard mollifier $\rho_\eps$, and $m_\eps \coloneqq \dashint_\Omega \chi_\eps \dx$; here, $\eps > 0$ is a suitably chosen parameter. Then, there exists a lifting $\phi_\eps \in L^2(0, T; C^3(\overline{\Omega}))$ to the the spatial boundary value problem in the precise sense that 
	\begin{equation*}
		- \int_0^{T_*} \int_\Omega \nabla \phi_\eps \cdot \nabla \eta \dx \dt 
		= \int_0^{T_*} \int_\Omega (\chi_\eps - m_\eps) \eta \dx \dt 
	\end{equation*}
	for all $\eta \in C^0_c(0, T_*; C^1(\overline{\Omega}))$. We conclude the construction by setting $\bm{\xi} \coloneqq \nabla \phi_\eps$ . As in \cite[Eq.\ (67)]{hensel_stinson}, the fact that $\chi \in L^\infty (0, T; BV(\Omega, \{0, 1\}))$, which implies a uniform bound for $|\nabla \chi| (\Omega)$, gives rise to some $\eps > 0$  and a constant $C(m_0, \Omega) > 0$, such that 
	\begin{equation*}
		\substack{\textnormal{\normalsize ess inf}\\ t \in (0, T_*)} \int_\Omega \chi(t, \cdot) \nabla \cdot \bm{\xi}(t, \cdot) \dx  
		\geq C(m_0, \Omega)
		> 0.  
	\end{equation*}
	Similar to previous arguments, cf.\ \eqref{eq:B_C^1_in_S}, we define for all $\bm{B} \in \bm{C}^2(\overline{\Omega})$ with $\bm{B}_{|\po} \cdot \bm{n}_{\po}= 0$ the function 
	\begin{equation*}
		t \mapsto \widetilde{\bm{B}}_{\bm{\xi}}(t, \cdot ) \coloneqq \bm{B} - \frac{ \int_\Omega \chi(t, \cdot) \nabla \cdot \bm{B} \dx }{ \int_\Omega \chi(t, \cdot) \nabla \cdot \bm{\xi} \dx} \bm{\xi} (t, \cdot) \in \mathcal{S}_{\chi(t, \cdot)}. 
	\end{equation*}
	Note that by definition \eqref{eq:def_S_chi}, this map is the identity if $\bm{B} \in \mathcal{S}_{\chi(t, \cdot)}$. In particular, we obtain that 
	\begin{align*}
		&\sup_{\substack{\bm{B} \in \mathcal{S}_{\chi(t, \cdot)} \\ \bm{v} \in \bm{H}^1_\perp(\Omega) \cap \bm{H}^2(\Omega)}} 
		\Big\{  \delta \F(\mu, \chi, \bm{u} )[\bm{B}, \bm{v}] 
		- \frac{1}{2} \norm{\bm{B}\cdot  \nabla \chi}_{H^{-1}_{(0)}}^2 - \frac{1}{2}\norm{\bm{v} - \nabla \bm{u}\bm{B}}_{\bm{H}^1_{\chi(t)}}^2 \Big\}\\ 
		&\quad =
		\sup_{\substack{\bm{B}\in \bm{C}^2(\overline{\Omega}), \bm{B}_{|\po} \cdot \bm{n}_{\po}= 0 \\ \bm{v} \in \bm{H}^1_\perp(\Omega) \cap \bm{H}^2(\Omega)}} 
		\Big\{  \delta \F(\mu, \chi, \bm{u} )[\widetilde{\bm{B}}_{\bm{\xi}}(t, \cdot ), \bm{v}] 
		- \frac{1}{2} \norm{\widetilde{\bm{B}}_{\bm{\xi}}(t, \cdot )\cdot  \nabla \chi}_{H^{-1}_{(0)}}^2 - \frac{1}{2}\norm{\bm{v} - \nabla \bm{u}\widetilde{\bm{B}}_{\bm{\xi}}(t, \cdot )}_{\bm{H}^1_{\chi(t)}}^2 \Big\}
	\end{align*}
	for almost all $t \in (0, T_*)$. By construction, we have $\widetilde{\bm{B}}_{\bm{\xi}}(t, \cdot ) \in L^2(0, T; \bm{C}^2(\overline{\Omega}))$, and deduce with the help of \eqref{eq:first_variation_summary} and \eqref{eq:potential_thm} that 
	\begin{align*}
		t \mapsto  &\delta \F(\mu, \chi, \bm{u} )[\widetilde{\bm{B}}_{\bm{\xi}}(t, \cdot ), \bm{v}] 
		= \int_\Omega \chi\,  \div(w \widetilde{\bm{B}}_{\bm{\xi}}(t, \cdot )) \dx 
		+ \int_\Omega \WE (\chi, \E(\bm{u})) \colon \nabla \bm{v} \dx 
		+\int_\Omega \nabla^2 \bm{u} \colon   \nabla^2 \bm{v} \dx 
	\end{align*}
	is measurable for any fixed $\bm{B} \in \bm{C}^2(\overline{\Omega})$ with $\bm{B}_{|\po} \cdot \bm{n}_{\po}= 0$ and any $\bm{v} \in \bm{H}^1_\perp(\Omega) \cap \bm{H}^2(\Omega)$. Likewise, measurability follows for 
	\begin{equation*}
		t \mapsto \norm{\bm{v} - \nabla \bm{u}\widetilde{\bm{B}}_{\bm{\xi}}(t, \cdot )}_{\bm{H}^1_{\chi(t)}}^2 
		= \int_\Omega \C_{\nu}(\chi) \E(\bm{v} - \nabla \bm{u}\widetilde{\bm{B}}_{\bm{\xi}}(t, \cdot )) \colon \E(\bm{v} - \nabla \bm{u}\widetilde{\bm{B}}_{\bm{\xi}}(t, \cdot )) \dx. 
	\end{equation*}
	Moreover, similar to \eqref{eq:potential_B}, we can find some $w_{\bm{B}} \in L^2(0, T; H^1_{(0)}(\Omega))$, which satisfies 
	\begin{equation*}
		- \int_0^{T_*} \int_\Omega \nabla w_{\bm{B}} \cdot \nabla \eta \dx \dt 
		= \int_0^{T_*} \int_\Omega \chi \, \div (\widetilde{\bm{B}}_{\bm{\xi}}(t, \cdot ) \eta) \dx \dt 
	\end{equation*}
	for all $\eta \in C^0_c(0, T_*; C^1(\overline{\Omega}))$, and conclude that $t \mapsto \norm{\widetilde{\bm{B}}_{\bm{\xi}}(t, \cdot )\cdot  \nabla \chi}_{H^{-1}_{(0)}}^2 = \norm{\nabla w_{\bm{B}} (t, \cdot)}_{L^2}^2$ is measurable. \\ 
	Lastly, we note that for any fixed $t \in (0, T_*)$ the term 
	\begin{equation*}
		\delta \F(\mu, \chi, \bm{u} )[\widetilde{\bm{B}}_{\bm{\xi}}(t, \cdot ), \bm{v}] 
		- \frac{1}{2} \norm{\widetilde{\bm{B}}_{\bm{\xi}}(t, \cdot )\cdot  \nabla \chi}_{H^{-1}_{(0)}}^2 - \frac{1}{2}\norm{\bm{v} - \nabla \bm{u}\widetilde{\bm{B}}_{\bm{\xi}}(t, \cdot )}_{\bm{H}^1_{\chi(t)}}^2
	\end{equation*}
	depends continuously on $\widetilde{\bm{B}}_{\bm{\xi}}$ and $\bm{v}$.\\ 
	Separability of the spaces $\bm{C}^2(\overline{\Omega})$ and $\bm{H}^2(\Omega) \cap \bm{H}^1_\perp(\Omega) \subset \bm{H}^2(\Omega)$ combined with the fact the pointwise supremum of a Carathéodory function over a countable set is measurable, cf.\  \cite[Prop.\ 2.7]{folland}, yields \eqref{eq:energy_dissipation_cor_sup}. 
\end{proof}

\begin{proof}[Proof of Lemma \ref{lem:relation_metric_slope}]
	Fix some $\bm{B} \in \mathcal{S}_\chi$ and $\bm{v}\in \bm{H}^1_{\perp}(\Omega) \cap \bm{H}^2(\Omega)$. Then, we want to consider
	\begin{align*}
		\limsup_{\tau \to 0}
		&\frac{  \big( \F(\mu, \chi, \bm{u}) - \F(\mu^\tau, \chi^\tau, (\bm{u} + \tau \bm{v})^\tau )   \big)_+ }{ \textnormal{d} \big( (\chi, \bm{u}),  (\chi^\tau, (\bm{u} + \tau \bm{v} )^\tau) \big)}
	\end{align*}
	We observe that for the first variation of our energy, it holds that cf.\ \cite[Step 4]{laux_chambolle}, \cite[Eq.\ (145)]{hensel_stinson}, 
	\begin{align*}
		&\lim_{\tau \to 0}   \tfrac{1}{\tau}  \big( \F(\mu,  \chi, \bm{u}) - \F( \mu^\tau, \chi^\tau, (\bm{u} + \tau \bm{v} )^\tau)   \big) 
		= \delta \F (\mu, \chi, \bm{u}) [\bm{B}, \bm{v}]. 
	\end{align*}
	For the explicit formulas and computations, see Section \ref{sec:gradient_flow}, Section~\ref{sec:euler_lagrange} and \eqref{eq:variation_varifold}. 
	Moreover, it was shown in \cite[Lem.\ 10]{hensel_stinson} that 
	\begin{equation*}
		\lim_{\tau \to 0}  \frac{\chi^\tau - \chi}{\tau} \to - \bm{B} \cdot \nabla \chi^h \quad \textnormal{in } H^{-1}_{(0)}(\Omega).
	\end{equation*}
	Lastly, we observe that 
	\begin{equation*}
		\lim_{\tau \to 0} \frac{ ( \bm{u} + \tau \bm{v})^\tau - \bm{u}}{\tau}   
		= \bm{v} - \nabla \bm{u} \bm{B}
		\quad \textnormal{in } \bm{H}^1(\Omega). 
	\end{equation*} 
	Along with the elementary inequality $\tfrac{1}{2} (\tfrac{a}{b})^2 \geq a - \tfrac{1}{2}b^2$, it follows that 
	\begin{align*}
		\limsup_{\tau \to 0}
		&\frac{  \big( \F(\mu, \chi, \bm{u}) - \F(\mu^\tau, \chi^\tau, (\bm{u} + \tau \bm{v})^\tau )   \big)_+ }{ \textnormal{d} \big( (\chi, \bm{u}),  (\chi^\tau, (\bm{u} + \tau \bm{v} )^\tau) \big)}
		= 
		\limsup_{\tau \to 0}
		\frac{\tfrac{1}{\tau}  \big( \F(\mu, \chi, \bm{u}) - \F(\mu^\tau, \chi^\tau, (\bm{u} + \tau \bm{v})^\tau )   \big)_+ }{\tfrac{1}{\tau} \textnormal{d} \big( (\chi, \bm{u}),  (\chi^\tau, (\bm{u} + \tau \bm{v} )^\tau) \big)}\\ 
		&\geq 
		\lim_{\tau \to 0} \frac{ \F(\mu, \chi, \bm{u}) - \F(\mu^\tau, \chi^\tau, (\bm{u} + \tau \bm{v})^\tau )}{\tau}
		-\lim_{\tau \to 0}  \frac{1}{2} \Bigg( \norm{\frac{ \chi^\tau - \chi}{\tau}}_{H^{-1}_{(0)}}^2  + \norm{\frac{  (\bm{u} + \tau \bm{v})^\tau- \bm{u} }{\tau}}^2_{\bm{H}^1_{\chi}}\Bigg) \\ 
		&= \delta \F (\mu, \chi, \bm{u}) [\bm{B}, \bm{v}] 
		- \frac{1}{2} \norm{\bm{B}\cdot  \nabla \chi}_{H^{-1}_{(0)}}^2 - \frac{1}{2}\norm{\bm{v} - \nabla \bm{u}\bm{B}}_{\bm{H}^1_{\chi}}^2. 
	\end{align*}
	Taking the supremum over $\bm{B}$ and $\bm{v}$ therefore yields the assertion. 
\end{proof}

\subsection{BV solutions} \label{sec:BC_solutions}
Instead of working with the weak time derivative $\pt \chi$ and a potential $w$, we now choose to only use a single potential $v$, for which we define the approximation $v^h \in H^1_{(0)}(\Omega)$ as the unique solution to
\begin{equation}\label{eq:potential_bv}
	\left\{ 
	\begin{alignedat}{2}
		- \Delta v^h (t) &= - \tfrac{\chi^h \big( (\lfloor \tfrac{t}{h} \rfloor +1)  h \big) -  \chi^h(\lfloor \tfrac{t}{h} \rfloor   h)  }{h}  \quad &&\textnormal{in } \Omega, \\ 
		\partial_{\bm{n}} v  &= 0 \quad &&\textnormal{on }\partial \Omega. 
	\end{alignedat}	
	\right. 
\end{equation}
In particular, we see that 
$-\pt \chi^h(t) = -\Delta v^h$.
Moreover, for all $\xi \in C^\infty([0, T] \times \overline{\Omega})$ with $\xi(T) = 0$, we obtain by integrating over $\Omega_{T_*}$
\begin{align*}
	\int_0^T \int_\Omega \nabla v^h \cdot \nabla \xi(t) \dx \dt 
	&= \int_0^T \int_\Omega \hat{\chi}^h(t) \partial_t \xi(t) \dx \dt
	+ \int_\Omega \hat{\chi}^h(0) \xi(0) \dx, 
	\numberthis\label{eq:pt_bv_appxo}
\end{align*}
where we used integration by parts on the intervals $[kh, (k+1)h]$ and telescoping for the boundary conditions. \\ 
From the dissipation inequality \eqref{eq:a_piori_trivial} and due to the identity \eqref{eq:potential_bv}, we obtain the estimate 
\begin{align*}
	\norm{ \abs{\nabla \chi^h}(\Omega) }_{L^\infty} 
	+ \norm{ {\bm{u}}^h }_{L^\infty(\bm{H}^2)}
	+ 	\norm{\pt \hat{\chi}^h}_{L^2(H^{-1}_{(0)})}
	+ \norm{ \nabla v }_{L^2(\bm{L}^2)}
	+  \norm{ \pt \hat{\bm{u}}^h}^2_{L^2(\bm{H}^1) } 
	\leq C,
	\numberthis
	\label{est:a_priori_mm_bv}
\end{align*}
where $C < \infty$ is a constant that does not depend on $h > 0$. Recall that we assumed $\cos \alpha = 0$ and therefore see no contribution from the capillary term.\\ 
Obviously, the validity of \eqref{el_elastic_approximate} is unchanged by this definition, but instead of \eqref{el_w_appoximate} we have 
\begin{align*}
	\delta_{\tilde{\chi}} &\F_{per} ({\chi}^h) [\bm{B}] \numberthis \label{eq:potential_approximate_bv} \\ 
	= &\int_\Omega {\chi}^h \  \div \big( v^h \bm{B} \big) \dx  
	+ \int_\Omega \C_{\nu}(\chi^h_{\lfloor \sfrac{t}{h} \rfloor })  \E( \pt \hat{\bm{u}}^h )
	\colon \nabla^2 {\bm{u}}^h \bm{B}  \dx 
	- \int_\Omega \tfrac{1}{2} \abs{\nabla^2{ \bm{u}}^h}^2\  \div \bm{B} \dx \\ 
	&- \int_\Omega \Big[ W({\chi}^h, \E({\bm{u}}^h)) \bm{I} -  ( \nabla {\bm{u} } )^T \big( \C_{\nu}(\chi^h_{\lfloor \sfrac{t}{h} \rfloor })   \E( \pt \hat{\bm{u}}^h ) +  W_{, \E} ({\chi}^h, \E({\bm{u}}^h)  \big)\Big] \colon \nabla \bm{B} \dx 
	\\ 
	&+\int_\Omega
	(\nabla^2 \bm u^h)_{ijk}\,(\nabla \bm u^h)_{ip}\,(\nabla^2 \bm B)_{pjk}
	\dx
	+\int_\Omega
	(\nabla^2 \bm u^h)_{ijk}
	\Big(
	(\nabla^2 \bm u^h)_{ipk}\,(\nabla \bm B)_{jp}
	+
	(\nabla^2 \bm u^h)_{ijp}\,(\nabla \bm B)_{kp}
	\Big)
	\dx
\end{align*}
for all $\bm{B} \in \mathcal{S}_{{\chi}^h}$. 
Here the first variation of the perimeter is given by, \cite{sturzenhecker, roeger, garcke_00}, 
\begin{equation}\label{eq:curvature_bv}
	\delta_{\tilde{\chi}} \F_{per} ({\chi}^h) [\bm{B}] 
	= 
	\int_\Omega \Big( \nabla \cdot \bm{B} - \tfrac{\nabla \chi^h}{\abs{\nabla \chi^h}} \cdot  \nabla \bm{B} \tfrac{\nabla \chi^h}{\abs{\nabla \chi^h}} \Big) \, d\abs{\nabla \chi^h} . 
\end{equation}
Following the proof of \eqref{eq:langrange_mult_sg}, the same identity holds even for all $\bm{B} \in \bm{C}^2(\overline{\Omega})$ with $\bm{B} \cdot \bm{n}_{\partial \Omega}$ if we replace $v^h$ by $\widetilde{v}^h = v^h + \lambda^h$, where 
\begin{align*}
	-\lambda^h &\int_\Omega {\chi}^h \nabla \cdot \bm{\xi} \dx \\
	=& -\int_\Omega  {\chi}^h \nabla \cdot (v^h \bm{\xi}) \dx 
	- \int_\Omega \C_{\nu}(\chi^h_{\lfloor \sfrac{t}{h} \rfloor })  \E( \pt \hat{\bm{u}}^h )
	\colon \nabla^2 {\bm{u}}^h \bm{\xi}  \dx 
	+ \int_\Omega \tfrac{1}{2} \abs{\nabla^2{ \bm{u}}^h}^2\  \div \bm{\xi} \dx \\ 
	&+ \int_\Omega \Big[ W({\chi}^h, \E({\bm{u}}^h)) \bm{I} -  ( \nabla {\bm{u} } )^T \big( \C_{\nu}(\chi^h_{\lfloor \sfrac{t}{h} \rfloor })   \E( \pt \hat{\bm{u}}^h )
	+  W_{, \E} ({\chi}^h, \E({\bm{u}}^h)  \big)\Big] \colon \nabla \bm{\xi} \dx 
	\\ 
	&-\int_\Omega
	(\nabla^2 \bm u^h)_{ijk}\,(\nabla \bm u^h)_{ip}\,(\nabla^2 \bm \xi)_{pjk}
	\dx
	-\int_\Omega
	(\nabla^2 \bm u^h)_{ijk}
	\Big(
	(\nabla^2 \bm u^h)_{ipk}\,(\nabla \bm \xi)_{jp}
	+
	(\nabla^2 \bm u^h)_{ijp}\,(\nabla \bm \xi)_{kp}
	\Big)
	\dx \\ 
	&-\delta_{\tilde{\chi}} \F_{per} ({\chi}^h) [\bm{\xi}] ,
\end{align*}
with $\bm{\xi} \in \bm{C}^{\infty}(\overline{\Omega})$ satisfying $\bm{\xi} \cdot \bm{n}_{\partial \Omega}  = 0$ and  
\begin{equation*}
	\norm{\bm{\xi}}_{C^2} \leq C(\Omega, m_0, \abs{\Gamma}), 
	\quad \textnormal{and} \quad 
	\int_\Omega \chi \nabla \cdot \bm{\xi} \dx  \geq C(m_0, \Omega). 
\end{equation*}
In particular, we obtain the estimate 
\begin{equation}\label{estimate:lambda_implicit_bv}
	\abs{\lambda^h} 
	\leq C\big(\Omega, m_0, \abs{\nabla \chi^h} (\Omega)\big)
	\Big(\norm{\nabla v^h}_{L^1} 
	+ \norm{\pt \hat{\bm{u}}^h}_{\bm{H}^1}   \norm{{\bm{u}}^h}_{\bm{H}^2}+ 
	\norm{{\bm{u}}^h}_{\bm{H}^2}^2 
	+ \abs{\nabla \chi^h} (\Omega)
	\Big), 
\end{equation}
and $\lambda^h$ is uniformly bounded in $L^2(0, T_*; L^2(\Omega))$ due to \eqref{est:a_priori_mm_bv}. \\
Observe that the relevant compactness results, cf.\ \eqref{eq:chi_conv_strong}, \eqref{eq:u_conv_strong}, from prior sections are still applicable, i.e.,
\begin{alignat*}{2}
	\chi^h, \hat{\chi}^h &\to \chi \quad &&\textnormal{in } L^2(0, T_{*}; L^2(\Omega)),
	\\
	\nabla\chi^h
	&\xrightharpoonup{*}\nabla\chi
	\quad &&\textnormal{in } L^\infty(0,T_*; \bm{M}(\Omega)), \\ 
	v^hh + \lambda^h&\to v \coloneqq \tilde{v} + \lambda \qquad && \textnormal{in } L^2(0, T_*, H^1(\Omega)), \\ 
	\bm{u}^h, \hat{\bm{u}}^h &\to \bm{u} \quad 
	&&\textnormal{in } 
	L^2(0, T_*; \bm{H}^1_{\perp}(\Omega)), \\ 
	{\bm{u}}^h &\to \bm{u} \quad &&\textnormal{in } L^2(0, T_*; \bm{H}^2(\Omega)), \\ 
	\pt \hat{\bm{u}}^h &\xrightharpoonup{*}  \pt \bm{u} \quad &&\textnormal{in } L^2(0, T_*; \bm{H}^1_{\perp}(\Omega)), 
\end{alignat*}
where the strong convergence of $\bm{u}$ in $L^2(0, T; \bm{H}^2(\Omega))$ follows analogously to the arguments in \eqref{eq:u_conv_H2}. 
Passing to the limit in \eqref{eq:pt_bv_appxo}, \eqref{el_elastic_approximate} and the right-hand side of \eqref{eq:potential_approximate_bv} is immediately possible without any additional compactness, but \eqref{eq:curvature_bv} requires some extra work.\\ 
Exploiting the strong convergence of $\chi^h(t)$ in $L^2(\Omega)$ a.e.\  in $(0, T_*)$ and the uniform bound on $\abs{\nabla \chi^h(t)} (\Omega)$, we find that 
\begin{equation}\label{conv:nabla_chi_poitwise}
	\nabla \chi^h(t) \xrightharpoonup{*} \nabla \chi (t) \quad  \textnormal{in } \bm{M}(\Omega) \ \textnormal{a.e.\ in } (0, T_*). 
\end{equation}
\par 
\medskip \noindent
Starting from here, we will work under the additional assumption known as \textit{energy convergence hypothesis}:
\begin{equation*}
	\underset{h \searrow 0}{\textnormal{liminf }} \int_0^{T_*} \int_\Omega d\abs{\nabla \chi^h}(t)  \dt 
	\leq 
	\int_0^{T_*}\int_\Omega d\abs{\nabla \chi}(t)  \dt. 
\end{equation*}
Weak lower semicontinuity, Fatou's lemma \cite[Thm.\ 1.20]{ambrosio2000functions}, and the convergence \eqref{conv:nabla_chi_poitwise} then immediately imply 
\begin{equation}\label{eq:perimeter_no_loss}
	\lim_{h \searrow 0} \int_\Omega d\abs{\nabla \chi^h(t)} \to  
	\int_\Omega d\abs{\nabla \chi(t)} 
	\quad \textnormal{a.e.\ in } (0, T_*), 
\end{equation} 
i.e., there is no loss of perimeter in the limit. \\  
\par 
\medskip \noindent
\textit{Claim:} For almost all $t \in (0, T_*)$ it holds that 
\begin{equation}\label{eq:nabla_chi_mass_measure}
	\abs{\nabla \chi^h(t)} \xrightharpoonup{*} \abs{\nabla \chi(t)} \quad \textnormal{in } \bm{M}(\Omega). 
\end{equation}
\noindent 
\textit{Proof of claim:} Following Garcke and Sturzenhecker \cite{sturzenhecker_garcke}, one obtains from \eqref{eq:perimeter_no_loss} and lower semincontinuity of $\abs{\nabla \chi^h}$ for all compact subsets $K \subset \Omega$ that 
\begin{align*}
	\underset{h \searrow 0}{\textnormal{limsup }} \int_{\Omega \cap K} d\abs{\nabla \chi^h(t) } 
	&= \underset{h \searrow 0}{\textnormal{limsup }} \Big( \int_{\Omega} d\abs{\nabla \chi^h(t) }  - \int_{ K} d\abs{\nabla \chi^h(t) }  \Big)\\ 
	&\leq  \int_{\Omega} d\abs{\nabla \chi(t) }  - \int_{ K} d\abs{\nabla \chi(t) }  
	= \int_{\Omega \cap K} d\abs{\nabla \chi(t) } . 
\end{align*}
Since weak convergence of measures in characterized by lower semincontinuity on open sets and upper semincontinuity on compact sets, see \cite[Thm.\ 1.40]{evans2015measure}, this entails that
\begin{equation*}
	\int_\Omega \xi \, d\abs{\nabla \chi^h(t)} \to \int_\Omega \xi \, d\abs{\nabla \chi(t)}
\end{equation*}
for all $\xi \in C^0(\Omega)$ and almost every $t \in (0, T_*)$. 
\hfill $\diamondsuit$
\\ 
\par \medskip \noindent
Using this, we can find approximations of the normal by an argument similar to the proof of Reshetnyak's Lemma \cite{reshetnyak1968weak, Bronsard_Garcke}. 
\par 
\medskip \noindent
\textit{Claim:} For almost all $t \in (0, T_*)$ and all $\eps > 0$ there exists some $\bm{g}_\eps \in \bm{C}^\infty_c(\Omega)$ with $\norm{\bm{g}_\eps} \leq 1$ and $C > 0$ independent of $h >0$ such that 
\begin{equation}\label{eq:g_approx}
	\underset{h \searrow 0}{\textnormal{limsup }}  \int_\Omega \Big| \bm{g}_\eps - \tfrac{\nabla \chi_h(t)}{\abs{\nabla \chi^h(t)}} \Big| \, d\abs{\nabla \chi^h} 
	\leq C \eps^{\sfrac{1}{2}}. 
\end{equation}
\noindent 
\textit{Proof of claim:} For the sake of brevity, let us neglect the dependence of $\chi, \chi^h$ on $t \in (0, T_*)$. By definition of $BV$-functions there exists for all $\eps > 0$ some $\bm{g}_\eps$ with $\norm{\bm{g}_\eps} \leq 1$ such that 
\begin{equation*}
	\int_\Omega \bm{g}_{\eps} \cdot d\nabla \chi = - \int_\Omega  \chi \, \nabla \cdot \bm{g}_\eps \dx 
	\geq \int_\Omega d\abs{\nabla \chi} - \eps, 
\end{equation*}
and using the weak$^*$ convergence of $\nabla \chi^h$ , along with the fact that there is no loss of perimeter in the limit, it follows that 
\begin{equation*}
	\lim_{h \searrow 0} \Big( \int_\Omega d\abs{\nabla \chi^h} - \int_\Omega \bm{g}_\eps \cdot d \nabla \chi^h \Big)
	=  \int_\Omega d\abs{\nabla \chi} - \int_\Omega \bm{g}_\eps \cdot d \nabla \chi 
	\leq \eps . 
\end{equation*}
Hence, 
\begin{equation}\label{eq:estimate_app_normal}
	\lim_{h \searrow 0} \int_\Omega \big( 1 - \bm{g}_\eps \cdot  \tfrac{\nabla \chi^h}{\abs{\nabla \chi^h}}  \big) \, d\abs{\nabla \chi^h}
	= \lim_{h \searrow 0} \Big( \int_\Omega d\abs{\nabla \chi^h} - \int_\Omega \bm{g}_\eps \cdot d \nabla \chi^h \Big) 
	\leq \eps . 
\end{equation}
On the reduced boundary $\partial^*(\{\chi \equiv 1\})$ the fact that $\norm{\bm{g}_\eps} \leq 1$ implies 
\begin{equation*}
	\Big| \bm{g}_\eps - \tfrac{\nabla \chi_h(t)}{\abs{\nabla \chi^h(t)}} \Big|^2
	=
	\bm{g}_\eps^2 - 2 \bm{g}_\eps \cdot \tfrac{\nabla \chi_h(t)}{\abs{\nabla \chi^h(t)}} + 1 
	\leq 2 \Big( 1-  \bm{g}_\eps \cdot  \tfrac{\nabla \chi_h(t)}{\abs{\nabla \chi^h(t)}}  \Big). 
\end{equation*}
Employing Hölder's inequality, it follows 
\begin{align*}
	\int_\Omega \Big| \bm{g}_\eps - \tfrac{\nabla \chi_h(t)}{\abs{\nabla \chi^h(t)}} \Big| \, d\abs{\nabla \chi^h}
	&\leq 
	\Big(  \int_\Omega \Big| \bm{g}_\eps - \tfrac{\nabla \chi_h(t)}{\abs{\nabla \chi^h(t)}} \Big|^2 \, d\abs{\nabla \chi^h} \Big)^{\sfrac{1}{2}} 
	\Big( \int_\Omega  d\abs{\nabla \chi^h} \Big)^{\sfrac{1}{2}}\\ 
	&\leq 
	C \Big(  \int_\Omega \big( 1 - \bm{g}_\eps \cdot \tfrac{\nabla \chi^h}{\abs{\nabla \chi^h}}  \big) \, d\abs{\nabla \chi^h}\Big)^{\sfrac{1}{2}}  . 
\end{align*}
The assertion thus follows from \eqref{eq:estimate_app_normal}. 
\hfill $\diamondsuit$
\par 
\medskip 
Finally, we are in the position to conclude convergence for the $BV$-formulation of the mean curvature, i.e., the first variation of the perimeter \eqref{eq:curvature_bv}. In fact, we have 
{\allowdisplaybreaks
	\begin{align*}
		&\Big| \int_\Omega \tfrac{\nabla \chi^h}{\abs{\nabla \chi^h}} \cdot  \nabla \bm{B} \tfrac{\nabla \chi^h}{\abs{\nabla \chi^h}}  \, d\abs{\nabla \chi^h} 
		-
		\int_\Omega  \tfrac{\nabla \chi}{\abs{\nabla \chi}} \cdot  \nabla \bm{B} \tfrac{\nabla \chi}{\abs{\nabla \chi}}  \, d\abs{\nabla \chi} \Big| \\ 
		&\quad= \begin{aligned}[t]
			\Big|
			&\int_\Omega \big(  \tfrac{\nabla \chi^h}{\abs{\nabla \chi^h}} - \bm{g}_\eps \big) \cdot \nabla \bm{B} \big(  \tfrac{\nabla \chi^h}{\abs{\nabla \chi^h}} + \bm{g}_\eps \big)\, d  \abs{\nabla \chi^h}
			+ \int_\Omega \bm{g}_\eps \cdot \nabla \bm{B} \bm{g}_\eps \, d\big( \abs{\nabla \chi^h} - \abs{\nabla \chi} \big)
			\\ 
			&  
			+ \int_\Omega \bm{g}_\eps \cdot \nabla \bm{B}  \tfrac{\nabla \chi^h}{\abs{\nabla \chi^h}} -  \tfrac{\nabla \chi^h}{\abs{\nabla \chi^h}} \cdot \nabla \bm{B} \bm{g}_\eps \, d \abs{\nabla \chi^h}
			-  \int_\Omega \bm{g}_\eps \cdot \nabla \bm{B}  \tfrac{\nabla \chi}{\abs{\nabla \chi}} -  \tfrac{\nabla \chi}{\abs{\nabla \chi}} \cdot \nabla \bm{B} \bm{g}_\eps \, d \abs{\nabla \chi}\\ 
			&+\int_\Omega \big( \bm{g}_\eps - \tfrac{\nabla \chi}{\abs{\nabla \chi}}\big) \cdot \nabla \bm{B} \big(\tfrac{\nabla \chi}{\abs{\nabla \chi}}  + \bm{g}_\eps \big) \, d\abs{\nabla \chi}
			\Big|
		\end{aligned}\\
		& \quad \leq \begin{aligned}[t]
			C &\int_\Omega \big|  \bm{g}_\eps -  \tfrac{\nabla \chi^h}{\abs{\nabla \chi^h}} \big| \, d  \abs{\nabla \chi^h}
			+\int_\Omega \bm{g}_\eps \cdot \nabla \bm{B} \bm{g}_\eps \, d\big( \abs{\nabla \chi^h} - \abs{\nabla \chi} \big) 
			+ C \int_\Omega \big| \bm{g}_\eps - \tfrac{\nabla \chi}{\abs{\nabla \chi}}\big|  \, d\abs{\nabla \chi} \\
			&+ \Big|
			\int_\Omega \bm{g}_\eps \cdot \nabla \bm{B}  \tfrac{\nabla \chi^h}{\abs{\nabla \chi^h}}  \, d \abs{\nabla \chi^h}
			-  \int_\Omega \bm{g}_\eps \cdot \nabla \bm{B}  \tfrac{\nabla \chi}{\abs{\nabla \chi}} \, d \abs{\nabla \chi}
			\Big| \\
			&+ \Big| 
			\int_\Omega  \tfrac{\nabla \chi}{\abs{\nabla \chi}} \cdot \nabla \bm{B} \bm{g}_\eps \, d \abs{\nabla \chi}
			- \int_\Omega  \tfrac{\nabla \chi^h}{\abs{\nabla \chi^h}} \cdot \nabla  \bm{B} \bm{g}_\eps \, d \abs{\nabla \chi^h}
			\Big|
		\end{aligned}
	\end{align*}
}where we applied the Cauchy--Schwarz inequality in the second step. Due the \eqref{conv:nabla_chi_poitwise} and  \eqref{eq:nabla_chi_mass_measure}, combined with the approximating property of $\bm{g}_\eps$ and \eqref{eq:g_approx}, we see that the right-hand side vanishes for $\eps, h \searrow 0$. Thus, we may also pass to the limit in \eqref{eq:curvature_bv}. 
\par 
\medskip 
Lastly, we remark that the asserted energy dissipation inequality is a simple consequence of \eqref{eq:a_piori_trivial} combined with the arguments in the varifold case.

	\par 
	\medskip 

	\paragraph{\textbf{Acknowledgments}} 
	The authors thank Tim Laux and Kerrek Stinson for helpful discussions. 
	The last author is supported by the Graduiertenkolleg 2339 IntComSin of the Deutsche Forschungsgemeinschaft (DFG, German Research Foundation) – Project-ID 321821685. The support is gratefully acknowledged. 
	\par 
	\medskip 
	\paragraph{\textbf{Conflict of interests and data availability statement}} There is no conflict of interests.
	There is no associated data to the manuscript.

	\bibliographystyle{siam}
	\small
	\bibliography{refs}
	
	\appendix

\end{document}